\newtheorem{theorem}[equation]{Theorem}
\newtheorem{proposition}[equation]{Proposition}
\newtheorem{lemma}[equation]{Lemma}
\newtheorem{corollary}[equation]{Corollary}
\theoremstyle{definition}
\newtheorem{definition}[equation]{Definition}
\newtheorem{remark}[equation]{Remark}
\newtheorem{notation}[equation]{Notation}
\newtheorem{example}[equation]{Example}
\theoremstyle{plain}
\newcommand{\disp}{\displaystyle}
\newcommand{\Spec}{\mathop{\mathrm{Spec}}\nolimits}
\newcommand{\supp}{\mathop{\mathrm{supp}}\nolimits}
\newcommand{\cosupp}{\mathop{\mathrm{cosupp}}\nolimits}
\newcommand{\id}{\mathop{\mathrm{id}}\nolimits}
\newcommand{\Ker}{\mathop{\mathrm{Ker}}\nolimits}
\newcommand{\Image}{\mathop{\mathrm{Im}}\nolimits}
\newcommand{\Ext}{\mathop{\mathrm{Ext}}\nolimits}
\newcommand{\Hom}{\mathop{\mathrm{Hom}}\nolimits}
\newcommand{\RHom}{\mathop{\mathrm{RHom}}\nolimits}
\newcommand{\RGamma}{\mathop{\mathrm{R\Gamma}}\nolimits}
\newcommand{\LLambda}{\mathop{\mathrm{L\Lambda}}\nolimits}
\newcommand{\Mod}{\mathop{\mathrm{Mod}}\nolimits}
\newcommand{\tot}{\mathop{\mathrm{tot}}\nolimits}
\newcommand{\Lotimes}{\mathop{\otimes_R^{\rm L}}\nolimits}
\newcommand{\fa}{\mathfrak{a}}
\newcommand{\fp}{\mathfrak{p}}
\newcommand{\fq}{\mathfrak{q}}
\newcommand{\fm}{\mathfrak{m}}
\newcommand{\cD}{\mathcal{D}}
\newcommand{\cL}{\mathcal{L}}
\newcommand{\cC}{\mathcal{C}}
\newcommand{\cT}{\mathcal{T}}
\newcommand{\Z}{\mathbb{Z}}
\begin{document}
\title[Localization functors and cosupport]{Localization functors and cosupport in derived categories of commutative Noetherian rings}
\author[T. Nakamura]{Tsutomu Nakamura}
\address[T. Nakamura]{Graduate School of Natural Science and Technology, Okayama University,
Okayama, 700-8530 Japan}
\email{t.nakamura@s.okayama-u.ac.jp}
\author[Y. Yoshino]{Yuji Yoshino}
\address[Y. Yoshino]{Graduate School of Natural Science and Technology, Okayama University,
Okayama, 700-8530 Japan}
\email{yoshino@math.okayama-u.ac.jp}
\subjclass[2010]{13D09, 13D45, 55P60}
\keywords{colocalizing subcategory, cosupport, local homology}
\begin{abstract} Let $R$ be a commutative Noetherian ring.
We introduce the notion of localization functors  $\lambda^W$ with cosupports in arbitrary subsets $W$ of $\Spec R$; it is a common generalization of  localizations with respect to multiplicatively closed subsets and left derived functors of ideal-adic completion functors.
We prove several results about the localization functors $\lambda^W$, including an explicit way to calculate $\lambda^W$ by the notion of \v{C}ech complexes.
As an application, we can give a simpler proof of a classical theorem by Gruson and Raynaud, which states that the projective dimension of a flat $R$-module is at most the Krull dimension of $R$.
As another application, it is possible to give a functorial way to replace complexes of flat $R$-modules or  complexes of finitely generated $R$-modules by complexes of pure-injective $R$-modules.
\end{abstract}

\maketitle
\tableofcontents
\section{Introduction}
\label{Introduction}
Throughout this paper, we assume that $R$ is a commutative Noetherian ring.
We denote by $\cD=D(\Mod R)$ the derived category of all complexes of $R$-modules, by which we mean that $\cD$ is the unbounded derived category.
For a triangulated subcategory $\cT$ of $\cD$, its left (resp. right) orthogonal subcategory is defined as ${}^\perp\cT=\Set{X\in \cD | \Hom_\cD(X,\cT)=0}$
 (resp. $\cT^\perp=\Set{Y\in \cD | \Hom_\cD(\cT,Y)=0}$).
Moreover, $\cT$ is called localizing (resp. colocalizing) if $\cT$ is closed under arbitrary direct sums (resp. direct products).

Recall that the support of a complex $X\in \cD$ is defined as follows;
\begin{align*}
\supp X&= \Set{ \fp \in \Spec R | X\Lotimes \kappa(\fp)\neq 0},
\end{align*}
where $\kappa(\fp)=R_\fp/\fp R_\fp$. We write  
$\cL_W=\Set{X\in \cD | \supp X\subseteq W }$ for a subset $W$ of $\Spec R$. 
Then $\cL_W$ is a localizing subcategory of $\cD$.
Neeman \cite{N} proved that any localizing subcategory of $\cD$ is obtained in this way.
The localization theory of triangulated categories \cite{K} yields a couple of adjoint pairs $(i_W, \gamma_W)$ and $(\lambda_W, j_W)$ as it is indicated in the following diagram:
\begin{align}\text{
$
\xymatrix{
\cL_W\ \ar@<3.5pt>[rr]^*{i_W}&&\ar@<3.5pt>[ll]^*{\gamma_W}\ \cD\ \ar@<3.5pt>[rr]^*{\lambda_W}&&\ar@<3.5pt>[ll]^*{j_W}\ \cL_W^\perp}
$}\label{adjoint diagram 1}
\end{align}
Here, $i_W$ and $j_W$ are the inclusion functors $\cL_W\hookrightarrow \cD$ and $\cL_W^\perp\hookrightarrow \cD$ respectively.
In the precedent paper \cite{NY}, the authors introduced the colocalization functor with support in $W$ as the functor $\gamma_W$.
If $V$ is a specialization-closed subset of $\Spec R$, then $\gamma_V$ coincides with the right derived functor $\RGamma_V$ of the section functor $\Gamma_V$ with support in $V$; it induces the local cohomology functors $H^i_V(-)=H^i(\RGamma_V(-))$.
In {\it loc.\ cit.}, we established some methods to compute $\gamma_W$ for general subsets $W$ of $\Spec R$.
Furthermore, the local duality theorem and Grothendieck type vanishing theorem of local cohomology were  extended to the case of  $\gamma_W$.

On the other hand, in this paper, we introduce the notion of localizations functors with cosupports in arbitrary subsets $W$ of $\Spec R$.
Recall that the cosupport of a complex $X\in \cD$ is defined as follows;
$$\cosupp X=\Set{\fp\in \Spec R | \RHom_R(\kappa(\fp),X)\neq 0}.$$
We write $\cC^W=\Set{X\in \cD | \cosupp X\subseteq W }$ for a subset $W$ of $\Spec R$. 
Then $\cC^W$ is a colocalizing subcategory of $\cD$.
Neeman \cite{N3} proved that any colocalizing subcategory of $\cD$ is obtained in this way.\footnotemark{}
\footnotetext{This result is not needed in this work.}

We remark that there are equalities
\begin{align}
{}^\perp\cC^{W}=\cL_{W^c},\ \ \cC^{W}=\cL_{W^c}^\perp
,
\label{star}
\end{align}
where $W^c=\Spec R\backslash W$.
The second equality follows from Neeman's theorem  \cite[Theorem 2.8]{N}, which states that $\cL_{W^c}$ is equal to the smallest  localizing subcategory of $\cD$ containing the set $\Set{\kappa(\fp)|\fp\in W^c}$.
Then it is seen that the first one holds, since ${}{}^\perp(\cL_{W^c}^\perp)=\cL_{W^c}$ (cf. \cite[\S 4.9]{K}).

Now we write $\lambda^W=\lambda_{W^c}$ and $j^W=j_{W^c}$.
By (\ref{adjoint diagram 1}) and (\ref{star}), there is a diagram of adjoint pairs: 
$$
\xymatrix{
{}^\perp\cC^{W}=\cL_{W^c}\ \ar@<3.5pt>[rr]^*{\hspace{10mm}i_{W^c}}&&\ar@<3.5pt>[ll]^*{\hspace{10mm}\gamma_{W^c}}\ \cD\ \ar@<3.5pt>[rr]^*{\hspace{-4.8mm}\lambda^{W}}&&\ar@<3.5pt>[ll]^*{\hspace{-5.1mm}j^W}\ \cC^{W}=\cL_{W^c}^\perp
}
$$
We call $\lambda^W$ {\it the localization functor with cosupport in $W$}.

For a multiplicatively closed subset $S$ of $R$, the localization functor $\lambda^{U_S}$ with cosupport in $U_S$ is nothing but 
$(-)\otimes_RS^{-1}R$, where $U_S=\Set{\fp\in \Spec R | \fp\cap S=\emptyset}$.
Moreover, for an ideal $\fa$ of $R$, the localization functor $\lambda^{V(\fa)}$ with cosupport in $V(\fa)$ is isomorphic to the left derived functor $\LLambda^{V(\fa)}$ of the $\fa$-adic completion functor $\Lambda^{V(\fa)}=\varprojlim(-\otimes_R R/\fa^n)$ defined on $\Mod R$. See Section 2 for details.\medskip

In this paper, we establish several results about the localization functor $\lambda^W$ with cosupport in a general subset $W$ of $\Spec R$.

In Section 3, we prove that $\lambda^W$ is isomorphic to $\prod_{\fp\in W}\LLambda^{V(\fp)}(-\otimes_RR_\fp)$ if  there is no inclusion relation between two distinct prime ideals in $W$.
Furthermore, we give a method to compute $\lambda^W$ for a general subset $W$.
We write $\eta^{W}:\id_\cD\to \lambda^{W}(=j^W\lambda^W)$ for the natural morphism given by the adjointness of $(\lambda^W, j^W)$.
In addition, note that when $W_0\subseteq W$,  there is a morphism $\eta^{W_0}\lambda^W:\lambda^W\to\lambda^{W_0}\lambda^{W}\cong \lambda^{W_0}$.
The following theorem is one of the main results of this paper.

\begin{theorem}[{Theorem \ref{main result 1}}]\label{method for induction 1 intro}
 Let $W$, $W_0$ and $W_1$  be subsets of $\Spec R$ with $W=W_0\cup W_1$.
We denote by $\overline{W_0}^s$ (resp. $\overline{W_1}^g$) is the specialization (resp. generalization) closure of $W$.
Suppose that one of the following conditions holds:
\begin{itemize}
\item[\rm (1)] $W_0=\overline{W_0}^s\cap W$;
\item[\rm (2)] $W_1=W\cap \overline{W_1}^g$.
\end{itemize}
Then, for any $X\in \cD$, there is a triangle
 $$\begin{CD}
\lambda^{W} X@>f>>\lambda^{W_1}X\oplus\lambda^{W_0} X @>g>>  \lambda^{W_1}\lambda^{W_0} X @>>> \lambda^{W} X[1],
\end{CD}$$
where 
$$f=\left(\begin{array}{c}
\eta^{W_1}\lambda^{W}X\\[7pt]
\eta^{W_0}\lambda^WX
\end{array}\right),\ \  
g=\left(\begin{array}{cc}\lambda^{W_1}\eta^{W_0}X\hspace{10pt}
(-1)\cdot\eta^{W_1}\lambda^{W_0}X
\end{array}\right).$$
\end{theorem}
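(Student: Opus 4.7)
The plan is to complete $f$ to a distinguished triangle and then identify its third term with $\lambda^{W_1}\lambda^{W_0}X$ through $g$; condition~(1) or~(2) enters only in this last identification. Since $W_i\subseteq W$ forces $\cC^{W_i}\subseteq \cC^W$, the fiber $\gamma_{W^c}X$ of $\eta^WX$ has support in $W^c\subseteq W_i^c$, so $\lambda^{W_i}\gamma_{W^c}X=0$; applying $\lambda^{W_i}$ to the triangle $\gamma_{W^c}X\to X\to \lambda^WX$ therefore gives $\lambda^{W_i}\lambda^WX\cong \lambda^{W_i}X$ for $i=0,1$. Under these identifications the entries of $f$ and $g$ are morphisms to $\lambda^{W_i}X$ and $\lambda^{W_1}\lambda^{W_0}X$ respectively, as claimed. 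The vanishing $gf=0$ is then formal: naturality of $\eta^{W_1}$ applied to $\eta^{W_0}\lambda^WX$ yields
\[
\lambda^{W_1}\eta^{W_0}X\circ \eta^{W_1}\lambda^WX=\eta^{W_1}\lambda^{W_0}X\circ \eta^{W_0}\lambda^WX,
\]
so the sign in the second entry of $g$ forces $gf=0$.

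Completing $f$ to a distinguished triangle
\[
\lambda^WX\xrightarrow{f}\lambda^{W_1}X\oplus \lambda^{W_0}X\xrightarrow{h}C\to \lambda^WX[1],
\]
the relation $gf=0$ produces a morphism $\phi\colon C\to \lambda^{W_1}\lambda^{W_0}X$ with $\phi h=g$; it remains to show that $\phi$ is an isomorphism. My strategy is to apply the octahedral axiom twice. First, applied to the composite $X\to \lambda^WX\to \lambda^{W_0}X$, which equals $\eta^{W_0}X$ by naturality (and similarly for $W_1$), it rewrites the cone of $\lambda^WX\to \lambda^{W_i}X$ in terms of the cone of the natural map $\gamma_{W^c}X\to \gamma_{W_i^c}X$. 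Second, applied to $X\to \lambda^{W_0}X\to \lambda^{W_1}\lambda^{W_0}X$ it identifies the cone of $\lambda^{W_1}X\to \lambda^{W_1}\lambda^{W_0}X$ with $\lambda^{W_1}\gamma_{W_0^c}X[1]$. Combining these two octahedra, proving that $\phi$ is an isomorphism reduces to establishing a Mayer--Vietoris-type triangle among the $\gamma$-functors associated to $W^c$, $W_0^c$, and $W_1^c$.

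The main obstacle, and the only point where condition~(1) or~(2) really matters, is this Mayer--Vietoris identity for the $\gamma$-functors: without such a hypothesis there is no reason for $\lambda^{W_0}$ and $\lambda^{W_1}$ to commute in any useful sense, and the identity genuinely can fail. Conditions (1) and (2) say that $W_0$ (resp.~$W_1$) is saturated in $W$ under specialization (resp.~generalization), which is the correct geometric hypothesis for splitting $\lambda^W$ into contributions from $W_0$ and $W_1$. My plan here is to appeal to the explicit description of $\lambda^W$ established earlier in Section~3 --- the \v{C}ech-type complex together with the antichain formula $\lambda^W\cong \prod_{\fp\in W}\LLambda^{V(\fp)}(-\otimes_R R_\fp)$ --- and to check by direct computation that hypothesis~(1) or~(2) yields a compatible splitting of the \v{C}ech model, from which the desired triangle follows.
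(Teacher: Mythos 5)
Your first two steps---the identification $\lambda^{W_i}\lambda^W\cong\lambda^{W_i}$ and the verification that $gf=0$ via naturality of $\eta^{W_1}$---are correct and match Remark \ref{characterization perp} (ii) and Remark \ref{remark for main result 1} (ii). The heart of the proof, however, namely identifying the cone of $f$ with $\lambda^{W_1}\lambda^{W_0}X$, is not established. Reducing to a Mayer--Vietoris triangle for the $\gamma$-functors is not a simplification: that triangle (Theorem \ref{gamma pushout}) is proved in the paper by an argument parallel to, and of the same difficulty as, the one you are trying to give, not as a prior input. The proposed verification via a \v{C}ech complex is then circular: the \v{C}ech model for $\lambda^W$ with general $W$ (Proposition \ref{main result 2} and Corollary \ref{cor main result 2}) is a Section 7 result established by induction using precisely Theorem \ref{main result 1}. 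Before that theorem, only the antichain formula $\lambda^W\cong\prod_{\fp\in W}\LLambda^{V(\fp)}(-\otimes_R R_\fp)$ of Theorem \ref{lambda dim W=0} is available, and it requires $\dim W=0$. Finally, your proposal never invokes Corollary \ref{composition 2}, which under hypothesis (1) or (2) gives $\lambda^{W_0}\lambda^{W_1}\cong\lambda^{W_0\cap W_1}$; this is the single place where the hypothesis actually does work, so an argument that does not use it (or something equivalent to it) cannot be complete.

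The paper's proof takes a different and more economical route, with no octahedra. It completes $g$ rather than $f$ to a triangle $C\to\lambda^{W_1}X\oplus\lambda^{W_0}X\xrightarrow{g}\lambda^{W_1}\lambda^{W_0}X\to C[1]$, so that $C\in\cC^W$ automatically since $\cC^{W_0},\cC^{W_1}\subseteq\cC^W$. From $gf=0$ one obtains a comparison $b\colon\lambda^WX\to C$, and since both $\lambda^WX$ and $C$ lie in $\cC^W$, it suffices by Remark \ref{characterization perp} (i) to show that $\lambda^{W_0}b$ and $\lambda^{W_1}b$ are isomorphisms. Applying $\lambda^{W_1}$ to the sequence $\lambda^WX\to\lambda^{W_1}X\oplus\lambda^{W_0}X\to\lambda^{W_1}\lambda^{W_0}X$ yields a split triangle because $\lambda^{W_1}\lambda^W\cong\lambda^{W_1}$; applying $\lambda^{W_0}$ likewise yields a split triangle, this time because $\lambda^{W_0}\lambda^{W_1}\cong\lambda^{W_0\cap W_1}$ by Corollary \ref{composition 2}---the point where the hypothesis enters. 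Comparing with the split triangles obtained from the defining triangle of $C$ then shows each $\lambda^{W_i}b$ is an isomorphism. If you want to keep your framing by completing $f$, you would still need some substitute for Corollary \ref{composition 2} to identify the cone, and you would have to prove $C\in\cC^W$ by hand; starting from $g$ avoids both.
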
\vspace{2mm}

This theorem enables us to compute $\lambda^W$ by  using $\lambda^{W_0}$ and $\lambda^{W_1}$ for smaller subsets $W_0$ and $W_1$.
Furthermore, as long as we consider the derived category $\cD$, the theorem and Theorem \ref{gamma pushout} in Section 3 generalize Mayer-Vietoris triangles by Benson, Iyengar and Krause \cite[Theorem 7.5]{BIK}.

In Section 4, as an application, we give a simpler proof of a classical theorem due to Gruson and Raynaud. The theorem states that the projective dimension of a flat $R$-module is at most the Krull dimension of $R$.

Section 5 contains some basic facts about cotorsion flat $R$-modules.

Section 6 is devoted to study the cosupport of a complex $X$ consisting of cotorsion flat $R$-modules.
As a consequence, we can calculate $\gamma_{V^c}X$ and $\lambda^VX$ explicitly for a specialization-closed subset $V$ of $\Spec R$.

In Section 7, using Theorem \ref{method for induction 1 intro} above, we give a new way to get $\lambda^{W}$.
In fact, provided that $d=\dim R$ is finite, we are able to calculate $\lambda^W$ by a ${\rm \check{C}}$ech complex of functors of the following form;
$$\begin{CD}
\disp{\prod_{0\leq i\leq d}\bar{\lambda}^{W_{i}}}@>>>\disp{\prod_{0\leq i<j\leq d}\bar{\lambda}^{W_{j}}\bar{\lambda}^{W_{i}}}@>>>\cdots @>>>   \disp{\bar{\lambda}^{W_d}\cdots\bar{\lambda}^{W_0}},
\end{CD}$$
where $W_i=\{\fp\in W | \dim R/\fp=i \}$ and $\bar{\lambda}^{W_i}=\prod_{\fp\in W_i}\Lambda^{V(\fp)}(-\otimes_RR_\fp)$ for $0\leq i\leq d$.
This \v{C}ech complex sends a complex $X$ of $R$-module to a double complex in a natural way. We shall prove that $\lambda^WX$ is isomorphic to the total complex of the double complex if $X$ consists of  flat $R$-modules.

Section 8 treats commutativity of $\lambda^W$ with tensor products. Consequently, we show that $\lambda^WY$ can be computed by using the \v{C}ech complex above if $Y$ is a complex of finitely generated $R$-modules.

In Section 9, as an application, we give a functorial way to construct quasi-isomorphisms from complexes of  flat $R$-modules or complexes of finitely generated $R$-modules to complexes of pure-injective $R$-modules.\\

\vspace{10pt}
\noindent{\bf Acknowledgements.}
The first author is grateful to Srikanth Iyengar for his helpful comments and suggestions.
The second author was supported by JSPS Grant-in-Aid for Scientific Research 26287008.

\section{Localization functors}
\label{Flat }

In this section, we summarize some notions and basic facts used in the later sections.\medskip

We write $\Mod R$ for the category of all modules over a commutative Noetherian ring $R$.
For an ideal $\fa$ of $R$, $\Lambda^{V(\fa)}$ denotes the $\fa$-adic completion functor $\varprojlim (-\otimes_R R/\fa^n)$ defined on $\Mod R$.
Moreover, we also denote by ${M}^{\wedge}_\fa$ the $\fa$-adic completion $\Lambda^{V(\fa)}M=\varprojlim M/\fa^n M$ of an $R$-module $M$. 
If the natural map $M\rightarrow {M}^{\wedge}_\fa$ is an isomorphism, then $M$ is called $\fa$-adically complete.
In addition, when $R$ is a local ring with maximal ideal $\fm$, we simply write $\widehat{M}$ for the $\fm$-adic completion of $M$.\medskip

We start with the following proposition.

\begin{proposition}\label{flatness}
Let $\fa$ be an ideal of $R$.
If $F$ is a flat $R$-module, then so is $F^\wedge_\fa$.
\end{proposition}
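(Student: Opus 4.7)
The plan is to verify flatness of $F^\wedge_\fa$ via the standard criterion that for every finitely generated ideal $I$ of $R$, the natural multiplication map
$$\mu_I : I \otimes_R F^\wedge_\fa \longrightarrow F^\wedge_\fa$$
is injective, i.e., $\Tor^R_1(R/I, F^\wedge_\fa) = 0$. The strategy is to factor $\mu_I$ as
$$I \otimes_R F^\wedge_\fa \xrightarrow{\ \alpha\ } (IF)^\wedge_\fa \xrightarrow{\ \beta\ } F^\wedge_\fa$$
and to prove separately that $\beta$ is injective and $\alpha$ is an isomorphism.

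For the injectivity of $\beta$, I would invoke Artin--Rees applied to the finitely generated ideals $I$ and $\fa$ in the Noetherian ring $R$: there exists $c$ with $I \cap \fa^n \subseteq \fa^{n-c} I$ for all $n \geq c$. Since $F$ is flat, tensoring with $F$ preserves intersections of finitely generated submodules of $R$, so
$$IF \cap \fa^n F = (I \cap \fa^n) F \subseteq \fa^{n-c}(IF).$$
Hence the $\fa$-adic filtration $\{\fa^n IF\}$ on $IF$ and the subspace filtration $\{IF \cap \fa^n F\}$ are cofinal, giving $(IF)^\wedge_\fa \cong \varprojlim_n IF/(IF \cap \fa^n F)$; the latter embeds into $\varprojlim_n F/\fa^n F = F^\wedge_\fa$ as the inverse limit of a sub-inverse-system.

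For $\alpha$ being an isomorphism, I would fix a finite presentation $R^p \to R^n \to I \to 0$ (using that $R$ is Noetherian) with $J := \Ker(R^n \to I)$. Tensoring with $F^\wedge_\fa$ yields the right-exact sequence $(F^\wedge_\fa)^p \to (F^\wedge_\fa)^n \to I \otimes_R F^\wedge_\fa \to 0$. In parallel, tensoring with $F$ produces $F^p \twoheadrightarrow JF \hookrightarrow F^n$ with $F^n/JF = IF$; applying $\Lambda^{V(\fa)}$ and using (i) that completion commutes with finite direct sums, (ii) that inverse systems with surjective transitions are Mittag--Leffler so completion preserves surjectivity among such systems, and (iii) a second Artin--Rees estimate on $J \subseteq R^n$ combined with flatness to embed $(JF)^\wedge_\fa$ in $(F^n)^\wedge_\fa$, one obtains $(F^n)^\wedge_\fa/(JF)^\wedge_\fa \cong (IF)^\wedge_\fa$. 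The two right-exact sequences agree term by term via $(F^k)^\wedge_\fa = (F^\wedge_\fa)^k$, so $\alpha$ is an isomorphism.

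The main obstacle is the appearance of non-finitely-generated modules: Artin--Rees is only directly available at the level of ideals, yet $F$, $IF$, and $JF$ are generally not finitely generated. The key trick is to transfer the Artin--Rees estimates from ideals to modules by using that flatness of $F$ makes tensoring preserve intersections of finitely generated submodules of $R$ (respectively of $R^n$); the remaining inverse-limit manipulations then reduce to routine applications of the Mittag--Leffler condition.
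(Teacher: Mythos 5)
Your proof is correct and takes essentially the same route as the paper's: both rely on the Artin--Rees lemma together with the fact that tensoring with the flat module $F$ commutes with finite intersections of submodules (the Bourbaki result the paper cites) to compare the induced filtrations with the $\fa$-adic ones on $IF$ and $JF$, and then pass to inverse limits using Mittag--Leffler. The paper packages these estimates into Lemmas \ref{flat tensor completion} and \ref{tensor representation} and concludes that $F^\wedge_\fa\otimes_R(-)$ is exact on finitely generated modules, whereas you feed the same estimates directly into the ideal criterion for flatness via the factorization $I\otimes_R F^\wedge_\fa\to (IF)^\wedge_\fa\to F^\wedge_\fa$; the underlying argument is the same.
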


As stated in \cite[2.4]{Si}, this fact is known.
For the reader's convenience, we mention that this proposition follows from the two lemmas below.

\begin{lemma}\label{flat tensor completion}
Let $\fa$ be an ideal of $R$ and $F$ be a flat $R$-module.
We consider a short exact sequence of finitely generated $R$-modules
$$\begin{CD}
0@>>>&L@>>>&M@>>>&N@>>>0.
\end{CD}$$
Then 
$$\begin{CD} 
0@>>>&(F\otimes_RL )^\wedge_\fa@>>>&(F\otimes_RM)^\wedge_\fa @>>>&(F\otimes_R N)^\wedge_\fa @>>>0
\end{CD}$$
is exact.
\end{lemma}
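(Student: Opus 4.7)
The plan is to reduce to Artin--Rees on the finitely generated modules $L \subseteq M$ and then transport everything through the flat tensor product $F\otimes_R -$. Set $L' = F\otimes_R L$, $M' = F\otimes_R M$, $N' = F\otimes_R N$. By flatness of $F$, the sequence $0 \to L' \to M' \to N' \to 0$ is exact, and for every ideal $\fb$ of $R$ the natural map $F \otimes_R \fb M \to F\otimes_R M$ is injective with image $\fb M'$; in particular $F\otimes_R(\fa^n M \cap L) = \fa^n M' \cap L'$ inside $M'$.

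First I would invoke the Artin--Rees lemma for the Noetherian ring $R$ and the finitely generated pair $L \subseteq M$: there exists $k \geq 0$ with $\fa^n M \cap L \subseteq \fa^{n-k} L$ for all $n \geq k$. Tensoring this inclusion with the flat module $F$ and using the identifications of the previous paragraph gives
$$\fa^n M' \cap L' \;\subseteq\; \fa^{n-k} L' \qquad (n \geq k),$$
while the reverse containment $\fa^n L' \subseteq \fa^n M' \cap L'$ is automatic. Thus the two filtrations $\{\fa^n L'\}_n$ and $\{\fa^n M' \cap L'\}_n$ on $L'$ are cofinal, and consequently
$$\varprojlim_n L'/(\fa^n M' \cap L') \;\cong\; \varprojlim_n L'/\fa^n L' \;=\; (L')^\wedge_\fa.$$

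Next, for each $n$ the flatness of $F$ together with the three-term exact sequence of finitely generated modules yields a short exact sequence
$$0 \to L'/(\fa^n M' \cap L') \to M'/\fa^n M' \to N'/\fa^n N' \to 0.$$
The transition maps of the inverse system $\{L'/(\fa^n M'\cap L')\}_n$ are surjective, hence this system is Mittag--Leffler and $\varprojlim^1$ vanishes. Passing to $\varprojlim_n$ in the above short exact sequence of inverse systems and identifying the first limit with $(L')^\wedge_\fa$ yields the desired short exact sequence $0 \to (L')^\wedge_\fa \to (M')^\wedge_\fa \to (N')^\wedge_\fa \to 0$.

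The only real obstacle is that Artin--Rees does not directly apply to $L' \subseteq M'$, since these modules need not be finitely generated. The key move is to apply Artin--Rees upstairs, at the level of $L \subseteq M$, and then use flatness of $F$ to push the comparability of filtrations through the tensor product; everything else is a routine Mittag--Leffler argument.
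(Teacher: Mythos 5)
Your proof is correct and follows essentially the same route the paper sketches: Artin--Rees on $L\subseteq M$, transport through the flat tensor $F\otimes_R-$, and a Mittag--Leffler/$\varprojlim$ argument on the resulting inverse systems of short exact sequences. One small point of precision: the equality $F\otimes_R(\fa^n M\cap L)=\fa^n M'\cap L'$ does not follow merely from identifying $F\otimes_R \fb M$ with $\fb M'$ as your ``in particular'' suggests, but requires the standard fact that a flat module preserves finite intersections of submodules---this is exactly the Bourbaki proposition the paper cites, so it is worth stating explicitly rather than folding it in.
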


\begin{lemma}\label{tensor representation}
Let $\fa$ and $F$ be as above. 
Then we have a natural isomorphism 
$$(F\otimes_RM)^{\wedge}_\fa\cong F^{\wedge}_\fa\otimes_{R}M$$
for any finitely generated $R$-module $M$.
\end{lemma}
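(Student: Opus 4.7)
The plan is to construct an explicit natural map $\phi_M \colon F^\wedge_\fa \otimes_R M \to (F \otimes_R M)^\wedge_\fa$ and then show it is an isomorphism by reducing to the free case via a finite presentation. The map $\phi_M$ can be defined through the inverse system: given $f = (f_n + \fa^n F)_n \in F^\wedge_\fa$ and $m \in M$, the class $f_n \otimes m + \fa^n (F \otimes_R M)$ in the quotient $(F \otimes_R M)/\fa^n(F \otimes_R M)$ is independent of the representative $f_n$ (because $\fa^n F \otimes M$ maps into $\fa^n(F \otimes_R M)$) and compatible with respect to $n$ (using $f_k - f_n \in \fa^n F$ for $k\geq n$). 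This yields a well-defined $R$-bilinear map $F^\wedge_\fa \times M \to (F \otimes_R M)^\wedge_\fa$, hence the desired natural morphism $\phi_M$. For $M = R$, and more generally $M = R^k$, both sides are canonically $(F^\wedge_\fa)^k$ and $\phi_M$ is the identity.

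Next, since $R$ is Noetherian and $M$ is finitely generated, I would choose a finite presentation $R^m \to R^n \to M \to 0$ and let $K = \Ker(R^n \to M)$ and $K_1 = \Ker(R^m \to K)$, both of which are finitely generated. Applying the exact functor $F \otimes_R -$ to the two short exact sequences $0 \to K \to R^n \to M \to 0$ and $0 \to K_1 \to R^m \to K \to 0$ produces short exact sequences of finitely generated modules, to which Lemma \ref{flat tensor completion} applies. Splicing the resulting pair of short exact sequences of $\fa$-adic completions yields the right-exact four-term sequence
$$
(F \otimes_R R^m)^\wedge_\fa \longrightarrow (F \otimes_R R^n)^\wedge_\fa \longrightarrow (F \otimes_R M)^\wedge_\fa \longrightarrow 0.
$$

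Finally, I would compare this with the top row $F^\wedge_\fa \otimes_R R^m \to F^\wedge_\fa \otimes_R R^n \to F^\wedge_\fa \otimes_R M \to 0$, which is exact because tensoring is right exact. Naturality of $\phi$ arranges the two rows into a commutative diagram in which the first two vertical maps are isomorphisms by the free case, and the five lemma (or the universal property of cokernels) then forces $\phi_M$ to be an isomorphism as well.

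The main obstacle I anticipate is the initial construction of $\phi_M$ together with the verification of its well-definedness and naturality in $M$; once $\phi_M$ is in hand, the key technical input is the splicing argument that turns Lemma \ref{flat tensor completion} into right-exactness of $(F\otimes_R-)^\wedge_\fa$ on finitely presented modules, after which the five-lemma conclusion is routine.
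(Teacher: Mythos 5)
Your proof is correct, and it is the natural and standard filling-in of the paper's one-line indication that Lemma \ref{tensor representation} follows from Lemma \ref{flat tensor completion}: the explicit bilinear construction of $\phi_M$, the reduction to the free case via a finite presentation, the splicing of two applications of Lemma \ref{flat tensor completion} (via the finitely generated kernels $K$ and $K_1$, which uses Noetherianity) to get right-exactness of $(F\otimes_R-)^\wedge_\fa$, and the five-lemma comparison are exactly the steps one needs. The paper gives no further detail, so this is essentially the intended argument.
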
\medskip

Using Artin-Rees Lemma and \cite[I; \S 2; Proposition 6]{Bourbaki}, we can prove Lemma \ref{flat tensor completion}, from which we obtain Lemma \ref{tensor representation}.
Furthermore, Lemma \ref{flat tensor completion} and Lemma \ref{tensor representation} imply that $F^{\wedge}_\fa\otimes_{R}(-)$ is an exact functor from the category of finitely generated $R$-modules to $\Mod R$. Therefore Proposition \ref{flatness} holds.\qed\bigskip

It is also possible to show that $F^{\wedge}_\fa$ is flat over $R^\wedge_\fa$ by the same argument as above.\medskip

If $R$ is a local ring with maximal ideal $\fm$, then $\fm$-adically complete flat $R$-modules are characterized as follows:

\begin{lemma}\label{characterization complete flat}
Let $(R,\fm, k)$ be a local ring and $F$ a flat $R$-module. 
Set $B=\dim_{k} F/\fm F$.
Then there is an isomorphism
$$\widehat{F}\cong \widehat{\bigoplus_{B}R},$$
where $\bigoplus_{B}R$ is the direct sum of $B$-copies of $R$.
\end{lemma}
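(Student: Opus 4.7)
The strategy is to build a natural candidate map and show that after $\fm$-adic completion it becomes an isomorphism. Concretely, set $G := \bigoplus_B R$ and fix a lift $\{x_\alpha\}$ of a $k$-basis of $F/\fm F$ to elements of $F$; define $\phi\colon G \to F$ by sending the $\alpha$-th standard generator of $G$ to $x_\alpha$. By construction $\phi_1 := \phi \otimes_R R/\fm$ is an isomorphism of $k$-vector spaces. The plan is then to prove inductively that $\phi_n := \phi \otimes_R R/\fm^n$ is an isomorphism for every $n \geq 1$, and finally to pass to the inverse limit.

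For the inductive step, I would tensor the short exact sequence
$0 \to \fm^n/\fm^{n+1} \to R/\fm^{n+1} \to R/\fm^n \to 0$
with both $G$ and $F$ over $R$. Since $G$ and $F$ are flat, this yields two short exact sequences, and $\phi$ induces a morphism between them. The rightmost vertical arrow is $\phi_n$, which is an isomorphism by the inductive hypothesis; the middle one is $\phi_{n+1}$, which is what we want. By the five lemma, it suffices to check that the leftmost arrow $\fm^n/\fm^{n+1}\otimes_R G \to \fm^n/\fm^{n+1}\otimes_R F$ induced by $\phi$ is an isomorphism.

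For this leftmost arrow I would invoke the standard identification: whenever $N$ is an $R/\fm$-module and $M$ is an $R$-module, one has a natural isomorphism $N\otimes_R M \cong N\otimes_k (M/\fm M)$. Applied with $N = \fm^n/\fm^{n+1}$ and $M\in\{G,F\}$, this rewrites the leftmost arrow as $\fm^n/\fm^{n+1}\otimes_k \phi_1$, which is an isomorphism because $\phi_1$ is.

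Finally, taking the inverse limit over $n$ of the isomorphisms $\phi_n$ yields the desired isomorphism $\widehat{\phi}\colon \widehat{G}\to \widehat{F}$. The only genuine subtlety is the identification of the leftmost graded piece in the five-lemma diagram; once that is in hand, the rest is routine Nakayama-style bookkeeping together with exactness of inverse limits on surjective systems of Artinian quotients.
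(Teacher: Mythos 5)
Your proof is correct. The paper itself gives no argument for this lemma, instead citing Raynaud--Gruson \cite[II; Proposition 2.4.3.1]{RG} and Enochs--Jenda \cite[Lemma 6.7.4]{EJ}; the argument you give is essentially the standard one appearing in those sources, reducing modulo $\fm^n$, comparing the graded pieces via the identification $N\otimes_R M\cong N\otimes_k(M/\fm M)$ for an $R/\fm$-module $N$, applying the short five lemma inductively, and passing to the inverse limit.
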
\medskip

This lemma is proved in \cite[II; Proposition 2.4.3.1]{RG}. See also \cite[Lemma 6.7.4]{EJ}.\\

As in the introduction, we denote by $\cD=D(\Mod R)$ the derived category of all complexes of $R$-modules. 
We write complexes $X$ cohomologically;
$$X=(\cdots \rightarrow X^{i-1}\rightarrow X^i\rightarrow X^{i+1}\rightarrow\cdots).$$

For a complex $P$ (resp. $F$) of $R$-modules, 
we say that $P$ (resp. $F$) is $K$-projective (resp. $K$-flat) if $\Hom_R(P,-)$ (resp. $(-)\otimes_RF$) preserves acyclicity of complexes, where a complex is called acyclic if all its cohomology modules are zero.

Let $\fa$ be an ideal of $R$ and $X\in \cD$. If $P$ is a $K$-projective resolution of $X$, 
then we have $\LLambda^{V(\fa)}X\cong\Lambda^{V(\fa)}P$. 
Moreover, $\LLambda^{V(\fa)}X$ is also isomorphic to $\Lambda^{V(\fa)}F$ if $F$ is a $K$-flat resolution of $X$.
In addition, it is known that the following proposition holds. 

\begin{proposition}\label{not necessarily K-flat}
Let $\fa$ be an ideal of $R$ and $X$ be a complex of flat $R$-modules. Then $\LLambda^{V(\fa)}X$ is isomorphic to $\Lambda^{V(\fa)}X$.
\end{proposition}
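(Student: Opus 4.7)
The plan is to compute $\LLambda^{V(\fa)}X$ as a homotopy inverse limit of a tower of complexes which, thanks to the flatness of the terms of $X$, collapses to a strict inverse limit.

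First, I would invoke the well-known description of derived completion as a homotopy inverse limit,
\[
\LLambda^{V(\fa)}Y \;\simeq\; \mathrm{R}\varprojlim_n\bigl(Y \Lotimes R/\fa^n\bigr),
\]
valid for every $Y\in\cD$. Because each $X^i$ is flat, the derived tensor product $X \Lotimes R/\fa^n$ is already represented, without passing to any resolution, by the termwise quotient $X/\fa^n X$; higher $\Tor$s vanish in every degree. Hence the right-hand side rewrites as $\mathrm{R}\varprojlim_n X/\fa^n X$.

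Next, I would observe that in every cohomological degree $i$ the transition map $X^i/\fa^{n+1}X^i \twoheadrightarrow X^i/\fa^n X^i$ is surjective: it is obtained by tensoring the surjection $R/\fa^{n+1}\twoheadrightarrow R/\fa^n$ with the flat module $X^i$. The tower is therefore Mittag--Leffler in every degree, which forces $\mathrm{R}^1\varprojlim_n$ to vanish degreewise, and consequently $\mathrm{R}\varprojlim_n$ collapses to the termwise $\varprojlim_n$. That termwise limit is by definition $\Lambda^{V(\fa)}X$, giving the required isomorphism.

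The main obstacle is the first step: while the identification of $\LLambda^{V(\fa)}$ with $\mathrm{R}\varprojlim_n(-\Lotimes R/\fa^n)$ is standard, it is the technical heart of the argument and the one place where an external fact must be invoked. If one prefers a self-contained approach, one can instead take a $K$-flat resolution $F\to X$ and argue directly that termwise $\fa$-adic completion sends the acyclic cone of $F\to X$ to an acyclic complex; for unbounded $X$ this again reduces to a Mittag--Leffler argument on the short exact sequences $0\to Z^n\to C^n\to Z^{n+1}\to 0$, after noting that the cycles of a $K$-flat acyclic complex are flat and invoking Lemma~\ref{flat tensor completion}. Either way, it is exactly the flatness of the terms of $X$ that eliminates the gap between $\Lambda^{V(\fa)}$ and $\LLambda^{V(\fa)}$.
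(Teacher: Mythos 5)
The step that fails is the identification $X\Lotimes R/\fa^n \cong X/\fa^nX$. Having flat terms does not make an unbounded complex $K$\nobreakdash-flat, and for such a complex the natural map $X\Lotimes N\to X\otimes_R N$ need not be a quasi-isomorphism; the termwise vanishing of $\Tor_{>0}(X^i,R/\fa^n)$ gives a spectral sequence whose convergence is exactly what is in doubt when $X$ is unbounded. Concretely, take $R=\Z/4$, $\fa=(2)$, and $X=(\cdots\to R\xrightarrow{\,2\,}R\xrightarrow{\,2\,}R\to\cdots)$, an acyclic complex of free modules: then $X\simeq 0$ so $X\Lotimes R/\fa=0$, while $X\otimes_R R/\fa$ has nonzero cohomology in every degree. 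In fact, the question of whether $\Lambda^{V(\fa)}$ applied degreewise agrees with the left derived functor is essentially equivalent to the question of whether $X\Lotimes R/\fa^n$ equals $X\otimes_R R/\fa^n$, so the argument begs the question. The fallback suggestion has the same defect: the cone $C$ of a $K$\nobreakdash-flat resolution $F\to X$ is an acyclic complex of flat modules, but it is not a priori $K$\nobreakdash-flat, so its cycle modules need not be flat; if $C$ were $K$\nobreakdash-flat then $X$ itself would be and there would be nothing to prove. (Separately, the opening formula $\LLambda^{V(\fa)}Y\simeq\mathrm{R}\varprojlim_n(Y\Lotimes R/\fa^n)$ is true for Noetherian $R$ via weak proregularity, but it is not an elementary fact and needs a citation.)

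What actually closes the gap in the paper is a different finiteness input: there is an integer $n$ with $H^{-i}(\LLambda^{V(\fa)}M)=0$ for all $i>n$ and every module $M$, i.e.\ $\LLambda^{V(\fa)}$ has uniformly bounded cohomological amplitude. With this one shows that $\Lambda^{V(\fa)}$ preserves acyclicity of complexes of flat modules by truncation: for an acyclic complex $C$ of flats, the brutal truncation $\sigma_{\leq m}C$ is a bounded-above complex of flats (hence $K$\nobreakdash-flat) quasi-isomorphic to the shifted cycle module $Z^{m+1}[-m]$, so $\Lambda^{V(\fa)}\sigma_{\leq m}C\simeq\LLambda^{V(\fa)}(Z^{m+1})[-m]$ is concentrated in degrees $\geq m-n$; since $\Lambda^{V(\fa)}C$ agrees with $\Lambda^{V(\fa)}\sigma_{\leq m}C$ in degrees below $m$, taking $m>k+n$ gives $H^k(\Lambda^{V(\fa)}C)=0$ for every $k$. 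Applying this to the cone of $F\to X$ for a $K$\nobreakdash-flat resolution $F$ finishes the proof. You would need this bounded-amplitude argument (or some equivalent finiteness input) rather than a Mittag--Leffler collapse; the Mittag--Leffler part of your write-up is fine but enters only at the last step.
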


To prove this proposition, we remark that there is an integer $n\geq 0$ such that $H^i(\LLambda^{V(\fa)}M)=0$ for all $i>n$ and all $R$-modules $M$, see \cite[Theorem 1.9]{GM} or \cite[p.\ 15]{AJL}.
Using this fact, we can show that $\Lambda^{V(\fa)}$ preserves acyclicity of complexes of flat $R$-modules.
Then it is straightforward to see that $\LLambda^{V(\fa)}X$ is isomorphic to $\Lambda^{V(\fa)}X$.
\qed\bigskip

Let $W$ be any subset of $\Spec R$.
Recall that $\gamma_W$ denotes a right adjoint to the inclusion functor $i_W:\cL_W\hookrightarrow \cD$, and $\lambda^W$ denotes a left adjoint to the inclusion functor $j^W:\cC^W\hookrightarrow \cD$. 
Moreover, $\gamma_W$ and $\lambda^W$ are identified with $i_W \gamma_W$ and $j^W\lambda^W$ respectively.
We write $\varepsilon_{W}:\gamma_{W}\to \id_{\cD}$ and $\eta^W:\id_\cD\to \lambda^W$ for the natural morphisms induced by the adjointness of $(i_{W},\gamma_{W})$ and $(\lambda^W, j^W)$ respectively.

Note that $\lambda^{W}\eta^W$ (resp. $\gamma_W\varepsilon_W$) is invertible, and the equality $\lambda^{W}\eta^W=\eta^W\lambda^{W}$ (resp. $\gamma_W\varepsilon_W=\varepsilon_W\gamma_W$) holds, {\it i.e.}, $\lambda^{W}$ (resp. $\gamma_W$) is a localization (resp. colocalization) functor on $\cD$. See \cite{K} for more details.
In this paper, we call $\lambda^W$ the localization functor with cosupport in $W$.\bigskip

Using (\ref{star}), we restate \cite[Lemma 2.1]{NY} as follows.

\begin{lemma} \label{basic lemma}
Let  $W$  be a subset of  $\Spec R$.
For any $X\in \cD$, there is a triangle of the following form;
$$
\begin{CD}
\gamma_{W^c} X @>\varepsilon_{W^c}X>> X @>\eta^WX>> \lambda^W X @>>> \gamma _{W^c} X[1].
\end{CD}
$$
Furthermore, if 
$$
\begin{CD}
X' @>>> X @>>> X''@>>> X'[1]
\end{CD}
$$
is a triangle with $X' \in{}^\perp\cC^{W}=\cL_{W^c}$ and  $X'' \in \cC^W=\cL_{W^c}^\perp$, then there exist unique isomorphisms $a:\gamma _{W^c} X \to X'$ and  $b:\lambda ^W X \to X''$ such that the following diagram is commutative:
$$
\begin{CD}
\gamma_{W^c} X @>\varepsilon_{W^c}X>> X @>\eta^WX>> \lambda^W X @>>> \gamma _{W^c} X[1]\\
@VVaV  @| @VVbV@VVa[1]V \\
X' @>>> X @>>> X''@>>> X'[1]
\end{CD}
$$
\end{lemma}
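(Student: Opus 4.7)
The plan is to reduce everything to the single orthogonality principle (\ref{star}), namely $\cC^W = \cL_{W^c}^\perp$: for any $Z \in \cL_{W^c}$ and $Y \in \cC^W$, all $\Hom_\cD$ groups between $Z$ and arbitrary shifts of $Y$ vanish. I will use this both to produce the canonical triangle and to force uniqueness of the comparison isomorphisms.

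For existence of the canonical triangle, I would complete the counit $\varepsilon_{W^c}X$ to a distinguished triangle
$$\gamma_{W^c}X \xrightarrow{\varepsilon_{W^c}X} X \xrightarrow{g} Y \to \gamma_{W^c}X[1].$$
For any $Z\in\cL_{W^c}$, the adjunction $(i_{W^c},\gamma_{W^c})$ identifies $\Hom_\cD(Z,\gamma_{W^c}X)\xrightarrow{\sim}\Hom_\cD(Z,X)$ via postcomposition with $\varepsilon_{W^c}X$, and likewise for every shift of $Z$. Feeding these isomorphisms into the long exact sequence obtained from $\Hom_\cD(Z,-)$ forces $\Hom_\cD(Z,Y[n])=0$ for all $n$, so $Y\in\cL_{W^c}^\perp=\cC^W$. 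Dually, for $Z'\in\cC^W$, applying $\Hom_\cD(-,Z')$ to the triangle and using $\gamma_{W^c}X,\gamma_{W^c}X[1]\in\cL_{W^c}$ yields that $g^{*}\colon\Hom_\cD(Y,Z')\to\Hom_\cD(X,Z')$ is an isomorphism. This is exactly the universal property characterizing $\eta^W X\colon X\to\lambda^W X$; hence $(Y,g)$ and $(\lambda^W X,\eta^W X)$ are canonically isomorphic, and transporting along the unique such isomorphism produces the claimed canonical triangle.

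For the uniqueness statement, given any triangle $X'\to X\to X''\to X'[1]$ with $X'\in\cL_{W^c}$ and $X''\in\cC^W$, the same orthogonality gives $\Hom_\cD(\gamma_{W^c}X,X'')=\Hom_\cD(\gamma_{W^c}X,X''[-1])=0$, so $\Hom_\cD(\gamma_{W^c}X,X')\xrightarrow{\sim}\Hom_\cD(\gamma_{W^c}X,X)$; the unique preimage of $\varepsilon_{W^c}X$ is the desired $a$. Symmetrically, $(\eta^W X)^{*}\colon\Hom_\cD(\lambda^W X,X'')\xrightarrow{\sim}\Hom_\cD(X,X'')$ produces the unique $b$ compatible with the middle map. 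The left square commutes by construction of $a$, the middle by construction of $b$, and the third square commutes by TR3: the triangulated axiom produces some $c\colon\lambda^W X\to X''$ completing $(a,\id_X)$ to a morphism of triangles, and by the uniqueness of $b$ one has $c=b$. Exchanging the roles of the canonical triangle and the given triangle produces analogous maps $a'\colon X'\to\gamma_{W^c}X$ and $b'\colon X''\to\lambda^W X$; the induced $\Hom$-isomorphisms force $aa'=\id$, $a'a=\id$ and likewise for $b$, so both are isomorphisms.

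The step I expect will require the most care is the coherence between the two separately constructed maps $a$ and $b$: namely, verifying that they assemble into a morphism of triangles, in particular that the third square with $a[1]$ commutes. This is a routine but non-trivial application of TR3 combined with the uniqueness clauses of the two universal properties, and is the only place where the argument is not purely a formal consequence of $\cC^W=\cL_{W^c}^\perp$.
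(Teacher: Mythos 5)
Your argument is correct and is the standard proof from localization theory of triangulated categories (cf.\ \cite{K}): one reduces everything to the orthogonality $\cC^W=\cL_{W^c}^\perp$, builds the triangle by completing the counit of $(i_{W^c},\gamma_{W^c})$, and reads off both the membership $Y\in\cC^W$ and the comparison/uniqueness statements from long exact $\Hom$-sequences together with TR3. The paper itself does not write out a proof; it simply cites \cite[Lemma~2.1]{NY} combined with the identifications in (\ref{star}), so your proposal fills in exactly the argument that citation is pointing to.
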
\vspace{4mm}

\begin{remark}\label{characterization perp}
(i) Let $X\in \cD$ and $W$ be a subset of $\Spec R$.
By Lemma \ref{basic lemma}, $X$ belongs to ${}^\perp\cC^{W}=\cL_{W^c}$ if and only if $\lambda^{W}X=0$. 
This is equivalent to saying that $\lambda^{\{\fp\}}X= 0$ for all $\fp\in W$, since  ${}^\perp\cC^{W}=\cL_{W^c}=\bigcap_{\fp\in W}\cL_{\{\fp\}^c}=\bigcap_{\fp\in W}{}^\perp\cC^{\{\fp\}}$.

(ii) Let $W_0$ and $W$ be subsets of $\Spec R$ with $W_0\subseteq W$.
It follows from the uniqueness of adjoint functors that  
$$\lambda^{W_0}\lambda^{W}\cong \lambda^{W_0}\cong\lambda^{W}\lambda^{W_0},$$
see also \cite[Remark 3.7 (i)]{NY}.
\end{remark}

Now we give a typical example of localization functors.
Let $S$ be a multiplicatively closed subset $S$ of $R$, and set $U_S=\Set{\fp\in \Spec R | \fp\cap S=\emptyset}$.
It is known that the localization functor $\lambda^{U_S}$ with cosupport in $U_S$ is nothing but $(-)\otimes_RS^{-1}R$.
For the reader's convenience, we give a proof of this fact.
Let $X\in \cD$. It is clear that 
$\cosupp X\otimes_RS^{-1}R\subseteq U_S$, or equivalently, $X\otimes_RS^{-1}R\in \cC^{U_S}$.
Moreover, embedding the natural morphism $X\to X\otimes_RS^{-1}R$ into a triangle
$$\begin{CD} 
C@>>> X@>>> X\otimes_RS^{-1}R@>>>C[1],
\end{CD}$$
we have $C\otimes_RS^{-1}R=0$. 
This yields an inclusion relation $\supp C\subseteq (U_S)^c$. 
Hence it holds that $C\in \cL_{(U_S)^c}$.
Since we have shown that $C\in \cL_{(U_S)^c}$ and $X\otimes_RS^{-1}R\in \cC^{U_S}$, it follows from Lemma \ref{basic lemma} that $\lambda^{U_S}X\cong X\otimes_RS^{-1}R$.
Therefore we obtain the isomorphism
\begin{align}\lambda^{U_S}\cong (-)\otimes_RS^{-1}R.
\label{usual localization}
\end{align}

For $\fp \in \Spec R$, we write  $U(\fp)=\Set{\fq\in \Spec R| \fq\subseteq \fp}$. If $S=R\backslash \fp$, then  $U(\fp)$ is equal to $U_S$, so that $\lambda^{U(\fp)}\cong (-)\otimes_RR_\fp$ by (\ref{usual localization}).
We remark that $\lambda^{U(\fp)}=\lambda_{U(\fp)^c}$ is written as $L_{Z(\fp)}$ in \cite{BIK}, where $Z(\fp)=U(\fp)^c$.\medskip

There is another important example of localization functors.
Let $\fa$ be an ideal of $R$.
It was proved by \cite{GM} and \cite{AJL} that $\LLambda^{V(\fa)}:\cD\to \cD$ is a right adjoint to $\RGamma_{V(\fa)}:\cD\to \cD$. 
In \cite[Proposition 5.1]{NY}, using the adjointness property of $(\RGamma_{V(\fa)}, \LLambda^{V(\fa)})$, we proved that $\lambda^{V(\fa)}=\lambda_{V(\fa)^c}$ coincides with $\LLambda^{V(\fa)}$.
Hence there is an isomorphism
\begin{align}\lambda^{V(\fa)} \cong\LLambda^{V(\fa)}.
\label{derived local homology}
\end{align}
The functor $H_i^{\fa}(-)=H^{-i}(\LLambda^{V(\fa)}(-))$ is called the $i$th local homology functor with respect to $\fa$. \medskip

A subset $W$ of $\Spec R$ is said to be specialization-closed (resp. generalization-closed) provided that the following condition holds; if $\fp\in W$ and $\fq\in \Spec R$ with $\fp\subseteq \fq$ (resp. $\fp\supseteq \fq$), then $\fq\in W$.
 
If $V$ is a specialization-closed subset, then
we have 
\begin{align}
\gamma_{V} \cong \RGamma_{V},
\label{derived local cohomology}
\end{align}
see \cite[Appendix 3.5]{L}.\medskip


\section{Auxiliary results on localization functors}
\label{3}

In this section, we give several results to compute  localization functors $\lambda^W$ with cosupports in  arbitrary subsets $W$ of $\Spec R$.\medskip

We first give the following lemma.
\begin{lemma}\label{localizing and colocalizing}
Let $V$ be a specialization-closed subset of $\Spec R$.
Then we have the following equalities;
$$
{}^\perp\cC^{V}=\cL_{V^c}=\cL_{V}^\perp=\cC^{V^c}.
$$
\end{lemma}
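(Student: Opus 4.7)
The plan is to observe that the outer equalities ${}^\perp\cC^{V}=\cL_{V^c}$ and $\cL_V^\perp=\cC^{V^c}$ are instances of the identities (\ref{star}) from the introduction: the former by taking $W=V$, and the latter by taking $W=V^c$ (so that $(V^c)^c=V$). This reduces the whole proof to the single middle equality $\cL_{V^c}=\cL_V^\perp$.

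For the middle equality, I would first record the standard characterization $X\in\cL_V^\perp\Leftrightarrow\gamma_V X=0$: the nontrivial direction uses that $\gamma_V X\in\cL_V$, so $X\in\cL_V^\perp$ forces $\Hom_\cD(\gamma_V X,X)=0$, which under the adjunction $(i_V,\gamma_V)$ becomes $\Hom_{\cL_V}(\gamma_V X,\gamma_V X)=0$, forcing $\gamma_V X=0$. Since $V$ is specialization-closed, (\ref{derived local cohomology}) identifies $\gamma_V$ with $\RGamma_V$, so the middle equality becomes
\[
\cL_{V^c}=\{X\in\cD\mid \RGamma_V X=0\}.
\]

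To prove this last (classical) identity, I would reduce to the case $V=V(\fa)$ with $\fa$ finitely generated by writing $V=\bigcup_\alpha V(\fa_\alpha)$ and using $\RGamma_V\cong\hocolim_\alpha\RGamma_{\fa_\alpha}$; then a \v{C}ech-complex computation of $\RGamma_\fa$ yields the support formula $\supp\RGamma_\fa X=\supp X\cap V(\fa)$, from which both implications are immediate. Vanishing of $\supp X\cap V$ gives vanishing of every $\RGamma_{\fa_\alpha} X$ and hence of their hocolim, while a prime $\fp\in V\cap\supp X$ lies in some $V(\fa_\alpha)\cap\supp X$; tensoring with $\kappa(\fp)$ and using the fact that $\kappa(\fp)\in\cL_{V(\fa_\alpha)}$ (so $\RGamma_{\fa_\alpha}$ acts as the identity on $X\Lotimes\kappa(\fp)$) forces $\RGamma_{\fa_\alpha}X\neq 0$ in a way that propagates, via the same argument after replacing $\fa_\alpha$ by any larger-support ideal, to $\RGamma_V X\neq 0$.

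The main technical step is the support identity $\supp\RGamma_\fa X=\supp X\cap V(\fa)$ (and its hocolim-preservation used above); the rest is a formal consequence of (\ref{star}) and the adjointness of $(i_V,\gamma_V)$.
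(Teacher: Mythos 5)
Your reduction via (\ref{star}) to the single middle equality $\cL_{V^c}=\cL_V^\perp$ is exactly what the paper does; the paper then simply cites \cite[Lemma 4.3]{NY} for that equality, whereas you supply a self-contained proof. Your argument for the cited lemma---characterize $\cL_V^\perp$ as the kernel of $\gamma_V\cong\RGamma_V$, then use the support formula $\supp\RGamma_\fa X=\supp X\cap V(\fa)$ together with a filtered-hocolim reduction from a general specialization-closed $V$ to the ideal case---is correct and is essentially the standard proof of that lemma, so this is the same route with the citation filled in rather than a genuinely different one. My only reservation concerns the final paragraph, which is compressed to the point of obscuring the logic: nonvanishing of a single term $\RGamma_{\fa_\alpha}X$ in a filtered homotopy colimit does not by itself force nonvanishing of the colimit, and the phrase ``in a way that propagates'' elides the step that actually makes this work, namely that $(-)\Lotimes\kappa(\fp)$ commutes with filtered hocolim and that on the cofinal subsystem of ideals $\fa$ with $\fp\in V(\fa)\subseteq V$ the transition maps on $\RGamma_\fa X\Lotimes\kappa(\fp)\cong X\Lotimes\kappa(\fp)$ are all isomorphisms, so $\RGamma_VX\Lotimes\kappa(\fp)\cong X\Lotimes\kappa(\fp)\neq 0$. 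It would be cleaner, and shorter, to first establish $\supp\RGamma_V X=\supp X\cap V$ from the \v{C}ech computation plus hocolim commutation and then invoke the detection principle $Y=0\Leftrightarrow\supp Y=\emptyset$, which handles both implications at once.
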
\medskip

\begin{proof}
This follows from \cite[Lemma 4.3]{NY} and (\ref{star}).
\end{proof}

Let $W$ be a subset of $\Spec R$.
We denote by $\overline{W}^s$ the specialization closure of $W$, which is the smallest specialization-closed subset of $\Spec R$ containing $W$.
Moreover, for a subset $W_0$ of $W$, we say that $W_0$ is specialization-closed in $W$ if $V(\fp)\cap W\subseteq W_0$ for any $\fp\in W_0$ (cf. \cite[Definition 3.10]{NY}). 
This is equivalent to saying that $\overline{W_0}^s\cap W=W_0$.

\begin{corollary}\label{perp cosupport ver}
Let $W_0 \subseteq W\subseteq \Spec R$ be sets. 
Suppose that $W_0$ is specialization-closed in $W$.
Setting $W_1 = W \ \backslash \ W_0$, we have 
$\cC^{W_1}\subseteq {}^\perp \cC^{W_0}$.
\end{corollary}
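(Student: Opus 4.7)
The plan is to reduce everything to Lemma \ref{localizing and colocalizing} by passing through the specialization closure $V=\overline{W_0}^s$. First, using the equality ${}^\perp\cC^{W_0}=\cL_{W_0^c}$ from (\ref{star}), the desired inclusion $\cC^{W_1}\subseteq{}^\perp\cC^{W_0}$ rewrites as $\cC^{W_1}\subseteq\cL_{W_0^c}$; that is, any $X$ with $\cosupp X\subseteq W_1$ must satisfy $\supp X\subseteq W_0^c$.

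Next I would unpack the hypothesis that $W_0$ is specialization-closed in $W$. By definition this says $\overline{W_0}^s\cap W=W_0$, so setting $V=\overline{W_0}^s$ gives
$$W_1=W\setminus W_0=W\setminus(V\cap W)=W\cap V^c\subseteq V^c.$$
Since cosupport containment is monotone in the ambient set, this yields $\cC^{W_1}\subseteq\cC^{V^c}$.

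The key step is then to apply Lemma \ref{localizing and colocalizing} to the specialization-closed subset $V$, which gives $\cC^{V^c}=\cL_{V^c}$. From $W_0\subseteq V$ one has $V^c\subseteq W_0^c$, hence $\cL_{V^c}\subseteq\cL_{W_0^c}$. Stringing the inclusions together,
$$\cC^{W_1}\subseteq\cC^{V^c}=\cL_{V^c}\subseteq\cL_{W_0^c}={}^\perp\cC^{W_0},$$
which is the claim.

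There is no serious obstacle here; the entire content is the identification $\cC^{V^c}=\cL_{V^c}$ for specialization-closed $V$ supplied by Lemma \ref{localizing and colocalizing}, and all other steps are purely formal manipulations of the definitions of $\overline{W_0}^s$ and of "specialization-closed in $W$". The only thing one must be careful about is the direction of the inclusions: going from $W_0\subseteq V$ one gets $V^c\subseteq W_0^c$, so $\cL_{V^c}\subseteq\cL_{W_0^c}$ (not the reverse), which is precisely what is needed to land in $\cL_{W_0^c}$ rather than the smaller $\cL_{V^c}$.
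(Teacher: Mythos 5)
Your argument is correct and is essentially the paper's own proof: both set $V=\overline{W_0}^s$, note $W_1\subseteq V^c$, invoke Lemma \ref{localizing and colocalizing} for the specialization-closed set $V$, and finish by a monotonicity inclusion; your phrasing via $\cL_{V^c}\subseteq\cL_{W_0^c}$ and the paper's via ${}^\perp\cC^{V}\subseteq{}^\perp\cC^{W_0}$ are interchangeable by the equalities already packaged in that lemma and (\ref{star}).
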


\begin{proof}
Notice that $W_1\subseteq (\overline{W_0}^s)^c$.
Furthermore, we have  ${}^\perp\cC^{\overline{W_0}^s}=\cC^{(\overline{W_0}^s)^c}$ by Lemma \ref{localizing and colocalizing}.
Hence it holds that   
$\cC^{W_1}\subseteq \cC^{(\overline{W_0}^s)^c}={}^\perp\cC^{\overline{W_0}^s}\subseteq {}^\perp\cC^{W_0}.
$
\end{proof}

\begin{remark}\label{adjoint pair specialization-closed}
For an ideal $\fa$ of $R$, $\lambda^{V(\fa)}$ is a right adjoint to $\gamma_{V(\fa)}$ by (\ref{derived local homology}) and (\ref{derived local cohomology}). 
More generally, it is known that for any specialization-closed subset $V$, $\lambda^V:\cD\to \cD$ is a right adjoint to $\gamma_V:\cD\to \cD$.
We now prove this fact, which will be used in the next proposition.
Let $X,Y\in \cD$, and consider the following triangles;
$$\begin{CD}
\gamma_{V}X@>>> X@>>> \lambda^{V^c}X@>>>\gamma_{V}X[1],\\
\gamma_{V^c}Y@>>> Y@>>> \lambda^{V}Y@>>>\gamma_{V^c}Y[1].\\
\end{CD}$$
Since $\lambda^{V^c}X\in \cC^{V^c}={}^\perp\cC^V$ by Lemma \ref{localizing and colocalizing}, applying $\Hom_{\cD}(-,\lambda^{V}Y)$  to the first triangle, we have
$\Hom_{\cD}(\gamma_{V}X,\lambda^{V}Y)\cong \Hom_{\cD}(X,\lambda^{V}Y)$.
Moreover, Lemma \ref{localizing and colocalizing} implies that
$\gamma_{V^c}Y\in \cL_{V^c}=\cL_{V}^\perp$. Hence,
applying $\Hom_{\cD}(\gamma_{V}X,-)$ to the second triangle, we have  
$\Hom_{\cD}(\gamma_{V}X,Y)\cong \Hom_{\cD}(\gamma_{V}X,\lambda^{V}Y)$.
Thus there is a natural isomorphism
$\Hom_{\cD}(\gamma_{V}X,Y)\cong \Hom_{\cD}(X,\lambda^{V}Y)$, so that $(\gamma_V, \lambda^V)$ is an adjoint pair.
See also \cite[Remark 5.2]{NY}.
\end{remark}

\begin{proposition}\label{general lambda}
Let $V$ and $U$ be arbitrary subsets of $\Spec R$.
Suppose that one of the following conditions holds:
\begin{itemize}
\item[\rm (1)] $V$ is specialization-closed;
\item[\rm (2)] $U$ is generalization-closed.
\end{itemize}
Then we have an isomorphism 
$$\lambda^V\lambda^U\cong \lambda^{V\cap U}.$$ 
\end{proposition}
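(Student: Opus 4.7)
The plan is to apply Lemma \ref{basic lemma} to the composite
$X \xrightarrow{\eta^U X} \lambda^U X \xrightarrow{\eta^V \lambda^U X} \lambda^V \lambda^U X$: if I can show its fibre lies in $\cL_{(V \cap U)^c}$ and that the target $\lambda^V \lambda^U X$ lies in $\cC^{V \cap U} = \cC^V \cap \cC^U$, then the uniqueness statement in Lemma \ref{basic lemma} yields a canonical isomorphism $\lambda^V \lambda^U X \cong \lambda^{V \cap U} X$, functorial in $X$.

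The fibre condition is uniform across the two hypotheses. Applying the octahedral axiom to this composite, the fibre $C$ sits in a distinguished triangle
$$\gamma_{U^c} X \longrightarrow C \longrightarrow \gamma_{V^c} \lambda^U X \longrightarrow \gamma_{U^c} X[1].$$
Since $\gamma_{U^c} X \in \cL_{U^c}$ and $\gamma_{V^c} \lambda^U X \in \cL_{V^c}$, both outer terms lie in $\cL_{U^c \cup V^c} = \cL_{(V \cap U)^c}$, and hence so does $C$.

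For the target condition, the inclusion $\lambda^V \lambda^U X \in \cC^V$ is automatic; only membership in $\cC^U$ needs work. Under hypothesis (1), $V$ is specialization-closed and Remark \ref{adjoint pair specialization-closed} supplies the adjunction $(\gamma_V, \lambda^V)$. For any $M \in \cL_{U^c}$ this gives
$$\Hom_\cD(M, \lambda^V \lambda^U X) \cong \Hom_\cD(\gamma_V M, \lambda^U X),$$
which vanishes as soon as $\gamma_V M \in \cL_{U^c}$, since $\lambda^U X \in \cC^U = \cL_{U^c}^\perp$. By Neeman's theorem it is enough to verify $\gamma_V \kappa(\fp) \in \cL_{U^c}$ on each generator $\kappa(\fp)$ with $\fp \in U^c$: if $\fp \in V$ then $\gamma_V \kappa(\fp) \cong \kappa(\fp)$, whereas if $\fp \notin V$ then $\kappa(\fp) \in \cL_{V^c} = \cC^{V^c}$ by Lemma \ref{localizing and colocalizing}, forcing $\gamma_V \kappa(\fp) = 0$. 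Since $\gamma_V$ is left adjoint to $\lambda^V$ it preserves direct sums, so the conclusion extends from generators to all of $\cL_{U^c}$.

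Under hypothesis (2), $U$ is generalization-closed, so $U^c$ is specialization-closed and Lemma \ref{localizing and colocalizing} collapses cosupport into support via $\cC^U = \cL_U$. Writing $Y = \lambda^U X \in \cL_U$, the triangle $\gamma_{V^c} Y \to Y \to \lambda^V Y$ reduces the task to $\gamma_{V^c} Y \in \cL_U$, whereupon $\lambda^V Y \in \cL_U = \cC^U$ follows from closure of $\cL_U$ under extensions. Using the support preservation property $\supp \gamma_W Z \subseteq \supp Z \cap W$ from \cite{NY}, one has $\supp \gamma_{V^c} Y \subseteq \supp Y \cap V^c \subseteq U$, as required. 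The main obstacle is precisely this step in case (2): the adjunction used in case (1) is unavailable for arbitrary $V$, and one must instead exploit the identification $\cC^U = \cL_U$ granted by gen-closedness in order to trade the desired cosupport membership for the more tractable support condition.
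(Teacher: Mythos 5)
Your overall framing via Lemma \ref{basic lemma} and the octahedral axiom is a sound alternative to the paper's argument (the paper instead applies uniqueness of left adjoints, noting the chain $\Hom_\cD(\lambda^V\lambda^UX,Y)\cong\Hom_\cD(\lambda^UX,Y)\cong\Hom_\cD(X,Y)$ for $Y\in\cC^{V\cap U}$), and both reduce the problem to the same target claim $\lambda^V\lambda^UX\in\cC^U$. Your case (1) is essentially the paper's: the paper checks $\supp\gamma_V\kappa(\fp)\subseteq\{\fp\}$ directly to get $\gamma_V\kappa(\fp)\in\cL_{U^c}$ and then applies the adjunction $(\gamma_V,\lambda^V)$ termwise against $\kappa(\fp)$, whereas you do the same case analysis on $\gamma_V\kappa(\fp)$ and then pass to all of $\cL_{U^c}$ using that $\gamma_V$ preserves coproducts; these are equivalent, your route is just a touch longer.

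Case (2) has a genuine gap. The step $\gamma_{V^c}Y\in\cL_U$ is justified by a claimed inclusion $\supp\gamma_W Z\subseteq\supp Z\cap W$ for arbitrary $W$, cited to \cite{NY}. The containment $\supp\gamma_W Z\subseteq W$ is immediate from $\gamma_W Z\in\cL_W$, but the assertion $\supp\gamma_W Z\subseteq\supp Z$ for \emph{general} $W$ is not a result in \cite{NY} (nor in this paper), and it is not among the facts available to you; it would amount to a nontrivial compatibility of the colocalization $\gamma_W$ with the tensor-defined support, something one only knows for specialization-closed $W$ where $\gamma_W\cong\RGamma_W$. What you actually need can be obtained from the tools at hand: since $Y=\lambda^U X\in\cL_U$ one has $\gamma_U Y\cong Y$, and $U$ being generalization-closed means Proposition \ref{general gamma} (case (2)) gives $\gamma_{V^c}\gamma_U\cong\gamma_{V^c\cap U}$, whence $\gamma_{V^c}Y\cong\gamma_{V^c\cap U}Y\in\cL_{V^c\cap U}\subseteq\cL_U$. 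The paper avoids all of this by a slicker reduction to case (1): it applies $\lambda^{U^c}$ (legal by case (1), since $U^c$ is specialization-closed) and shows $\lambda^{U^c}\lambda^V\lambda^U X\cong\lambda^{U^c\cap V}\lambda^U X=0$ via Corollary \ref{perp cosupport ver}, then concludes by Lemma \ref{localizing and colocalizing}. You should replace the appeal to support preservation with one of these arguments.
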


\begin{proof}
Let $X\in \cD$ and $Y\in \cC^{V\cap U}=\cC^{V}\cap \cC^U$.
Then there are natural isomorphisms
$$\Hom_{\cD}(\lambda^{V}\lambda^{U}X,Y)\cong \Hom_{\cD}(\lambda^{U}X,Y)\cong \Hom_{\cD}(X,Y).$$
Recall that $\lambda^{V\cap U}$ is a left adjoint to the inclusion functor $\cC^{V\cap U}\hookrightarrow \cD$.
Hence, by the uniqueness of adjoint functors, we only have to verify that $\lambda^{V}\lambda^{U} X\in \cC^{V\cap U}$. 
Since $\lambda^{V}\lambda^UX\in \cC^{V}$, it remains to show that $\lambda^{V}\lambda^UX\in \cC^U$.

Case (1): 
Let $\fp\in U^c$.
Since $\supp \gamma_{V}\kappa(\fp)\subseteq \{\fp\}$, it follows from (\ref{star}) that $\gamma_{V}\kappa(\fp)\in \cL_{U^c}={}^\perp\cC^{U}$.
Thus, by the adjointness of $(\gamma_V, \lambda^V)$, we have 
$$\RHom_R(\kappa(\fp),\lambda^{V} \lambda^UX)\cong \RHom_R(\gamma_{V}\kappa(\fp), \lambda^UX)=0.$$
This implies that $\cosupp \lambda^{V}\lambda^UX\subseteq U$, {\it i.e.}, $\lambda^{V}\lambda^UX\in \cC^{U}$.

Case (2): Since $U^c$ is specialization-closed, the case (1) yields an isomorphism $\lambda^{U^c}\lambda^{V}\cong \lambda^{U^c\cap V}$.
Furthermore, setting $W=(U^c\cap V) \cup U$, we see that $U^c\cap V$ is specialization-closed in $W$, and $W\backslash(U^c\cap V)=U$. 
Hence we have $\lambda^{U^c}(\lambda^{V}\lambda^UX)\cong \lambda^{U^c\cap V}\lambda^UX=0$, by Corollary \ref{perp cosupport ver}. 
It then follows from Lemma \ref{localizing and colocalizing} that $\lambda^{V}\lambda^UX\in {}^\perp \cC^{U^c}=\cC^U$.
\end{proof}

\begin{remark}\label{general remark lambda}
For arbitrary subsets $W_0$ and $W_1$ of $\Spec R$, 
Remark \ref{characterization perp} (ii) and Proposition \ref{general lambda} yield the following isomorphisms;
$$\lambda^{W_0}\lambda^{W_1}\cong \lambda^{W_0}\lambda^{\overline{W_0}^s} \lambda^{W_1}\cong \lambda^{W_0}\lambda^{\overline{W_0}^s\cap W_1},$$
$$\lambda^{W_0}\lambda^{W_1}\cong \lambda^{W_0}\lambda^{\overline{W_1}^g} \lambda^{W_1}\cong \lambda^{W_0\cap \overline{W_1}^g}\lambda^{W_1}.$$
\end{remark}\vspace{2mm}

The next result is a corollary of (\ref{usual localization}), (\ref{derived local homology}) and Proposition \ref{general lambda}.

\begin{corollary}\label{lambda 1}
Let $S$ be a multiplicatively closed subset of $R$ and $\fa$ be an ideal of $R$.
We set  
$W=V(\fa)\cap U_S$.
Then we have
$$
\lambda^W \cong \LLambda^{V(\fa)}( -\otimes_RS^{-1}R).
$$
\end{corollary}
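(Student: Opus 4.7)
The plan is to recognize the corollary as a direct combination of Proposition \ref{general lambda} with the two identifications of $\lambda^W$ worked out earlier for the two basic kinds of subsets, namely multiplicatively closed localizations and ideal-adic left-derived completions.

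First I would set $V = V(\fa)$ and $U = U_S$, noting that $V(\fa)$ is specialization-closed in $\Spec R$ (closure under specialization is exactly what the containment $\fa \subseteq \fp \subseteq \fq$ gives). Hence hypothesis (1) of Proposition \ref{general lambda} applies, and that proposition yields a natural isomorphism $\lambda^{V(\fa)} \lambda^{U_S} \cong \lambda^{V(\fa) \cap U_S} = \lambda^W$.

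Next I would substitute in the two known formulas. By (\ref{usual localization}), $\lambda^{U_S} \cong (-)\otimes_R S^{-1}R$, and by (\ref{derived local homology}), $\lambda^{V(\fa)} \cong \LLambda^{V(\fa)}$. Composing gives $\lambda^W \cong \LLambda^{V(\fa)}\bigl((-)\otimes_R S^{-1}R\bigr)$, which is the desired conclusion.

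There is no real obstacle here; the whole content has been isolated in Proposition \ref{general lambda} (whose proof uses the adjoint pair $(\gamma_V,\lambda^V)$ from Remark \ref{adjoint pair specialization-closed} together with Lemma \ref{localizing and colocalizing}). The only tiny subtlety worth flagging in the write-up is to remind the reader that $V(\fa)$ being specialization-closed is precisely what lets us invoke case (1) rather than needing $U_S$ to be generalization-closed; note in passing that $U_S$ is in fact generalization-closed as well, so hypothesis (2) applies equally, and the two applications agree by the uniqueness of adjoints.
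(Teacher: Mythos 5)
Your proposal is correct and takes precisely the route the paper intends: the paper simply cites $(\ref{usual localization})$, $(\ref{derived local homology})$, and Proposition \ref{general lambda} without writing out the details, and your write-up fills those details in exactly (applying case (1) of Proposition \ref{general lambda} since $V(\fa)$ is specialization-closed, then substituting the two identifications). Your side remark that $U_S$ is also generalization-closed, so case (2) applies equally, is a correct and harmless observation.
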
\vspace{4mm}
 
Since $V(\fp)\cap U(\fp)=\{\fp\}$ for $\fp\in \Spec R$, 
as a special case of this corollary, we have the following result.
 
\begin{corollary}\label{lambda p}
Let $\fp$ be a prime ideal of $R$. 
Then we have
$$\lambda^{\{\fp\}}
\cong \LLambda^{V(\fp)}( -\otimes_RR_\fp).
$$
\end{corollary}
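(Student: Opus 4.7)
The plan is to derive this as an immediate specialization of Corollary \ref{lambda 1}. First I would take $\fa = \fp$ (so $V(\fa) = V(\fp)$) and take the multiplicatively closed subset to be $S = R \setminus \fp$; by standard commutative algebra the associated set $U_S = \{\fq \in \Spec R \mid \fq \cap (R\setminus\fp) = \emptyset\}$ equals $U(\fp) = \{\fq \in \Spec R \mid \fq \subseteq \fp\}$, and the localization $S^{-1}R$ is exactly $R_\fp$.

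Next I would verify the key set-theoretic identity $V(\fp) \cap U(\fp) = \{\fp\}$: a prime $\fq$ lies in this intersection iff $\fp \subseteq \fq$ and $\fq \subseteq \fp$, forcing $\fq = \fp$. Applying Corollary \ref{lambda 1} with these choices then yields
\[
\lambda^{\{\fp\}} \;=\; \lambda^{V(\fp) \cap U_S} \;\cong\; \LLambda^{V(\fp)}\bigl((-) \otimes_R S^{-1}R\bigr) \;=\; \LLambda^{V(\fp)}\bigl((-) \otimes_R R_\fp\bigr),
\]
which is the claim.

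Since every step reduces to invoking the previous corollary and a one-line check on spectra, there is essentially no obstacle here; the statement is just the natural instance of the general formula when $\fa$ and $S$ are chosen compatibly at a single prime. The only thing one must be careful about is that the isomorphism in Corollary \ref{lambda 1} is a natural isomorphism of functors on $\cD$, so the resulting isomorphism $\lambda^{\{\fp\}} \cong \LLambda^{V(\fp)}((-)\otimes_R R_\fp)$ is likewise functorial, which is what is claimed.
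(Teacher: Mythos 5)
Your proof is correct and matches the paper's approach exactly: the paper likewise obtains Corollary \ref{lambda p} as the special case of Corollary \ref{lambda 1} with $\fa = \fp$ and $S = R\setminus\fp$, using $V(\fp)\cap U(\fp)=\{\fp\}$.
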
\vspace{4mm}

The next lemma follows from this corollary and Lemma \ref{characterization complete flat}.

\begin{lemma}\label{lambda p 2}
Let $\fp$ be a prime ideal of $R$ and $F$ be a flat $R$-module.
Then $\lambda^{\{\fp\}}F$ is isomorphic to $(\bigoplus_BR_\fp)^\wedge_\fp$,
where $\bigoplus_BR_\fp$ is the direct sum of $B$-copies of $R_\fp$ and $B=\dim_{\kappa(\fp)} F\otimes_R\kappa(\fp)$.
\end{lemma}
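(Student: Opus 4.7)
The plan is to reduce everything to Corollary \ref{lambda p} and Lemma \ref{characterization complete flat}, with Proposition \ref{not necessarily K-flat} used to avoid any derived-functor subtleties. By Corollary \ref{lambda p}, we have a natural isomorphism $\lambda^{\{\fp\}}F \cong \LLambda^{V(\fp)}(F\otimes_R R_\fp)$, so the question immediately reduces to understanding the $\fp$-adic derived completion of the flat $R_\fp$-module $F\otimes_R R_\fp$.

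First I would observe that since $F$ is $R$-flat, $F\otimes_R R_\fp$ is flat both as an $R$-module and, more importantly, as an $R_\fp$-module. Viewing it as a complex concentrated in degree $0$, Proposition \ref{not necessarily K-flat} gives $\LLambda^{V(\fp)}(F\otimes_R R_\fp) \cong \Lambda^{V(\fp)}(F\otimes_R R_\fp) = (F\otimes_R R_\fp)^{\wedge}_{\fp}$; and since $\fp$ and $\fp R_\fp$ generate the same ideal in $R_\fp$, this $\fp$-adic completion coincides with the $\fp R_\fp$-adic completion, i.e., with the completion of $F\otimes_R R_\fp$ with respect to the maximal ideal of the local ring $R_\fp$.

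Next I would apply Lemma \ref{characterization complete flat} to the local ring $(R_\fp,\fp R_\fp,\kappa(\fp))$ and the flat $R_\fp$-module $F\otimes_R R_\fp$. Its cardinal invariant is
\[
\dim_{\kappa(\fp)}\bigl((F\otimes_R R_\fp)/\fp R_\fp(F\otimes_R R_\fp)\bigr) = \dim_{\kappa(\fp)}(F\otimes_R \kappa(\fp)) = B,
\]
since tensoring over $R_\fp$ with $\kappa(\fp)$ is the same as tensoring over $R$ with $\kappa(\fp)$. The lemma then produces an isomorphism $(F\otimes_R R_\fp)^{\wedge} \cong (\bigoplus_B R_\fp)^{\wedge}$, completing the identification $\lambda^{\{\fp\}}F \cong (\bigoplus_B R_\fp)^{\wedge}_\fp$.

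There is no real obstacle here; the only point requiring a bit of care is the identification of $\Lambda^{V(\fp)}$ applied to an $R_\fp$-module with the completion taken in the category of $R_\fp$-modules, and the verification that the invariant $B$ computed over $R_\fp$ agrees with the one stated in the lemma over $R$. Both are routine.
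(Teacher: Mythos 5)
Your proposal is correct and matches the paper's own (very terse) justification, which is simply ``follows from Corollary \ref{lambda p} and Lemma \ref{characterization complete flat}.'' You spell out the intermediate steps the paper leaves implicit — in particular the reduction $\LLambda^{V(\fp)}(F\otimes_R R_\fp)\cong\Lambda^{V(\fp)}(F\otimes_R R_\fp)$, for which citing Proposition \ref{not necessarily K-flat} is fine though slightly more than needed, since a single flat module placed in degree $0$ is already $K$-flat — and the identification of the invariant $B$ over $R_\fp$ with the one stated over $R$.
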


\begin{remark}\label{not commutative}
If $W_1$ and $W_2$ are both specialization-closed or both generalization-closed, then Proposition \ref{general lambda} implies that $\lambda^{W_1}\lambda^{W_2}\cong\lambda^{W_2}\lambda^{W_1}$. However, in general, $\lambda^{W_1}$ and $\lambda^{W_2}$ need not commute.
For example, let $\fp, \fq\in \Spec R$ with $\fp\subsetneq \fq$.
Then $(\lambda^{\{\fp\}}R)\otimes_R\kappa(\fq)=\widehat{R_{\fp}}\otimes_R\kappa(\fq)=0$ and $(\lambda^{\{\fq\}}R)\otimes_R\kappa(\fp)=\widehat{R_{\fq}}\otimes_R\kappa(\fp)\neq 0$.
Then we see from Lemma \ref{lambda p 2}
that $\lambda^{\{\fq\}}\lambda^{\{\fp\}}R=0$ and $\lambda^{\{\fp\}}\lambda^{\{\fq\}}R\neq 0$.
\end{remark}\vspace{1mm}

Compare this remark with \cite[Example 3.5]{BIK}.
See also \cite[Remark 3.7 (ii)]{NY}.\medskip

Let $\fp$ be a prime ideal which is not maximal.
Then $\lambda^{\{\fp\}}$ is distinct from $\varLambda^{\fp}=\LLambda^{V(\fp)}\RHom_R(R_\fp,-)$, which is introduced in \cite{BIK3}.
To see this, let  $\fq$ be a prime ideal with $\fp\subsetneq \fq$. Then it holds that $\cosupp \widehat{R_\fq}=\{\fq\}\subseteq U(\fp)^c$.
Hence $\widehat{R_\fq}$ belongs to $\cC^{U(\fp)^c}$.
Then we have $\RHom_R(R_\fp,\widehat{R_\fq})=0$ since $R_\fp\in \cL_{U(\fp)}={}^\perp\cC^{U(\fp)^c}$ by (\ref{star}).
This implies that $\varLambda^{\fp}\widehat{R_\fq}=\LLambda^{V(\fp)}\RHom_R(R_\fp,\widehat{R_\fq})=0$, while $\lambda^{\{\fp\}}\widehat{R_\fq}\cong\lambda^{\{\fp\}}\lambda^{\{\fq\}}R\neq 0$ by Remark \ref{not commutative}.

Let $X\in \cD$, and write $\varGamma_\fp=\RGamma_{V(\fp)}(-\otimes_RR_\fp)$ (cf. \cite{BIK}). 
Recall that $\fp\in \supp X$ (resp. $\fp\in \cosupp X$) if and only if $\varGamma_\fp X\neq 0$ (resp. $\varLambda^{\fp}X\neq 0$), see \cite[Theorem 2.1, Theorem 4.1]{FI} and \cite[\S 4]{BIK3}.
In contrast, $\fp\in \cosupp X$ (resp. $\fp\in \supp X$) if and only if $\gamma_{\{\fp\}}X\neq 0$ (resp. $\lambda^{\{\fp\}} X\neq 0$), by Lemma \ref{basic lemma}. 
Here $\gamma_{\{\fp\}}\cong \RGamma_{V(\fp)}\RHom_R(R_\fp,-)$ by \cite[Corollary 3.3]{NY}.
See also \cite[Proposition 3.6, Proposition 4.4]{SW}.
\medskip

Let $W$ be a subset of $\Spec R$. We denote by $\dim W$ the supremum of lengths of chains of distinct prime ideals in $W$ (cf. \cite[Definition 3.6]{NY}).

\begin{theorem}\label{lambda dim W=0}
Let $W$ be a subset of $\Spec R$. We assume that $\dim W=0$. Then there are isomorphisms
$$\lambda^W\cong \prod_{\fp \in W}\lambda^{\{\fp\}}\cong \prod_{\fp \in W}\LLambda^{V(\fp)}( -\otimes_RR_\fp).$$
\end{theorem}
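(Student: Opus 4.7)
The second isomorphism follows factor by factor from Corollary \ref{lambda p}, so the real content is the first. My plan is to set $Y := \prod_{\fp \in W}\lambda^{\{\fp\}}X$ and identify it with $\lambda^W X$ via Lemma \ref{basic lemma}, which requires (i) $Y \in \cC^W$, and (ii) the fibre $C$ of the canonical map $\eta : X \to Y$ (whose $\fp$-th component is $\eta^{\{\fp\}}X$) lies in $\cL_{W^c}$.

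For (i), the cosupport of a product is the union of the cosupports of its factors because $\RHom_R(-,-)$ sends products in the second variable to products, so $\cosupp Y \subseteq W$. For (ii), Remark \ref{characterization perp}(i) reduces the task to showing $\lambda^{\{\fq\}}C = 0$ for every $\fq \in W$. Fix such a $\fq$. Because $\dim W = 0$, no prime of $W$ properly contains $\fq$, so $\{\fq\}$ is specialization-closed in $W$; Corollary \ref{perp cosupport ver}, applied with $W_0 = \{\fq\}$ and $W_1 = W \setminus \{\fq\}$, then yields $\cC^{W \setminus \{\fq\}} \subseteq {}^\perp \cC^{\{\fq\}}$. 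The same cosupport-of-product computation as in (i) places $Y' := \prod_{\fp \in W,\, \fp \neq \fq}\lambda^{\{\fp\}}X$ in $\cC^{W \setminus \{\fq\}}$, hence in ${}^\perp\cC^{\{\fq\}}$, so $\lambda^{\{\fq\}}Y' = 0$. Applying the triangulated functor $\lambda^{\{\fq\}}$ to the split distinguished triangle $Y' \to Y \xrightarrow{\pi_\fq} \lambda^{\{\fq\}}X \to Y'[1]$ therefore makes $\lambda^{\{\fq\}}\pi_\fq$ into an isomorphism. Since $\pi_\fq \circ \eta = \eta^{\{\fq\}}X$ by construction and $\lambda^{\{\fq\}}\eta^{\{\fq\}}X$ is invertible, the map $\lambda^{\{\fq\}}\eta$ is also an isomorphism, forcing $\lambda^{\{\fq\}}C = 0$. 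Letting $\fq$ vary over $W$ gives $C \in \cL_{W^c}$, and Lemma \ref{basic lemma} identifies $Y$ with $\lambda^W X$.

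The step I expect to be the main obstacle is precisely the vanishing of $\lambda^{\{\fq\}}$ on the infinite product $Y'$: left adjoints such as $\lambda^{\{\fq\}}$, and equivalently the functor $(-) \Lotimes \kappa(\fq)$, need not commute with products, so one cannot deduce this vanishing in a naive way from the factorwise identities $\lambda^{\{\fq\}}\lambda^{\{\fp\}}X = 0$ (which themselves follow from Remark \ref{general remark lambda}, as distinct primes of $W$ are incomparable). The role of the anti-chain hypothesis $\dim W = 0$, combined with Corollary \ref{perp cosupport ver}, is precisely to bypass this difficulty: it produces the vanishing at the level of the whole product via purely cosupport-theoretic reasoning rather than any commutation of functors with products.
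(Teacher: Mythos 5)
Your proof is correct and follows essentially the same route as the paper's: both construct the canonical map $X \to \prod_{\fp\in W}\lambda^{\{\fp\}}X$, use the hypothesis $\dim W = 0$ to see that each $\{\fq\}$ is specialization-closed in $W$, invoke Corollary \ref{perp cosupport ver} to get $\prod_{\fp\neq\fq}\lambda^{\{\fp\}}X \in {}^\perp\cC^{\{\fq\}}$, and then conclude via Lemma \ref{basic lemma} after checking $\lambda^{\{\fq\}}$-vanishing of the third vertex. Your closing observation about why the cosupport argument is needed to handle the infinite product is a fair diagnosis of the key point, and it matches the role that Corollary \ref{perp cosupport ver} plays in the paper's argument.
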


\begin{proof}
Let $X\in \cD$, and consider the natural morphisms $\eta^{\{\fp\}}X:X\to \lambda^{\{\fp\}}X$ for $\fp\in W$.
Take the product of the morphisms, and we obtain a morphism $f:X\to \prod_{\fp\in W}\lambda^{\{\fp\}}X$. 
Embed $f$ into a triangle
$$
\begin{CD}
C@>>>X@>f>>\disp{\prod_{\fp\in W}\lambda^{\{\fp\}}X}@>>>C[1].
\end{CD}
$$
Note that $\prod_{\fp\in W}\lambda^{\{\fp\}}X\in \cC^W$.
We have to prove that $C\in {}^\perp\cC^W$.
For this purpose, take any prime ideal $\fq\in W$. 
Then $\{\fq\}$ is specialization-closed in $W$, because $\dim W=0$. 
Hence we have $\prod_{\fp\in W\backslash\{\fq\}}\lambda^{\{\fp\}}X\in \cC^{W\backslash \{\fq\}}\subseteq {}^\perp\cC^{\{\fq\}}$, by Corollary \ref{perp cosupport ver}.
Thus an isomorphism $\lambda^{\{\fq\}}(\prod_{\fp\in W}\lambda^{\{\fp\}}X)\cong \lambda^{\{\fq\}}X$ holds.
Then it is seen from the triangle above that $\lambda^{\{\fq\}}C=0$ for all $\fq\in W$, so that $C\in {}^\perp\cC^W$, see Remark \ref{characterization perp} (i). 
Therefore Lemma \ref{basic lemma} yields $\lambda^WX\cong \prod_{\fp\in W}\lambda^{\{\fp\}}X$. 
The second isomorphism in the theorem follows from  Corollary \ref{lambda p}.
\end{proof}

\begin{example}\label{example lambda infinite}
Let $W$ be a subset of $\Spec R$ such that $W$ is an infinite set with $\dim W=0$.
Let $X^{\{\fp\}}$ be a complex with $\cosupp X^{\{\fp\}}=\{\fp\}$ for each $\fp\in W$.
We take $\fp\in W$.
Since $\dim W=0$, it holds that $X^{\{\fq\}}\in \cC^{V(\fp)^c}$ for any $\fq\in W\backslash\{\fp\}$. 
Furthermore, Lemma \ref{localizing and colocalizing} implies that $\cC^{V(\fp)^c}$ is equal to ${}^\perp\cC^{V(\fp)}$, which is closed under arbitrary direct sums.
Thus it holds that $\bigoplus_{\fq\in W\backslash \{\fp\}}X^{\{\fq\}}\in \cC^{V(\fp)^c}={}^\perp\cC^{V(\fp)}\subseteq {}^\perp\cC^{\{\fp\}}$.
Therefore, setting $Y=\bigoplus_{\fp\in W}X^{\{\fp\}}$, we have
$\lambda^{\{\fp\}}Y\cong X^{\{\fp\}}$.
It then follows from Theorem \ref{lambda dim W=0} that  
$$\lambda^WY\cong \prod_{\fp\in W}\lambda^{\{\fp\}}Y\cong \prod_{\fp\in W}X^{\{\fp\}}.$$
Under this identification, the natural morphism $Y\to \lambda^WY$ coincides with the canonical morphism $\bigoplus_{\fp\in W}X^{\{\fp\}}\to \prod_{\fp\in W}X^{\{\fp\}}$.
\end{example}

\begin{remark}
Let $W$, $X^{\{\fp\}}$ be as in Example \ref{example lambda infinite}, and suppose that each $X^{\{\fp\}}$ is an $R$-module. Then $\bigoplus_{\fp\in W}X^{\{\fp\}}$ is not in $\cC^W$, because the natural morphism $\bigoplus_{\fp\in W}X^{\{\fp\}}\to \lambda^W(\bigoplus_{\fp\in W}X^{\{\fp\}})$ is not an isomorphism.
Hence the cosupport of $\bigoplus_{\fp\in W}X^{\{\fp\}}$ properly contains $W$.
In particular, setting $X^{\{\fp\}}=\kappa(\fp)$, we have $W\subsetneq \cosupp \bigoplus_{\fp\in W}\kappa(\fp)$. Similarly, we can prove that $W\subsetneq \supp \prod_{\fp\in W}\kappa(\fp)$.
The first author noticed these facts through discussion with Srikanth Iyengar.

It is possible to give another type of examples, by which we also see that a colocalizing subcategory of $\cD$ is not necessarily closed under arbitrary direct sums.
Suppose that $(R,\fm)$ is a complete local ring with $\dim R\geq 1$.
Then we have $R\cong \widehat{R} \in \cC^{V(\fm)}$.
However the free module $\bigoplus_{\mathbb{N}} R$ is  never $\fm$-adically complete, so that $\bigoplus_{\mathbb{N}} R$ is not isomorphic to $\lambda^{V(\fm)}(\bigoplus_{\mathbb{N}}R)$.
Hence $\bigoplus_{\mathbb{N}} R$ is not in $\cC^{V(\fm)}$. 
\end{remark}

For a subset $W$ of $\Spec R$, $\overline{W}^g$ denotes the generalization closure of $W$, which is the smallest generalization-closed subset of $\Spec R$ containing $W$.
In addition, for a subset $W_1\subseteq W$, we say that $W_1$ is generalization-closed in $W$ if $W\cap U(\fp)\subseteq W_1$ for any $\fp\in W_1$. This is equivalent to saying that $W\cap \overline{W_1}^g=W_1$.

We extend Proposition \ref{general lambda} to the following corollary, which will be used in a main theorem of this section.

\begin{corollary}\label{composition 2}
Let $W_0$ and $W_1$ be arbitrary subsets of $\Spec R$.
Suppose that one of the following conditions hold:
\begin{itemize}
\item[\rm (1)] $W_0$ is specialization-closed in $W_0\cup W_1$;
\item[\rm (2)] $W_1$ is generalization-closed in $W_0\cup W_1$.
\end{itemize}
Then we have an isomorphism
$$\lambda^{W_0}\lambda^{W_1}\cong \lambda^{W_0\cap W_1}.$$
\end{corollary}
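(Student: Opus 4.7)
The plan is to bootstrap from the version already proved in Proposition~\ref{general lambda}, where one of the two sets is specialization- or generalization-closed in all of $\Spec R$, by combining it with the factorizations of $\lambda^{W_0}\lambda^{W_1}$ recorded in Remark~\ref{general remark lambda} and the idempotence $\lambda^{W_0}\lambda^{W_0\cap W_1}\cong \lambda^{W_0\cap W_1}$ coming from Remark~\ref{characterization perp}(ii).

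First I would unwind the hypothesis in case (1): $W_0$ being specialization-closed in $W_0\cup W_1$ means $\overline{W_0}^s\cap(W_0\cup W_1)=W_0$, so intersecting with $W_1$ gives the set-theoretic identity $\overline{W_0}^s\cap W_1 = W_0\cap W_1$ (the inclusion $W_0\cap W_1\subseteq \overline{W_0}^s\cap W_1$ being automatic). The first line of Remark~\ref{general remark lambda} then rewrites $\lambda^{W_0}\lambda^{W_1}$ as $\lambda^{W_0}\lambda^{\overline{W_0}^s\cap W_1} = \lambda^{W_0}\lambda^{W_0\cap W_1}$, which collapses to $\lambda^{W_0\cap W_1}$ because $W_0\cap W_1\subseteq W_0$. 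Case (2) proceeds symmetrically using the second line of Remark~\ref{general remark lambda}: the generalization-closedness of $W_1$ in $W_0\cup W_1$ forces $W_0\cap\overline{W_1}^g = W_0\cap W_1$, and then
\[
\lambda^{W_0}\lambda^{W_1}\cong \lambda^{W_0\cap\overline{W_1}^g}\lambda^{W_1}=\lambda^{W_0\cap W_1}\lambda^{W_1}\cong \lambda^{W_0\cap W_1}.
\]

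I do not anticipate a genuine obstacle, since all of the triangulated-category input is already packaged in Proposition~\ref{general lambda}; the only thing worth checking carefully is the elementary set-theoretic step translating the relative closure hypothesis into the intersection identity $\overline{W_0}^s\cap W_1=W_0\cap W_1$ (respectively $W_0\cap\overline{W_1}^g=W_0\cap W_1$).
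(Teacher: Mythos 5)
Your argument is correct and follows essentially the same route as the paper: both derive the set-theoretic identity $\overline{W_0}^s\cap W_1 = W_0\cap W_1$ (resp.\ $W_0\cap\overline{W_1}^g = W_0\cap W_1$) from the relative closure hypothesis and then invoke Remark~\ref{general remark lambda} together with Remark~\ref{characterization perp}(ii). Your write-up is simply a more explicit unwinding of the paper's terser presentation.
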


\begin{proof}
Set $W=W_0\cup W_1$.
By the assumption, we have $$\overline{W_0}^s\cap W=W_0\  \text{ or }\  W\cap \overline{W_1}^g=W_1.$$
Therefore, it holds that 
$$\overline{W_0}^s\cap W_1=W_0\cap W_1\  \text{ or }\ W_0\cap \overline{W_1}^g=W_0\cap W_1.$$
Hence this proposition follows from Remark \ref{general remark lambda} and Remark \ref{characterization perp} (ii). 
\end{proof}\vspace{1mm}

\begin{remark}\label{remark for main result 1}
(i) Let $W_0$ and $W$ be subsets of $\Spec R$ with $W_0\subseteq W$.
Under the isomorphism $\lambda^{W_0}\lambda^W\cong\lambda^{W_0}$ by Remark \ref{characterization perp} (ii), there is a morphism $\eta^{W_0}\lambda^{W}:\lambda^{W}\to \lambda^{W_0}$.

(ii) Let $W_0$ and $W_1$ be subsets of $\Spec R$.
Let $X\in \cD$.
Since $\eta^{W_1} :\id_{\cD}\to \lambda^{W_1}$ is a morphism of functors, there is a commutative diagram of the following form: 
$$\begin{CD}
X@>\eta^{W_0}X>>\lambda^{W_0}X\\
@VV\eta^{W_1}XV@VV\eta^{W_1}\lambda^{W_0}XV\\
\lambda^{W_1}X@>\lambda^{W_1}\eta^{W_0}X>>\lambda^{W_1}\lambda^{W_0}X
\end{CD}$$
\end{remark}\vspace{4mm}

Now we prove the following result, which is the main theorem of this section.

\begin{theorem}\label{main result 1}
Let $W$, $W_0$ and $W_1$  be subsets of $\Spec R$ with $W=W_0\cup W_1$.
Suppose that one of the following conditions holds:
\begin{itemize}
\item[\rm (1)] $W_0$ is specialization-closed in $W$;
\item[\rm (2)] $W_1$ is generalization-closed in $W$.
\end{itemize}
Then, for any $X\in \cD$, there is a triangle of the following form;
$$\begin{CD}
\lambda^{W} X@>f>>\lambda^{W_1}X\oplus\lambda^{W_0} X @>g>>  \lambda^{W_1}\lambda^{W_0} X @>>> \lambda^{W} X[1],
\end{CD}$$
where $f$ and $g$ are morphisms represented by the following matrices; 
$$f=\left(\begin{array}{c}
\eta^{W_1}\lambda^{W}X\\[7pt]  \eta^{W_0}\lambda^WX
\end{array}\right),\ \    
g=\left(\begin{array}{cc}\lambda^{W_1}\eta^{W_0}X\ \ \   
(-1)\cdot \eta^{W_1}\lambda^{W_0}X
\end{array}\right).$$
\end{theorem}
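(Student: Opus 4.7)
The plan is to embed $f$ into a distinguished triangle
\[
\lambda^W X \xrightarrow{f} \lambda^{W_1}X \oplus \lambda^{W_0}X \xrightarrow{p} Z \xrightarrow{q} \lambda^W X[1]
\]
and to identify the third term $Z$ canonically with $\lambda^{W_1}\lambda^{W_0}X$ via a morphism $h$ produced by the vanishing $gf = 0$. The commutativity of the square in Remark~\ref{remark for main result 1}(ii)---an immediate consequence of naturality of $\eta^{W_1}$ applied to $\eta^{W_0}\lambda^W X$ together with the identifications $\lambda^{W_i}\lambda^W\cong \lambda^{W_i}$ from Remark~\ref{characterization perp}(ii)---gives $gf = 0$ with the prescribed signs, and the defining property of the cone yields a (non-unique) morphism $h\colon Z\to \lambda^{W_1}\lambda^{W_0}X$ with $hp=g$. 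The task is to show $h$ is an isomorphism.

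I would verify this through two complementary facts. First, applying $\lambda^{W_1}$ to the defining triangle: because $\lambda^{W_1}\lambda^W\cong \lambda^{W_1}$ and $\lambda^{W_1}\eta^{W_1}$ is invertible (as $\lambda^{W_1}$ is a localization), after canonical identifications the first component of $\lambda^{W_1}f$ becomes the identity, so $\lambda^{W_1}f$ is a split monomorphism whose cone is $\lambda^{W_1}\lambda^{W_0}X$. Hence $\lambda^{W_1}Z\cong \lambda^{W_1}\lambda^{W_0}X$, and tracing through confirms $\lambda^{W_1}h$ realizes this isomorphism. Second, I would show $Z\in \cC^{W_1}$, so that the unit $Z\to \lambda^{W_1}Z$ is itself an isomorphism; combining the two yields $Z\cong \lambda^{W_1}\lambda^{W_0}X$ compatibly with $h$.

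The crux is establishing $Z\in \cC^{W_1}$, and this is where condition~(1) (or symmetrically~(2), which instead delivers $Z\in \cC^{W_0}$) enters essentially. Working under condition~(1), set $V=\overline{W_0}^s$, so that $W_0=V\cap W$ and $W\setminus W_0=V^c\cap W\subseteq W_1$. By Lemma~\ref{basic lemma} applied to $\lambda^W X$ with the specialization-closed set $V$ and Remark~\ref{general remark lambda}, there is a triangle
\[
\gamma_V\lambda^W X\to \lambda^W X\to \lambda^{W\setminus W_0}X\to \gamma_V\lambda^W X[1].
\]
Applying $\lambda^{W_0}$ and invoking Corollary~\ref{composition 2} (whose hypothesis ``$W_0$ specialization-closed in $W_0\cup(W\setminus W_0)=W$'' is exactly condition~(1)) gives $\lambda^{W_0}\lambda^{W\setminus W_0}X=0$, and hence $\lambda^{W_0}\gamma_V\lambda^W X\cong \lambda^{W_0}X$. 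To promote this to $\gamma_V\lambda^W X\cong \lambda^{W_0}X$, observe that under condition~(1) one has $V\cap W_0^c=V\cap W^c$, so $\gamma_{W_0^c}\gamma_V\lambda^W X\in \cL_{V\cap W^c}\subseteq \cL_{W^c}$. By the adjunction $(i_V,\gamma_V)$, $\Hom_\cD(L,\gamma_V\lambda^W X)\cong \Hom_\cD(L,\lambda^W X)$ for any $L\in \cL_V$, which vanishes when $L\in \cL_{W^c}$ since $\lambda^W X\in \cL_{W^c}^\perp$. The counit $\gamma_{W_0^c}\gamma_V\lambda^W X\to \gamma_V\lambda^W X$ therefore vanishes, and the adjunction $(i_{W_0^c},\gamma_{W_0^c})$ forces $\gamma_{W_0^c}\gamma_V\lambda^W X=0$. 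Substituting back yields a natural triangle
\[
\lambda^{W_0}X\to \lambda^W X\to \lambda^{W\setminus W_0}X\to \lambda^{W_0}X[1].
\]
Applying $\gamma_{W_1^c}$ kills the third term (as $W\setminus W_0\subseteq W_1$), producing an isomorphism that by naturality is inverse to $\gamma_{W_1^c}(\eta^{W_0}\lambda^W X)$. Finally, $\gamma_{W_1^c}$ applied to the defining triangle for $Z$, using $\gamma_{W_1^c}\lambda^{W_1}X=0$, identifies the induced map $\gamma_{W_1^c}\lambda^W X\to \gamma_{W_1^c}\lambda^{W_0}X$ with that isomorphism, forcing $\gamma_{W_1^c}Z=0$ as required.

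The main obstacle is the intermediate identification $\gamma_V\lambda^W X\cong \lambda^{W_0}X$, where condition~(1) is used essentially through Corollary~\ref{composition 2}. Without either hypothesis the cosupport of $\gamma_V\lambda^W X$ can extend outside $W_0$ and the stated triangle fails in general: for instance, over a discrete valuation ring with $W_0=\{(0)\}$ and $W_1=\{\fm\}$ both conditions fail, and a direct computation shows $\lambda^W R\not\cong \lambda^{W_0}R\oplus \lambda^{W_1}R$ as would be forced (since $\lambda^{W_1}\lambda^{W_0}R=0$ in that setting by Remark~\ref{not commutative}).
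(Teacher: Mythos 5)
Your overall plan---embedding $f$ (rather than $g$, as in the paper) into a triangle with third term $Z$, obtaining $h\colon Z\to\lambda^{W_1}\lambda^{W_0}X$ from $gf=0$, and then proving $h$ is an isomorphism by combining the fact that $\lambda^{W_1}h$ is an isomorphism with the claim $Z\in\cC^{W_1}$---is a legitimate variant of the published argument. The split-triangle check that $\lambda^{W_1}h$ is an isomorphism is sound. The problem is the argument for $Z\in\cC^{W_1}$, which contains a genuine gap at the claim
$$\gamma_{W_0^c}\gamma_V\lambda^W X\in \cL_{V\cap W^c}.$$
This requires $\gamma_{W_0^c}\gamma_V\lambda^W X\in\cL_V$, which you appear to take for granted. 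But $\gamma_{W_0^c}$ need not preserve $\cL_V$: the subcategory $\cL_V$ is localizing but in general not colocalizing, and Proposition~\ref{general gamma} does not apply to the pair $(W_0^c,V)$, since under condition (1) one has $W_0^c\cup V=\Spec R$ while neither $W_0^c$ is specialization-closed nor $V$ generalization-closed. In fact the intermediate conclusion you are driving at, namely $\gamma_V\lambda^W X\cong\lambda^{W_0}X$, is false.

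A concrete counterexample: let $R=k[[x,y]]$, $\fp=(x)$, $Q=\mathrm{Frac}(R)$, and set $W_0=\{\fp\}$, $W_1=\{(0)\}$, $W=\{\fp,(0)\}$, $X=R$. Condition (1) holds since $\overline{W_0}^s=V(\fp)$ and $V(\fp)\cap W=\{\fp\}=W_0$. One checks directly that $\lambda^W R\cong R_\fp$: indeed $\cosupp R_\fp=\{\fp,(0)\}=W$ and the cone of $R\to R_\fp$ lies in $\cL_{W^c}$. Then with $V=V(\fp)$,
$$\gamma_V\lambda^W R=\RGamma_{V(\fp)}R_\fp\cong (Q/R_\fp)[-1],$$
which is concentrated in cohomological degree $1$; this is not isomorphic to $\lambda^{W_0}R\cong\widehat{R_\fp}$, which sits in degree $0$. (Note also $Q/R_\fp\cong E_R(R/\fp)$, so $\cosupp\RGamma_{V(\fp)}R_\fp=U(\fp)=\{\fp,(0)\}\not\subseteq W_0$; hence $\gamma_V\lambda^W X\notin\cC^{W_0}$ and the counit $\gamma_{W_0^c}\gamma_V\lambda^W X\to\gamma_V\lambda^W X$ is not zero.) The ``substituted'' triangle $\lambda^{W_0}X\to\lambda^W X\to\lambda^{W\setminus W_0}X\to$ therefore does not exist, and the chain of deductions leading to $\gamma_{W_1^c}Z=0$ collapses.

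The paper sidesteps $\gamma_V\lambda^W X$ entirely: it embeds $g$ and shows the comparison map is inverted by \emph{both} $\lambda^{W_0}$ and $\lambda^{W_1}$ (the $\lambda^{W_0}$ case using Corollary~\ref{composition 2}), after which the cofiber lies in $\cC^W\cap{}^\perp\cC^W=\{0\}$ by Remark~\ref{characterization perp}. Your symmetric setup can be salvaged by the same device: instead of trying to prove $Z\in\cC^{W_1}$, check that $\lambda^{W_0}h$ is also an isomorphism (applying $\lambda^{W_0}$ to the defining triangle splits by Corollary~\ref{composition 2}), and then conclude that the cofiber of $h$ vanishes because it lies in $\cC^W\cap{}^\perp\cC^W$.
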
\vspace{2mm}

\begin{proof}
We embed the morphism $g$ into a triangle
$$\begin{CD}
C@>a>>\lambda^{W_1}X\oplus\lambda^{W_0}X@>g>>\lambda^{W_1}\lambda^{W_0}X@>>>C[1].
\end{CD}$$
Notice that $C\in \cC^W$ since $\cC^{W_0}, \cC^{W_1}\subseteq \cC^W$.
By Remark \ref{remark for main result 1}, it is easily seen that $g\cdot f=0$. Thus there is a morphism $b: \lambda^WX\to C$ making the following diagram commutative:
\begin{align}
\begin{CD}
\lambda^WX@=\lambda^WX@>>>0@>>> \lambda^WX[1]\\
@VVbV@VVfV@VVV@VVb[1]V\\
C@>a>> \lambda^{W_1}X\oplus\lambda^{W_0} X @>g>>  \lambda^{W_1}\lambda^{W_0}X@>>> C[1]
\end{CD}\label{main thm diagram1}
\end{align}
We only have to show that $b$ is an isomorphism.
To do this, embedding the morphism $b$ into a triangle
\begin{align}
\begin{CD}
Z@>>>\lambda^WX@>b>>C@>>>Z[1],
\end{CD}\label{main thm diagram2}
\end{align}
we prove that $Z=0$.
Since $\lambda^WX, C\in \cC^W$, $Z$ belongs to  $\cC^W$. 
Hence it suffices to show that $Z\in {}^\perp\cC^W$.

First, we prove that $\lambda^{W_1}b$ is an isomorphism. 
We employ a similar argument to \cite[Theorem 7.5]{BIK}.
Consider the following sequence 
\begin{align}
\begin{CD}
\lambda^{W}X@>f>>\lambda^{W_1}X\oplus\lambda^{W_0}X@>g>>\lambda^{W_1}\lambda^{W_0}X,
\end{CD}\label{main thm diagram3}
\end{align}
and apply $\lambda^{W_1}$ to it.
Then we obtain a sequence which can be completed to a split triangle.
The triangle appears in the first row of the diagram below. 
Moreover, $\lambda^{W_1}$ sends the second row of the diagram (\ref{main thm diagram1}) to a split triangle, which appears in the second row of the the diagram below.
$$\begin{CD}
\lambda^{W_1}X@>\lambda^{W_1}f>>\lambda^{W_1}X\oplus\lambda^{W_1}\lambda^{W_0}X@>\lambda^{W_1}g>>\lambda^{W_1}\lambda^{W_0}X@>0>>\lambda^{W_1}X[1]\\
@VV\lambda^{W_1}bV@|@|@VV\lambda^{W_1}b[1]V\\
\lambda^{W_1}C@>\lambda^{W_1}a>>\lambda^{W_1}X\oplus\lambda^{W_1}\lambda^{W_0}X@>\lambda^{W_1}g>>\lambda^{W_1}\lambda^{W_0}X@>0>>\lambda^{W_1}C[1]
\end{CD}$$
Since this diagram is commutative, we conclude that $\lambda^{W_1} b$ is an isomorphism.

Next, we prove that $\lambda^{W_0}b$ is an isomorphism. 
Thanks to Corollary \ref{composition 2}, we are able to follow the same process as above.
In fact, the corollary implies that $\lambda^{W_0}\lambda^{W_1}\cong \lambda^{W_0\cap W_1}$.
Thus, applying $\lambda^{W_0}$ to the sequence (\ref{main thm diagram3}), we obtain a sequence which can be completed into a split triangle.
Furthermore, $\lambda^{W_0}$ sends the second row of the diagram (\ref{main thm diagram1}) to a split triangle.
Consequently we see that there is a morphism of triangles:
$$\begin{CD}
\lambda^{W_0}X@>\lambda^{W_0}f>>\lambda^{W _0\cap W_1}X\oplus \lambda^{W_0}X@>\lambda^{W_0}g>> \lambda^{W_0\cap W_1}X
@>0>>\lambda^{W_0}X[1]\\
@VV\lambda^{W_0}bV@|@|@VV\lambda^{W_0}b[1]V\\
\lambda^{W_0}C@>\lambda^{W_0}a>> \lambda^{W_0\cap W_1}X\oplus\lambda^{W_0} X @>\lambda^{W_0}g>>  \lambda^{W_0\cap W_1}X@>0>> \lambda^{W_0}C[1]
\end{CD}$$
Therefore $\lambda^{W_0}b$ is an isomorphism.

Since we have shown that $\lambda^{W_0}b$ and $\lambda^{W_1}b$ are isomorphisms, it follows from the triangle (\ref{main thm diagram2}) that $\lambda^{W_0}Z=\lambda^{W_1}Z=0$.
Thus we have $Z\in {}^\perp\cC^W$ by Remark \ref{characterization perp} (i).
\end{proof}

\begin{remark}\label{remark for the natural morphism}
Let $f$, $g$ and $a$ as above.
Let $h:X\to \lambda^{W_1}X\oplus\lambda^{W_0} X$ be a morphism induced by  
$\eta^{W_1}X$ and $\eta^{W_0}X$.
Then $g\cdot h=0$ by Remark \ref{remark for main result 1} (ii). Hence there is a morphism $b':X\to C$ such that the following diagram is commutative:
$$\begin{CD}
X@=X@>>>0@>>> X[1]\\
@VVb'V@VVhV@VVV@VVb'[1]V\\
C@>a>> \lambda^{W_1}X\oplus\lambda^{W_0} X @>g>>  \lambda^{W_1}\lambda^{W_0}X@>>> C[1].
\end{CD}$$
We can regard any morphism $b'$ making this diagram commutative as the natural morphism $\eta^{W}X$.
In fact, since $\lambda^{W}h=f$, applying $\lambda^{W}$ to this diagram, and setting $\lambda^{W}b'=b$, we obtain the diagram (\ref{main thm diagram1}).
Note that $b\cdot \eta^WX=b'$. Moreover, the above proof implies that $b:\lambda^{W}X\to C$ is an isomorphism. 
Thus we can identify $b'$ with $\eta^WX$ under the isomorphism $b$.
\end{remark}

We give some examples of Theorem \ref{main result 1}.

\begin{example}\label{examples Mayer-Vietoris}
(1) Let $x$ be an element of $R$.
Recall that $\lambda^{V(x)}\cong \LLambda^{V(x)}$ by (\ref{derived local homology}).
We put $S=\Set{x^n |n\geq 0}$.
Since $V(x)^c=U_S$, it holds that $\lambda^{V(x)^c}=\lambda^{U_S}\cong (-)\otimes_RR_x$ by (\ref{usual localization}). 
Set $W=\Spec R$, $W_0=V(x)$ and $W_1=V(x)^c$.
Then the theorem yields the following triangle 
$$\begin{CD}
R@>>>R_x\oplus R^\wedge_{(x)}@>>>(R^\wedge_{(x)})_x@>>>R[1].
\end{CD}$$
(2) Suppose that $(R,\fm)$ is a local ring with $\fp\in \Spec R$ and having $\dim R/\fp=1$.
Setting $W=V(\fp)$, $W_0=V(\fm)$ and $W_1=\{\fp\}$,  
we see from the theorem and Corollary \ref{lambda p} that there is a short exact sequence
$$\begin{CD}
0@>>>R^\wedge_{\fp}@>>>\widehat{R_{\fp}}\oplus \widehat{R}@>>>\widehat{(\widehat{R})_\fp}@>>>0.
\end{CD}$$
Actually, this gives a pure-injective resolution of $R^\wedge_{\fp}$, see Section 9.
Moreover, if $R$ is a 1-dimensional local domain with quotient field $Q$, then this short exact sequence is of the form
$$\begin{CD}
0@>>>R@>>>Q \oplus \widehat{R}@>>> \widehat{R}\otimes_RQ@>>> 0.
\end{CD}
$$
\end{example}\vspace{5mm}

By similar arguments to Proposition \ref{general lambda} and Corollary \ref{composition 2}, one can prove the following proposition, which is a generalized form of \cite[Proposition 3.1]{NY}.

\begin{proposition}\label{general gamma}
Let $W_0$ and $W_1$ be arbitrary subsets of $\Spec R$.
Suppose that one of the following conditions hold:
\begin{itemize}
\item[\rm (1)] $W_0$ is specialization-closed in $W_0\cup W_1$;
\item[\rm (2)] $W_1$ is generalization-closed in $W_0\cup W_1$.
\end{itemize}
Then we have an isomorphism
$$\gamma_{W_0}\gamma_{W_1}\cong \gamma_{W_0\cap W_1}.$$ 
\end{proposition}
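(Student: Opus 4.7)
The plan is to mirror the derivation of Corollary \ref{composition 2} from Proposition \ref{general lambda}, systematically replacing $\lambda$-localizations by $\gamma$-colocalizations. First I would record the dual of Remark \ref{characterization perp}\,(ii): for any inclusion $W_0\subseteq W$ of subsets of $\Spec R$,
\[
\gamma_{W_0}\gamma_W \cong \gamma_{W_0} \cong \gamma_W\gamma_{W_0},
\]
which is immediate from composing the right adjoints of the fully faithful chain $\cL_{W_0}\hookrightarrow \cL_W\hookrightarrow \cD$ and from the fact that $\gamma_W Y\to Y$ is invertible for $Y\in\cL_W$. The corresponding absolute statement, namely $\gamma_V\gamma_U\cong\gamma_{V\cap U}$ whenever $V$ is specialization-closed or $U$ is generalization-closed, is \cite[Proposition 3.1]{NY}.

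Combining these two ingredients yields the following analog of Remark \ref{general remark lambda}: for arbitrary $W_0,W_1\subseteq \Spec R$,
\[
\gamma_{W_0}\gamma_{W_1} \cong \gamma_{W_0}\gamma_{\overline{W_0}^s}\gamma_{W_1} \cong \gamma_{W_0}\gamma_{\overline{W_0}^s\cap W_1},
\]
\[
\gamma_{W_0}\gamma_{W_1} \cong \gamma_{W_0}\gamma_{\overline{W_1}^g}\gamma_{W_1} \cong \gamma_{W_0\cap\overline{W_1}^g}\gamma_{W_1},
\]
where the inner isomorphisms apply the first ingredient to $W_0\subseteq\overline{W_0}^s$ and $W_1\subseteq\overline{W_1}^g$ respectively, and the outer isomorphisms apply the absolute version with $\overline{W_0}^s$ specialization-closed (resp.\ $\overline{W_1}^g$ generalization-closed).

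To conclude, I would argue exactly as in Corollary \ref{composition 2}. Under hypothesis (1), $W_0$ being specialization-closed in $W_0\cup W_1$ forces $\overline{W_0}^s\cap W_1=W_0\cap W_1$; substituting into the first chain yields $\gamma_{W_0}\gamma_{W_1}\cong\gamma_{W_0}\gamma_{W_0\cap W_1}$, which collapses to $\gamma_{W_0\cap W_1}$ by the first ingredient applied to $W_0\cap W_1\subseteq W_0$. Hypothesis (2) is entirely symmetric via the second chain, using $W_0\cap\overline{W_1}^g=W_0\cap W_1$.

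The main obstacle, which is hidden in the cited \cite[Proposition 3.1]{NY}, is the generalization-closed half of the absolute version: when $V$ is arbitrary, $\gamma_V$ need not be smashing, so one cannot simply reduce to the generators $\kappa(\fp)$ of $\cL_U$ as in the specialization-closed case. The argument must instead pass through the characterization $Y\in\cL_U \Leftrightarrow \lambda^{U^c}Y=0$ (valid because $U^c$ is specialization-closed) and exploit the adjoint pair $(\gamma_{U^c},\lambda^{U^c})$ of Remark \ref{adjoint pair specialization-closed}, in direct parallel with the use of Corollary \ref{perp cosupport ver} in the proof of Proposition \ref{general lambda}\,(2).
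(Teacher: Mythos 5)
Your proposal is correct and takes essentially the same route the paper points to: cite \cite[Proposition 3.1]{NY} for the absolute version (the $\gamma$-analog of Proposition \ref{general lambda}), dualize Remark \ref{general remark lambda} using the standard fact $\gamma_{W_0}\gamma_W\cong\gamma_{W_0}\cong\gamma_W\gamma_{W_0}$ for $W_0\subseteq W$, and then perform the same set-theoretic reduction as in Corollary \ref{composition 2}. One small inaccuracy in your closing commentary (which does not affect the main argument, since you cite rather than re-prove the absolute version): the faithful dual of the proof of Proposition \ref{general lambda}\,(2) tests membership in $\cL_U$ by applying $\gamma_{U^c}$ rather than $\lambda^{U^c}$, using the identity $\cL_U=\cL_{U^c}^\perp$ from Lemma \ref{localizing and colocalizing} and the $\cL$-analog of Corollary \ref{perp cosupport ver}, whereas the adjoint pair $(\gamma_{U^c},\lambda^{U^c})$ is what gets used in dualizing case (1), not case (2).
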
\vspace{3mm}

As with Theorem \ref{main result 1}, it is possible to prove the following theorem, in which we implicitly use the fact that $\gamma_{W_0}\gamma_W\cong \gamma_{W_0}$ if $W_0\subseteq W$ (cf. \cite[Remark 3.7 (i)]{NY}).

\begin{theorem}\label{gamma pushout}
Let $W$, $W_0$ and $W_1$  be subsets of $\Spec R$ with $W=W_0\cup W_1$.
Suppose that one of the following conditions holds:
\begin{itemize}
\item[\rm (1)] $W_0$ is specialization-closed in $W$;
\item[\rm (2)] $W_1$ is generalization-closed in $W$.
\end{itemize}
Then, for any $X\in \cD$, there is a triangle of the following form;
$$\begin{CD}
\gamma_{W_1}\gamma_{W_0} X@>f>>\gamma_{W_1}X\oplus\gamma_{W_0} X @>g>>  \gamma_{W} X @>>> \gamma_{W_1}\gamma_{W_0} X[1],
\end{CD}$$
where $f$ and $g$ are morphisms represented by the following matrices;
$$
f=\left(\begin{array}{c}
\gamma_{W_1}\varepsilon_{W_0}X\\[7pt] (-1) \cdot \varepsilon_{W_1}\gamma_{W_0}X
\end{array}\right),\ \ 
g=\left(\begin{array}{cc}\varepsilon_{W_1}\gamma_{W}X\ \ \ 
\varepsilon_{W_0}\gamma_WX
\end{array}\right)
.$$
\end{theorem}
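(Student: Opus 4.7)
The plan is to run the argument of Theorem \ref{main result 1} in its dual form, via the following substitutions: $\lambda^W \leftrightarrow \gamma_W$, the unit $\eta^W:\id_{\cD}\to\lambda^W$ is replaced by the counit $\varepsilon_W:\gamma_W\to\id_{\cD}$, and $\cC^W$ is replaced by $\cL_W$. Proposition \ref{general gamma} plays the role that Corollary \ref{composition 2} did in the earlier proof.

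First, I would embed $f$ into a triangle
$$\gamma_{W_1}\gamma_{W_0}X\xrightarrow{f}\gamma_{W_1}X\oplus\gamma_{W_0}X\xrightarrow{a} D\longrightarrow\gamma_{W_1}\gamma_{W_0}X[1].$$
Since $\gamma_{W_i}X\in\cL_{W_i}\subseteq\cL_W$ and $\cL_W$ is triangulated, $D\in\cL_W$. The naturality of $\varepsilon_{W_1}$ applied to $\varepsilon_{W_0}X:\gamma_{W_0}X\to X$ yields $\varepsilon_{W_1}X\cdot\gamma_{W_1}\varepsilon_{W_0}X=\varepsilon_{W_0}X\cdot\varepsilon_{W_1}\gamma_{W_0}X$, and a short diagram-chase using the canonical identifications $\gamma_{W_i}X\cong\gamma_{W_i}\gamma_W X$ gives $g\cdot f=0$. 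Consequently there is a morphism $b:D\to\gamma_W X$ satisfying $b\cdot a=g$.

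Next, embed $b$ into a triangle $Z\to D\xrightarrow{b}\gamma_W X\to Z[1]$. Since both $D$ and $\gamma_W X$ lie in $\cL_W$, we have $Z\in\cL_W$, so proving $Z\in\cL_W^\perp$ will force $Z=0$. For this I would show that $\gamma_{W_1}b$ and $\gamma_{W_0}b$ are isomorphisms. Apply $\gamma_{W_1}$ to the sequence $f,g$: because $\gamma_{W_1}\gamma_{W_1}\cong\gamma_{W_1}$ and $\gamma_{W_1}\gamma_W\cong\gamma_{W_1}$ (as $W_1\subseteq W$), one of the components of $\gamma_{W_1}f$ becomes an isomorphism, so the $\gamma_{W_1}$-image of the $f$-triangle splits. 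Comparison with the $\gamma_{W_1}$-image of the $b$-triangle (these share the identity on the middle term) then forces $\gamma_{W_1}b$ to be an isomorphism. The same step with $\gamma_{W_0}$ in place of $\gamma_{W_1}$ uses Proposition \ref{general gamma}, which provides $\gamma_{W_0}\gamma_{W_1}\cong\gamma_{W_0\cap W_1}$, producing the analogous split triangle and giving that $\gamma_{W_0}b$ is an isomorphism.

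Finally, $\gamma_{W_0}Z=\gamma_{W_1}Z=0$, so by adjointness of $(i_{W_i},\gamma_{W_i})$ we get $Z\in\cL_{W_0}^\perp\cap\cL_{W_1}^\perp$. Since $\cL_W$ is the smallest localizing subcategory of $\cD$ generated by $\cL_{W_0}\cup\cL_{W_1}$ (by Neeman's description of $\cL_W$ via residue fields) and $\,{}^\perp\!Z$ is a localizing subcategory of $\cD$, we conclude $Z\in\cL_W^\perp$, hence $Z\in\cL_W\cap\cL_W^\perp=0$. Thus $b$ is an isomorphism, yielding the desired triangle. The main obstacle is the bookkeeping in the previous paragraph: one must match the $\gamma_{W_i}$-image of the $f$-triangle with the $\gamma_{W_i}$-image of the $b$-triangle so that $\gamma_{W_i}b$ is manifestly an isomorphism. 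This rests on the identity $\varepsilon_{W_i}\gamma_{W_i}=\gamma_{W_i}\varepsilon_{W_i}$ being invertible — the precise dual of the invertibility of $\lambda^{W_j}\eta^{W_j}$ exploited in the proof of Theorem \ref{main result 1}.
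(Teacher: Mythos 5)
Your proof is correct and is exactly the dualization of the paper's proof of Theorem \ref{main result 1}, which is precisely what the paper intends (it only remarks that the theorem is proved ``as with Theorem \ref{main result 1}'', implicitly using $\gamma_{W_0}\gamma_W\cong\gamma_{W_0}$ for $W_0\subseteq W$). Each step --- taking the cone $D$ of $f$, verifying $g\cdot f=0$ via the naturality square for $\varepsilon_{W_1}$ together with the $(i_W,\gamma_W)$-adjunction, producing split triangles after applying $\gamma_{W_1}$ and $\gamma_{W_0}$ (the latter via Proposition \ref{general gamma}), and concluding $Z\in\cL_W\cap\cL_W^\perp=0$ --- is the formal dual of the corresponding step in the $\lambda$ case.
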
\vspace{3mm}

\begin{remark}
As long as we work on the derived category $\cD$, 
Theorem \ref{main result 1} and Theorem \ref{gamma pushout} generalize Mayer-Vietoris triangles in the sense of Benson, Iyengar and Krause \cite[Theorem 7.5]{BIK}, in which $\gamma_V$ and $\lambda_V$ are written as $\varGamma_V$ and $L_V$ respectively for a specialization-closed subset $V$ of $\Spec R$.
\end{remark}


\section{Projective dimension of flat modules}

As an application of results in Section 3, we give a simpler proof of a classical theorem due to Gruson and Raynaud.

\begin{theorem}[{\cite[II; Corollary 3.2.7]{RG}}]\label{RG}
Let $F$ be a flat $R$-module. Then the projective dimension of F is at most $\dim R$.
\end{theorem}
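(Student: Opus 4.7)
The plan is to induct on $d = \dim R$. The base case $d = 0$ is immediate: a Noetherian ring of Krull dimension zero is Artinian, hence perfect, so every flat module is projective and has $\pd = 0$.

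For the inductive step, suppose $d \geq 1$ and assume the result for all commutative Noetherian rings of dimension strictly less than $d$. I set $W_0 = \mathrm{Max}(R)$ and $W_1 = \Spec R \setminus \mathrm{Max}(R)$. Because each maximal ideal $\fm$ satisfies $V(\fm) = \{\fm\}$, $W_0$ is specialization-closed in $\Spec R$, so Theorem \ref{main result 1} applies to $W = \Spec R = W_0 \cup W_1$ and yields a triangle
$$
F \longrightarrow \lambda^{W_1} F \oplus \lambda^{W_0} F \longrightarrow \lambda^{W_1}\lambda^{W_0} F \longrightarrow F[1]. \tag{$\ast$}
$$
The key arithmetic observation driving the induction is that for every $\fp \in W_1$ one has $\hight(\fp) \leq d-1$, so the localization $R_\fp$ is Noetherian of dimension at most $d-1$.

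Next I would identify the three non-trivial terms of $(\ast)$ explicitly. By Theorem \ref{lambda dim W=0} combined with Corollary \ref{lambda p} and Proposition \ref{not necessarily K-flat},
$$
\lambda^{W_0} F \;\cong\; \prod_{\fm \in \mathrm{Max}(R)} \widehat{F_\fm},
$$
and by Proposition \ref{flatness} together with Lemma \ref{characterization complete flat} each factor $\widehat{F_\fm}$ is of the form $\widehat{R_\fm^{(B_\fm)}}$, the $\fm$-adic completion of a free $R_\fm$-module. Since $W_1$ is generalization-closed in $\Spec R$, Proposition \ref{general lambda} gives $\lambda^{W_1}\lambda^{W_0} F \cong \lambda^{W_0 \cap W_1}\cdots$ — more usefully, $\lambda^{W_1}$ applied to the product can be analyzed factor-by-factor because $\dim W_1 \cap V(\fm) \leq d-1$ for each $\fm$, and so the inductive hypothesis applies to each localization $R_\fp$ with $\fp \in W_1$. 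The goal is to show that $\lambda^{W_1} F$ and $\lambda^{W_1}\lambda^{W_0} F$ both admit $K$-projective resolutions of length $\leq d-1$, while $\lambda^{W_0} F$ is represented (up to quasi-isomorphism) by a complex of projectives in degrees $[-d,0]$ via an Auslander–Buchsbaum style regular-sequence argument on each complete local factor.

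The main obstacle is quantitative: the naive triangle inequality applied to $(\ast)$ only yields $\pd F \leq d+1$. To save the extra step one must exploit the diagonal shape of the morphism $f$ in Theorem \ref{main result 1}: the total complex assembled from the two-term complex $\bigl[\lambda^{W_1} F \oplus \lambda^{W_0} F \longrightarrow \lambda^{W_1}\lambda^{W_0} F\bigr]$ is already quasi-isomorphic to $F$, so it suffices to bound each summand individually by $d-1$ and then add the one degree of the differential. The delicate point is therefore establishing the sharp bound $\pd \lambda^{W_0} F \leq d-1$ for the completed factor, which I expect requires careful bookkeeping with the inductive hypothesis applied to the regular local ring governing $\widehat{R_\fm}$ together with the interaction $\lambda^{W_1}\lambda^{W_0} \cong \lambda^{W_0 \cap W_1}$ from Corollary \ref{composition 2}.
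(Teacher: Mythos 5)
Your inductive framework starts from the same decomposition as the paper (peeling off $W_0=\max(\Spec R)$ and using Theorem \ref{main result 1}), but the argument you propose has a genuine gap that the paper's proof carefully avoids. The step you flag as ``the delicate point''---establishing $\pd_R \lambda^{W_0}F \leq d-1$---is in fact unattainable by the inductive hypothesis you set up: for a maximal ideal $\fm$ of height $d$, the ring $R_\fm$ (or $\widehat{R_\fm}$) has dimension $d$, not $< d$, so applying the result to ``the regular local ring governing $\widehat{R_\fm}$'' reduces nothing. Worse, the bound is simply false in general: for a non-complete regular local ring $R$ of dimension $d$ one typically has $\pd_R\widehat{R}=d$, not $d-1$. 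Since the rest of your arithmetic needs exactly this sharp $d-1$ bound on both summands to avoid the $d+1$ overshoot you yourself identify, the argument does not close.

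The paper resolves precisely this difficulty with Lemma \ref{flat RHom lemma}, which you do not use and which is the decisive trick. It shows that $\RHom_R(F,X)\cong\prod_B X$ whenever $X\in\cC^{\{\fp\}}$, because $\RHom_R(F,X)\cong\RHom_R(\lambda^{\{\fp\}}F,X)$ and $\lambda^{\{\fp\}}F$ is a product-of-copies of $\lambda^{\{\fp\}}R$. Consequently $\Ext_R^{>0}(F,\lambda^{W_0}X)=0$ holds automatically for any $X\in\cD^{[-\infty,0]}$, with no projective-dimension bound on the completion ever needed. Proposition \ref{flat key corollary} then proves $\Ext_R^i(F,\lambda^W X)=0$ for $i>\dim W$ by induction on $\dim W$ (not on $\dim R$), and the theorem follows by taking $W=\Spec R$ and testing against an arbitrary module $M\cong\lambda^{\Spec R}M$. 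To repair your write-up, replace the plan to bound $\pd$ of the pieces by this dual ``test against all modules'' formulation, with Lemma \ref{flat RHom lemma} supplying the base case.
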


We start by showing the following lemma.

\begin{lemma}\label{flat RHom lemma}
Let $F$ be a flat $R$-module and $\fp$ be a prime ideal of $R$.
Suppose that $X\in \cC^{\{\fp\}}$. 
Then there is an isomorphism
$$\RHom_R(F,X)\cong \prod_{B}X,$$
where $B=\dim_{\kappa(\fp)}F\otimes_R\kappa(\fp)$.
\end{lemma}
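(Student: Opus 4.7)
My plan is to reduce the computation to the case where $F$ is replaced by a free $R$-module of rank $B$ by passing through the Bousfield localization $\lambda^{\{\fp\}}$, and then to apply a standard adjunction.

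The first step is to establish the following reflection property: for any $Y\in \cD$ and any $X\in \cC^{\{\fp\}}$, the unit $\eta^{\{\fp\}}Y$ induces an isomorphism
$$\RHom_R(\lambda^{\{\fp\}}Y, X) \xrightarrow{\ \sim\ } \RHom_R(Y, X).$$
This is immediate from Lemma \ref{basic lemma}: the triangle
$$\gamma_{\{\fp\}^c}Y \to Y \to \lambda^{\{\fp\}}Y \to \gamma_{\{\fp\}^c}Y[1]$$
has fiber in $\cL_{\{\fp\}^c}={}^\perp\cC^{\{\fp\}}$, and because $\cC^{\{\fp\}}$ is closed under shifts the vanishing of $\Hom_\cD(\gamma_{\{\fp\}^c}Y[n],X)$ for every $n$ upgrades to $\RHom_R(\gamma_{\{\fp\}^c}Y,X)=0$.

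The second step is to identify $\lambda^{\{\fp\}}F$ with $\lambda^{\{\fp\}}(\bigoplus_B R)$ as objects of $\cD$. By Lemma \ref{lambda p 2} applied to the flat module $F$ one has $\lambda^{\{\fp\}}F \cong (\bigoplus_B R_\fp)^\wedge_\fp$; the same lemma applied to the flat module $\bigoplus_B R$ (for which $\dim_{\kappa(\fp)}(\bigoplus_B R)\otimes_R\kappa(\fp)=B$ trivially) yields $\lambda^{\{\fp\}}(\bigoplus_B R) \cong (\bigoplus_B R_\fp)^\wedge_\fp$ as well. Combining these with the reflection isomorphism of the first step gives
$$\RHom_R(F,X) \cong \RHom_R(\lambda^{\{\fp\}}F, X) \cong \RHom_R\!\left(\lambda^{\{\fp\}}\Bigl(\bigoplus_B R\Bigr),\, X\right) \cong \RHom_R\!\left(\bigoplus_B R,\, X\right) \cong \prod_B X,$$
where the final isomorphism uses that a free $R$-module is $K$-projective.

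I do not expect any real obstacle. The only point that deserves a second glance is that in the chain above the identification $\lambda^{\{\fp\}}F\cong\lambda^{\{\fp\}}(\bigoplus_B R)$ is needed only as an isomorphism in $\cD$, not naturally in $F$, so two independent invocations of Lemma \ref{lambda p 2} suffice; everything else is formal adjunction.
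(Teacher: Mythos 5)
Your proposal is correct and follows essentially the same route as the paper: use the adjunction (via the triangle of Lemma~\ref{basic lemma}) to replace $F$ by $\lambda^{\{\fp\}}F$, invoke Lemma~\ref{lambda p 2} twice to identify $\lambda^{\{\fp\}}F$ with $\lambda^{\{\fp\}}(\bigoplus_B R)$, and conclude by the free-module case. You simply spell out the shift-closure argument behind the $\RHom$-level adjunction, which the paper leaves implicit.
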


\begin{proof}
Since $\lambda^{\{\fp\}}:\cD\to \cC^{\{\fp\}}$ is a left adjoint to the inclusion functor $ \cC^{\{\fp\}}\hookrightarrow\cD$, we have $\RHom_R(F,X)\cong\RHom_R(\lambda^{\{\fp\}}F,X)$.
Moreover it follows from Lemma \ref{lambda p 2} that 
$\lambda^{\{\fp\}}F\cong (\bigoplus_{B}R_\fp)^\wedge_\fp\cong \lambda^{\{\fp\}}(\bigoplus_{B}R)$, where $B=\dim_{\kappa(\fp)}F\otimes_R\kappa(\fp)$.
Therefore
we obtain isomorphisms 
$\RHom_R(F,X)\cong \RHom_R(\lambda^{\{\fp\}}(\bigoplus_{B}R), X)\cong \RHom_R(\bigoplus_{B}R, X)\cong \prod_{B}X.$
\end{proof}\vspace{2mm}
Let $a, b\in \Z\cup \{\pm\infty\}$ with $a \leq b$.
We write $\cD^{{[a,b]}}$ for the full subcategory of $\cD$ consisting of all complexes $X$ of $R$-modules such that $H^i(X)=0$ for $i\notin[a, b]$ (cf. \cite[Notation 13.1.11]{KS}).
For a subset $W$ of $\Spec R$, $\max W$ denotes the set of prime ideals $\fp\in W$ which are maximal with respect to inclusion in $W$.

\begin{proposition}\label{flat key corollary}
Let $F$ be a flat $R$-module and  $X\in \cD^{[-\infty ,0]}$.
Suppose that  $W$ is a subset of $\Spec R$ such that  $n=\dim W$ is finite.
Then we have $\Ext_R^i(F, \lambda^{W}X)=0$ for $i>n$.
\end{proposition}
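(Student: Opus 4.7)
The plan is to proceed by induction on $n = \dim W$.

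For the base case $n = 0$, Theorem~\ref{lambda dim W=0} gives $\lambda^W X \cong \prod_{\fp \in W} \lambda^{\{\fp\}} X$. First I would note that each $\lambda^{\{\fp\}} X$ lies in $\cD^{[-\infty, 0]}$: by Corollary~\ref{lambda p}, $\lambda^{\{\fp\}} X \cong \LLambda^{V(\fp)}(X \otimes_R R_\fp)$, computable via a $K$-projective resolution of $X \otimes_R R_\fp$ concentrated in non-positive degrees. Since $\RHom_R(F,-)$ commutes with direct products, applying Lemma~\ref{flat RHom lemma} factor by factor yields
$$\RHom_R(F, \lambda^W X) \cong \prod_{\fp \in W}\, \prod_{B_\fp} \lambda^{\{\fp\}} X,$$
which still lies in $\cD^{[-\infty, 0]}$ because direct products of $R$-modules are exact. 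This gives the required vanishing for $i > 0$.

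For the inductive step with $n \geq 1$, I would take $W_0 = \max W$ and $W_1 = W \setminus W_0$. Then $W_0$ is specialization-closed in $W$ (for $\fp \in W_0$ and $\fq \in V(\fp) \cap W$, maximality of $\fp$ forces $\fq = \fp$), $\dim W_0 = 0$, and $\dim W_1 \leq n - 1$ because any chain in $W_1$ has its top element non-maximal in $W$ and so extends in $W$ by a strictly larger prime. Theorem~\ref{main result 1} then produces a triangle
$$\lambda^W X \to \lambda^{W_1} X \oplus \lambda^{W_0} X \to \lambda^{W_1} \lambda^{W_0} X \to \lambda^W X[1],$$
to which I would apply $\RHom_R(F,-)$ and read off the associated long exact sequence.

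By the inductive hypothesis, $\Ext_R^i(F, \lambda^{W_0} X) = 0$ for $i > 0$ and $\Ext_R^i(F, \lambda^{W_1} X) = 0$ for $i > n - 1$. For the third term $\lambda^{W_1}(\lambda^{W_0} X)$, I would first observe that $\lambda^{W_0} X$ is again in $\cD^{[-\infty, 0]}$ (as a product of objects of $\cD^{[-\infty, 0]}$ via Theorem~\ref{lambda dim W=0} and the base case analysis), so the hypothesis applies and gives $\Ext_R^i(F, \lambda^{W_1}\lambda^{W_0} X) = 0$ for $i > n - 1$. Feeding these into the long exact sequence, for any $i > n$ all neighboring terms vanish and we conclude $\Ext_R^i(F, \lambda^W X) = 0$. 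The only technical point requiring care is verifying that the bounded-above condition on $X$ propagates through successive applications of the functors $\lambda^{W_j}$ so that the inductive hypothesis remains applicable; beyond this bookkeeping, the Mayer--Vietoris-type triangle furnished by Theorem~\ref{main result 1} does the essential work.
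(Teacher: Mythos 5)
Your proposal is correct and follows essentially the same approach as the paper: induction on $n=\dim W$, with the base case handled via Theorem \ref{lambda dim W=0} and Lemma \ref{flat RHom lemma}, and the inductive step via the decomposition $W_0=\max W$, $W_1=W\setminus W_0$ and the Mayer--Vietoris-type triangle of Theorem \ref{main result 1}. The "technical point" you flag at the end---that $\lambda^{W_0}X$ remains in $\cD^{[-\infty,0]}$ so the inductive hypothesis applies to $\lambda^{W_1}\lambda^{W_0}X$---is indeed the one detail the paper's proof also records explicitly, and your justification of it is correct.
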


\begin{proof}
We use induction on $n$.
First, we suppose that $n=0$. 
It then holds that $\lambda^{W}X\cong \prod_{\fp\in W}\lambda^{\{\fp\}}X\cong \prod_{\fp\in W}\LLambda^{V(\fp)}X_\fp\in \cD^{[-\infty ,0]}$, by Theorem \ref{lambda dim W=0}. 
Hence, noting that $\RHom_R(F, \lambda^WX)\cong \prod_{\fp\in W}\RHom_R(F,  \lambda^{\{\fp\}}X)$, we have $\Ext^i_R(F,\lambda^{W}X)=0$ for $i>0$, by Lemma \ref{flat RHom lemma}.

Next, we suppose $n>0$.
Set $W_0=\max W$ and $W_1=W\backslash W_0$. 
By Theorem \ref{main result 1}, there is a triangle
$$\begin{CD}
\lambda^{W} X@>>>\lambda^{W_1}X\oplus\lambda^{W_0} X @>>>  \lambda^{W_1}\lambda^{W_0} X @>>> \lambda^{W} X[1].
\end{CD}$$
Note that $\dim W_0=0$ and $\dim W_1=n-1$.
By the argument above, it holds that    
$\Ext^i_R(F,\lambda^{W_0}X)=0$
 for $i>0$. 
Furthermore, since $X,\lambda^{W_0}X\in \cD^{[-\infty ,0]}$, we have  
$\Ext_R^i(F,\lambda^{W_1}X)=\Ext^i_R(F,\lambda^{W_1}\lambda^{W_0}X)=0
$ for $i>n-1$, by the inductive hypothesis.
Hence it is seen from the triangle that $\Ext^i_R(F,\lambda^{W}X)=0$ for $i>n$.
\end{proof}

\noindent {\it Proof of Theorem \ref{RG}.}
We may assume that $d=\dim R$ is finite.
Let $M$ be any $R$-module.
We only have to show that $\Ext_R^i(F,M)=0$ for $i>d$.
Setting $W=\Spec R$, we have $\dim W=d$ and $M\cong \lambda^{W}M$.
It then follows from Proposition \ref{flat key corollary} that 
$\Ext^i_R(F, M)\cong \Ext^i_R(F,  \lambda^WM)=0$ for $i> d$.
\qed




\section{Cotorsion flat modules and cosupport}

In this section, we summarize some basic facts about cotorsion flat $R$-modules.\medskip

Recall that an $R$-module $M$ is called cotorsion if $\Ext^1_R(F,M)=0$ for any flat $R$-module $F$.
This is equivalent to saying that $\Ext^i_R(F,M)=0$ for any flat $R$-module $F$ and any $i>0$.
Clearly, all injective $R$-modules are cotorsion.

A cotorsion flat $R$-module means an $R$-module which is cotorsion and flat. 
If $F$ is a flat $R$-module and $\fp\in \Spec R$, then Corollary \ref{lambda p} implies that $\lambda^{\{\fp\}}F$ is isomorphic to $\widehat{F_\fp}$, which is a  cotorsion flat $R$-module by Lemma \ref{flat RHom lemma} and Proposition \ref{flatness}.
Moreover, recall that $\widehat{F_\fp}$ is isomorphic to the $\fp$-adic completion of a free $R_\fp$-module by Lemma \ref{lambda p 2}. 

We remark that arbitrary direct products of flat $R$-modules are flat, since $R$ is Noetherian.
Hence, if $T_\fp$ is the $\fp$-adic completion of a free $R_\fp$ module for each $\fp\in \Spec R$, 
then $\prod_{\fp\in \Spec R} T_\fp$ is a cotorsion flat $R$-module.
Conversely, the following fact holds.

\begin{proposition}[{Enochs \cite{E1}}]\label{Enochs theorem}
Let $F$ be a cotorsion flat $R$-module. 
Then there is an isomorphism 
$$F\cong \prod_{\fp\in \Spec R} T_\fp,$$
where $T_\fp$ is the $\fp$-adic completion of a free $R_\fp$ module.
\end{proposition}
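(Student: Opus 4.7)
The natural candidate for the factor at $\fp$ is $T_\fp := \lambda^{\{\fp\}} F$. By Lemma \ref{lambda p 2}, $T_\fp \cong (\bigoplus_{B_\fp} R_\fp)^\wedge_\fp$ with $B_\fp := \dim_{\kappa(\fp)} F \otimes_R \kappa(\fp)$, which is precisely the $\fp$-adic completion of a free $R_\fp$-module; by Proposition \ref{flatness} and Lemma \ref{flat RHom lemma}, each $T_\fp$ is cotorsion flat, and hence so is $\prod_\fp T_\fp$ (since $R$ is Noetherian, arbitrary products of flat modules are flat, and products of cotorsion modules are cotorsion). The universal morphisms $\eta^{\{\fp\}}F\colon F\to T_\fp$ assemble into a canonical morphism
$$
\phi \colon F \longrightarrow \prod_{\fp \in \Spec R} T_\fp
$$
in $\cD$, and the proposition reduces to showing that $\phi$ is an isomorphism.

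I would attempt this by induction on $d = \dim R$, assuming $d$ finite (the general Noetherian case needs additional care, typically a cofinality argument over finite-dimensional subsets of $\Spec R$). The base case $d = 0$ is immediate from Theorem \ref{lambda dim W=0}: since $\cC^{\Spec R} = \cD$, one has $F \cong \lambda^{\Spec R} F \cong \prod_\fp \lambda^{\{\fp\}}F = \prod_\fp T_\fp$. For the inductive step, set $W_0 := \{\fp : \dim R/\fp = 0\}$ (which is specialization-closed in $\Spec R$) and $W_1 := \Spec R \setminus W_0$ of dimension at most $d-1$, and apply Theorem \ref{main result 1} to obtain the Mayer--Vietoris triangle
$$
F \longrightarrow \lambda^{W_1} F \oplus \lambda^{W_0} F \longrightarrow \lambda^{W_1} \lambda^{W_0} F \longrightarrow F[1].
$$
Theorem \ref{lambda dim W=0} identifies $\lambda^{W_0} F \cong \prod_{\fp \in W_0} T_\fp$, and the goal is to identify $\lambda^{W_1}F$ via the inductive hypothesis applied to $F$ and the subset $W_1$, and to understand $\lambda^{W_1} \lambda^{W_0} F$ well enough that the long exact cohomology sequence of the triangle collapses into the desired product.

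The principal obstacle is to establish two things simultaneously: (a) the iterated localization $\lambda^{W_1}F$ is concentrated in cohomological degree zero and is itself cotorsion flat, so that the inductive hypothesis applies; and (b) the behaviour of $\lambda^{\{\fq\}}$ on infinite products of $T_\fp$'s is sufficiently controlled for $\fq \in W_1$ (a left adjoint need not preserve products, as is emphasised in Remark \ref{not commutative}). The cotorsion-flat hypothesis on $F$ is essential here: the bound $\Ext^i(G,\lambda^W X)=0$ for $i>\dim W$ and flat $G$ from Proposition \ref{flat key corollary}, combined with the cotorsion condition $\Ext^{>0}_R(G,F)=0$ for every flat $G$, forces the higher cohomology in each step of the iterated Mayer--Vietoris to vanish, and together with the explicit structure $T_\fp \cong (\bigoplus_{B_\fp}R_\fp)^\wedge_\fp$ of the flat atoms permits control of the mixed iterates. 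An alternative, arguably cleaner but logically anachronistic, route is to apply the \v{C}ech-complex description of $\id \cong \lambda^{\Spec R}$ on complexes of flat modules from Section 7 directly to $F$: for cotorsion flat $F$, the higher columns of that finite double complex are annihilated by the same $\Ext$-vanishing argument, leaving only $\prod_\fp T_\fp$ in degree zero.
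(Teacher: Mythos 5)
The paper gives no proof of this proposition: it is quoted directly from Enochs's work, with the references \cite[Theorem]{E1} and \cite[Theorem 5.3.28]{EJ}, and it serves as an input to the machinery developed afterwards rather than a consequence of it. In particular Corollary \ref{Enochs cor} refines the present statement, Notation \ref{Enochs rep} and Proposition \ref{cosupport key prop} depend on Corollary \ref{Enochs cor}, and Corollary \ref{cor main result 2} depends on Proposition \ref{cosupport key prop}; so your closing suggestion to apply the \v{C}ech-complex description of $\lambda^{\Spec R}$ from Section 7 directly to $F$ would be circular.

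Independently of the circularity, the very first step of your proposal is wrong and cannot be repaired. The factor at $\fp$ in Enochs's decomposition is \emph{not} $\lambda^{\{\fp\}}F$. Take $F=\widehat{R_\fq}$ for a prime $\fq$ of positive height: this is cotorsion flat with $\cosupp F = \{\fq\}$, so in Enochs's decomposition $T_\fq = \widehat{R_\fq}$ while $T_\fp = 0$ for $\fp \neq \fq$. However, Remark \ref{not commutative} shows that $\lambda^{\{\fp\}}\widehat{R_\fq} \neq 0$ whenever $\fp \subsetneq \fq$, so $\prod_\fp \lambda^{\{\fp\}}\widehat{R_\fq}$ has nonzero factors at every $\fp\subseteq\fq$ and therefore has cosupport $U(\fq)\supsetneq\{\fq\}$; the canonical morphism $\phi$ you define is never an isomorphism for such $F$, and your induction is pursuing a false identity. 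The correct factor is governed by $\Hom_R(R_\fp,F)$ rather than by $F\otimes_R R_\fp$: compare the rank formula $B_\fp=\dim_{\kappa(\fp)}\Hom_R(R_\fp,F)\otimes_R\kappa(\fp)$ in Corollary \ref{Enochs cor} with Lemma \ref{lambda p 2}, and note that the discussion following Remark \ref{not commutative} explicitly distinguishes $\varLambda^\fp = \LLambda^{V(\fp)}\RHom_R(R_\fp,-)$ from $\lambda^{\{\fp\}}$ precisely because of this asymmetry. Your closing claim that for cotorsion flat $F$ the higher terms of $L^{\mathbb{W}}F$ vanish is also false; Remark \ref{minimal compare} and the example closing Section 9 show that $L^{\mathbb{W}}N$ is a non-minimal pure-injective resolution with genuinely nonzero higher-degree terms even for $N=R$.
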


\begin{proof}
See \cite[Theorem]{E1} or \cite[Theorem 5.3.28]{EJ}.
\end{proof}\vspace{1mm}

Let $S$ be a multiplicatively closed subset of $R$ and $\fa$ be an ideal of $R$.
For a cotorsion flat $R$-module $F$, we have $\RHom_R(S^{-1}R, F)\cong \Hom_R(S^{-1}R, F)$ and 
$\LLambda^{V(\fa)}F\cong \Lambda^{V(\fa)}F$. 
Moreover, by Proposition \ref{Enochs theorem}, we may regard $F$ as an $R$-module of the form $\prod_{\fp\in \Spec R} T_\fp$.
Then it holds that
\begin{align}
\label{colocalization of cotorsion flat}
\RHom_R(S^{-1}R, \prod_{\fp\in \Spec R} T_\fp)\cong \Hom_R(S^{-1}R, \prod_{\fp\in \Spec R} T_\fp)\cong\prod_{\fp\in U_S} T_\fp.
\end{align}
This fact appears implicitly in \cite[\S 5.2]{X}.
Furthermore we have 
\begin{align}
\label{completion of cotorsion flat}
\LLambda^{V(\fa)}\prod_{\fp\in \Spec R} T_\fp\cong \Lambda^{V(\fa)}\prod_{\fp\in \Spec R} T_\fp\cong  \prod_{\fp\in V(\fa)} T_\fp.
\end{align}
One can show (\ref{colocalization of cotorsion flat}) and (\ref{completion of cotorsion flat}) by Lemma \ref{localizing and colocalizing} and (\ref{derived local homology}).
See also the recent paper \cite[Lemma 2.2]{PT} of Thompson.\medskip

Let $F$ be a cotorsion flat $R$-module with $\cosupp F\subseteq W$ for a subset $W$ of $\Spec R$.
Then it follows from Proposition \ref{Enochs theorem} that $F$ is isomorphic to an $R$-module of the form $\prod_{\fp\in W} T_\fp$.
More precisely, using Lemma \ref{characterization complete flat}, (\ref{colocalization of cotorsion flat}) and $(\ref{completion of cotorsion flat})$, one can show the following corollary, which is essentially proved in \cite[Lemma 8.5.25]{EJ}.

\begin{corollary}\label{Enochs cor}
Let $F$ be a cotorsion flat $R$-module, and set $W=\cosupp F$. Then we have an isomorphism
$$F\cong \prod_{\fp \in W} T_\fp,$$
where $T_\fp$ is of the form $(\bigoplus_{B_\fp} R_\fp)^\wedge_\fp$ with $B_\fp=\dim_{\kappa(\fp)}\Hom_R(R_\fp, F)\otimes_R\kappa(\fp)$. 
\end{corollary}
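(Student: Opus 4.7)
My plan is to start from Proposition \ref{Enochs theorem}, which gives an isomorphism $F \cong \prod_{\fp \in \Spec R} T_\fp$ with $T_\fp = (\bigoplus_{A_\fp} R_\fp)^{\wedge}_{\fp}$ for some cardinalities $A_\fp$, and then accomplish two things: (i) show that $T_\fp = 0$ whenever $\fp \notin W$, so that the product may be restricted to $\fp \in W$, and (ii) identify $A_\fp$ with $B_\fp$.

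For step (i), I would give an intrinsic description of $T_\fp$. Applying $\Hom_R(R_\fp,-)$ to the decomposition, (\ref{colocalization of cotorsion flat}) (with $S = R \setminus \fp$, so that $U_S = U(\fp)$) yields $\Hom_R(R_\fp, F) \cong \prod_{\fq \subseteq \fp} T_\fq$. This is again a cotorsion flat module of the form considered in (\ref{completion of cotorsion flat}), so applying $\Lambda^{V(\fp)}$ produces $\Lambda^{V(\fp)}\Hom_R(R_\fp, F) \cong T_\fp$, since $V(\fp) \cap U(\fp) = \{\fp\}$. Since $R_\fp$ is flat and $F$ is cotorsion one has $\RHom_R(R_\fp, F) = \Hom_R(R_\fp, F)$, and $\LLambda^{V(\fp)}$ coincides with $\Lambda^{V(\fp)}$ on the flat module $\Hom_R(R_\fp, F)$ by Proposition \ref{not necessarily K-flat}, so in fact $T_\fp \cong \LLambda^{V(\fp)}\RHom_R(R_\fp, F) = \varLambda^{\fp} F$. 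The characterization $\fp \in \cosupp F \iff \varLambda^{\fp} F \neq 0$ recalled just before Theorem \ref{lambda dim W=0} then forces $T_\fp = 0$ for $\fp \notin W$, yielding the reindexed product over $W$.

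For step (ii), Lemma \ref{characterization complete flat} identifies $A_\fp$ with $\dim_{\kappa(\fp)} T_\fp \otimes_R \kappa(\fp)$. I would apply Lemma \ref{tensor representation} to the flat module $\Hom_R(R_\fp, F)$ and the finitely generated module $R/\fp$, together with the identification $T_\fp \cong \Lambda^{V(\fp)}\Hom_R(R_\fp, F)$ from step (i), to obtain
$$T_\fp \otimes_R R/\fp \;\cong\; \bigl(\Hom_R(R_\fp, F) \otimes_R R/\fp\bigr)^{\wedge}_{\fp} \;=\; \Hom_R(R_\fp, F) \otimes_R R/\fp,$$
where the last equality holds because the right-hand side is annihilated by $\fp$ and hence already $\fp$-adically complete. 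Tensoring further over $R$ with $R_\fp$ gives $T_\fp \otimes_R \kappa(\fp) \cong \Hom_R(R_\fp, F) \otimes_R \kappa(\fp)$, so $A_\fp = B_\fp$. The main obstacle should be step (i): one must combine (\ref{colocalization of cotorsion flat}) and (\ref{completion of cotorsion flat}) with exactly the right choice of subset to carve out the $\fp$-component, and verify carefully that $\Hom_R(R_\fp, F)$ is itself cotorsion flat, which is needed both to apply (\ref{completion of cotorsion flat}) and to replace $\LLambda^{V(\fp)}\RHom$ by $\Lambda^{V(\fp)}\Hom$.
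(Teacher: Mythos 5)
Your proof is correct and uses exactly the ingredients the paper points to (Proposition \ref{Enochs theorem}, Lemma \ref{characterization complete flat}, (\ref{colocalization of cotorsion flat}), (\ref{completion of cotorsion flat}), Lemma \ref{tensor representation}); in particular, the identification $T_\fp \cong \Lambda^{V(\fp)}\Hom_R(R_\fp,F) \cong \varLambda^\fp F$ cleanly explains both why the product indexes over $W=\cosupp F$ and why the cardinal is $\dim_{\kappa(\fp)}\Hom_R(R_\fp,F)\otimes_R\kappa(\fp)$. This is essentially the same argument the paper sketches.
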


\section{Complexes of cotorsion flat modules and cosupport}

In this section, we study the cosupport of a complex $X$ consisting of cotorsion flat $R$-modules.
As a consequence, we obtain an explicit way to calculate $\gamma_{V^c}X$ and $\lambda^VX$ for a specialization-closed subset $V$ of $\Spec R$.

\begin{notation}\label{Enochs rep}
Let $W$ be a subset of $\Spec R$.
Let $X$ be a complex of cotorsion flat $R$-modules such that $\cosupp X^i\subseteq W$ for all $i\in \mathbb{Z}$.
Under Corollary \ref{Enochs cor}, we use a presentation of the following form;
$$X=(\cdots\to  \prod_{\fp\in W}T^i_\fp\to \prod_{\fp\in W}T^{i+1}_\fp\to \cdots ),$$ 
where $X^i= \prod_{\fp\in \Spec R}T^i_\fp$ and $T^i_\fp$ is the $\fp$-adic completion of a free $R_\fp$-module.
\end{notation}

\begin{remark}\label{remark for subcomplex}
Let $X=(\cdots\to  \prod_{\fp\in \Spec R}T^i_\fp\to \prod_{\fp\in \Spec R}T^{i+1}_\fp\to \cdots )$ be a complex of cotorsion flat $R$-modules. 
Let $V$ be a specialization-closed subset of $\Spec R$.
By Lemma \ref{localizing and colocalizing}, we have $\Hom_R(\prod_{\fp\in V^c}T^i_\fp,\prod_{\fp\in V}T^{i+1}_\fp)=0$ for all $i\in \mathbb{Z}$. 
Therefore $Y=(\cdots\to  \prod_{\fp\in V^c}T^i_\fp\to \prod_{\fp\in V^c}T^{i+1}_\fp\to \cdots )$
 is a subcomplex of $X$, where the differentials in $Y$ are the restrictions of ones in $X$.
\end{remark}

We say that a complex $X$ of $R$-modules is left (resp. right) bounded if $X^i=0$ for $i\ll 0$ (resp. $i\gg 0$). When $X$ is left and right bounded, $X$ is called bounded.

\begin{proposition}\label{cosupport key prop}
Let $W$ be a subset of $\Spec R$ and $X$ be a complex of cotorsion flat $R$-modules such that $\cosupp X^i\subseteq W$ for all $i\in \mathbb{Z}$.
Suppose that one of the following conditions holds;
\vspace{1mm}
\begin{enumerate}[label={\rm(\arabic*)}]
\item $X$ is left bounded;
\item $W$ is equal to $V(\fa)$ for an ideal $\fa$ of $R$; 
\item $W$ is generalization-closed;
\item $\dim W$ is finite.
\end{enumerate}
\vspace{1mm}
Then it holds that $\cosupp X\subseteq W$, {\it i.e.}, $X\in \cC^W$.
\end{proposition}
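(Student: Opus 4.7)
The plan is to dispatch the four cases separately, ordering them so that the earlier ones feed into the later ones. Case (2) is immediate: the isomorphism $\lambda^{V(\fa)}\cong\LLambda^{V(\fa)}$ of (\ref{derived local homology}) together with Proposition \ref{not necessarily K-flat} shows that the hypothesis $X^i\in\cC^{V(\fa)}$, applied to a flat $X^i$, forces each $X^i$ to be $\fa$-adically complete. Then $\Lambda^{V(\fa)}X=X$ term by term, so $\lambda^{V(\fa)}X\cong\LLambda^{V(\fa)}X\cong\Lambda^{V(\fa)}X=X$, giving $X\in\cC^{V(\fa)}$.

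For case (1), I would take a Cartan--Eilenberg injective resolution $I$ of $X$. Since $X$ is bounded below and each $I^{p,\bullet}$ is a nonnegative injective resolution of $X^p$, the bicomplex $I$ is first-quadrant up to shift, so $\tot I$ is a bounded below complex of injectives and hence a $K$-injective resolution of $X$. For $\fp\notin W$ the bicomplex $J^{p,q}=\Hom_R(\kappa(\fp),I^{p,q})$ computes $\RHom_R(\kappa(\fp),X)$ via totalization, and each column $J^{p,\bullet}$ represents $\RHom_R(\kappa(\fp),X^p)=0$. The first-quadrant spectral sequence then has $E_1=0$, forcing $\tot J=0$ and $\fp\notin\cosupp X$. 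Case (3) reduces to case (1): Lemma \ref{localizing and colocalizing} gives $\cC^W=\cL_W$, a localizing subcategory, hence closed under homotopy colimits of direct systems; writing $X=\hocolim_n\sigma^{\geq -n}X$ as the filtered union of its left-bounded stupid truncations, each of which lies in $\cC^W$ by case (1), we obtain $X\in\cC^W=\cL_W$.

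For case (4) I proceed by induction on $d=\dim W$. The structural input is the following observation, obtained from Corollary \ref{Enochs cor} and (\ref{colocalization of cotorsion flat}): for $\fp,\fq\in W$, the component $d^i_{\fp,\fq}\colon T^i_\fp\to T^{i+1}_\fq$ of the differential of $X$ vanishes unless $\fq\subseteq\fp$, because $T^i_\fp$ is an $R_\fp$-module and $\Hom_R(R_\fp,T^{i+1}_\fq)=0$ whenever $\fq\not\subseteq\fp$. When $d=0$, $W$ is an antichain, the differential is diagonal in the $\fp$-index, and $X\cong\prod_{\fp\in W}T^\bullet_\fp$ as complexes. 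Each factor $T^\bullet_\fp$ is a complex of cotorsion flats with terms in $\cC^{\{\fp\}}=\cC^{V(\fp)}\cap\cC^{U(\fp)}$, and applying case (2) with $\fa=\fp$ together with case (3) for the generalization-closed set $U(\fp)$ places $T^\bullet_\fp$ in $\cC^{\{\fp\}}$; closure under products then yields $X\in\cC^W$. For $d>0$, I set $W_0=\max W$ (specialization-closed in $W$, of dimension $0$) and $W_1=W\setminus W_0$ (of dimension $d-1$). Maximality of $W_0$ combined with the key observation rules out components $d^i_{\fp,\fq}$ with $\fp\in W_1$ and $\fq\in W_0$, so the differential is block upper triangular and gives a short exact sequence $0\to\prod_{\fp\in W_1}T^\bullet_\fp\to X\to\prod_{\fp\in W_0}T^\bullet_\fp\to 0$ of complexes. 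Applying the base case to the quotient and the inductive hypothesis to the subcomplex places both in $\cC^W$, so the resulting triangle in $\cD$ places $X$ in $\cC^W$.

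The main obstacle I anticipate is the differential analysis underlying case (4): carefully deducing the vanishing of $d^i_{\fp,\fq}$ for $\fq\not\subseteq\fp$ from the $R_\fp$-module structure on $T^i_\fp$ and the colocalization formula (\ref{colocalization of cotorsion flat}), and then checking that the resulting block upper triangular form genuinely yields an honest short exact sequence of complexes rather than merely a filtration requiring further spectral sequence analysis. Everything else reduces to standard derived-category tools (first-quadrant convergence, closure properties of (co)localizing subcategories, and induction on $\dim W$); the real content is making the product decomposition from Corollary \ref{Enochs cor} interact correctly with the differential of $X$.
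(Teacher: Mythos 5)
Your overall strategy — dispatch (1), (2), (3) directly and reduce case (4) to them by induction on $\dim W$ using the slicing $W_0=\max W$, $W_1=W\setminus W_0$ — is exactly the paper's. Cases (1), (2), (3) are sound: the paper handles (1) and (3) via the truncations $\tau_{\leq n}$, $\tau_{>n}$ and Lemma \ref{basic truncation lemma}, which is precisely the homotopy (co)limit argument you invoke; your Cartan--Eilenberg spectral sequence for (1) is a harmless variant, and your case (2) is the paper's.

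There is, however, a genuine gap in the differential analysis of case (4). To know that $\prod_{\fp\in W_1}T^\bullet_\fp$ is a subcomplex of $X$ you must prove that the composite
\[
\prod_{\fp\in W_1}T^i_\fp \hookrightarrow X^i \xrightarrow{\ d^i\ } X^{i+1} \twoheadrightarrow T^{i+1}_\fq
\]
vanishes for every $\fq\in W_0$, and in the base case $\dim W=0$ you similarly need $\prod_{\fp\in W\setminus\{\fq\}}T^i_\fp\to T^{i+1}_\fq$ to vanish before you may claim $X\cong\prod_{\fp\in W}T^\bullet_\fp$ as complexes. Your argument only establishes vanishing of the individual components $d^i_{\fp,\fq}\colon T^i_\fp\to T^{i+1}_\fq$. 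But a map out of an \emph{infinite} direct product is not determined by its restrictions to the factors, so componentwise vanishing does not, by itself, give the required vanishing of the whole block; and the adjunction trick you use for a single $T^i_\fp$ (``$T^i_\fp$ is an $R_\fp$-module'') does not pass to the product, since $\prod_{\fp\in W_1}T^i_\fp$ is not an $R_\fp$-module for any single $\fp$. The correct justification is the one the paper records in Remark \ref{remark for subcomplex} via Lemma \ref{localizing and colocalizing}, and it works directly at the level of the whole product: $\prod_{\fp\in W_1}T^i_\fp$ is cotorsion flat with cosupport in $W_1\subseteq(\overline{W_0}^s)^c$, hence lies in $\cC^{(\overline{W_0}^s)^c}={}^\perp\cC^{\overline{W_0}^s}$, while $\prod_{\fq\in W_0}T^{i+1}_\fq\in\cC^{W_0}\subseteq\cC^{\overline{W_0}^s}$; therefore $\Hom_R$ between them is zero outright. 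Once this replaces your componentwise argument, the short exact sequence of complexes you write down is correct, your secondary worry about it being ``merely a filtration'' evaporates (degreewise it is the split sequence $0\to\prod_{W_1}\to\prod_W\to\prod_{W_0}\to 0$), and the induction closes exactly as in the paper.
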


To prove this proposition, we use the next elementary lemma.
In the lemma, for a complex $X$ and  $n\in \mathbb{Z}$, we define the truncations $\tau_{\leq n}X$ and $\tau_{> n}X$ as follows (cf. \cite[Chapter I, \S 7]{Ha}):
\begin{align*}
\tau_{\leq n}X&=(\cdots\to X^{n-1}\to X^{n}\to 0\to\cdots )\\
\tau_{> n}X&=(\cdots\to 0\to X^{n+1}\to X^{n+2}\to\cdots )
\end{align*}

\begin{lemma}\label{basic truncation lemma}
Let $W$ be a subset of $\Spec R$.
We assume that $\tau_{\leq n} X\in \cC^W$ (resp. $\tau_{> n}X\in \cL_W$) for all $n\geq 0$ (resp. $n< 0$).
Then we have $X\in  \cC^W$ (resp. $X\in \cL_W$).
\end{lemma}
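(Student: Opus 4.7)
The plan is to treat both assertions together since they are formally dual, and both will come from the canonical truncation triangle
$$\tau_{\leq n}X \to X \to \tau_{>n}X \to \tau_{\leq n}X[1],$$
combined with the standard fact that $\RHom_R(\kappa(\fp),-)$ and $-\Lotimes \kappa(\fp)$ respect cohomological ranges up to a shift by the (co)homological dimension of their argument.

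For the $\cC^W$ statement, I would fix a prime $\fp\notin W$ and an integer $i$ and aim to show $\Ext^i_R(\kappa(\fp),X)=0$. Set $n=\max\{i,0\}\geq 0$ and apply $\RHom_R(\kappa(\fp),-)$ to the truncation triangle. By the hypothesis $\tau_{\leq n}X\in\cC^W$, the complex $\RHom_R(\kappa(\fp),\tau_{\leq n}X)$ is zero, so its contribution to the long exact sequence vanishes in every degree. For the other term, the hyperext spectral sequence $E_2^{p,q}=\Ext^q_R(\kappa(\fp),H^p(\tau_{>n}X))$ has contributions only when $p>n$ and $q\geq 0$, so $H^j\RHom_R(\kappa(\fp),\tau_{>n}X)=0$ for all $j\leq n$, and in particular for $j\in\{i-1,i\}$. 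The long exact sequence then pinches $\Ext^i_R(\kappa(\fp),X)$ between two zeros.

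For the $\cL_W$ statement the argument runs in parallel. Fix $\fp\notin W$ and $i$, and pick any integer $n$ with $n<\min\{i,0\}$. Tensoring the truncation triangle with $\kappa(\fp)$, the hypothesis $\tau_{>n}X\in\cL_W$ yields $\tau_{>n}X\Lotimes \kappa(\fp)=0$, hence an isomorphism $\tau_{\leq n}X\Lotimes\kappa(\fp)\cong X\Lotimes\kappa(\fp)$. The hypertor spectral sequence $E^2_{p,q}=\Tor^R_q(\kappa(\fp),H^p(\tau_{\leq n}X))$ contributes only when $p\leq n$ and $q\geq 0$, so $H^j(\tau_{\leq n}X\Lotimes\kappa(\fp))=0$ for every $j>n$, which includes $j=i$.

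The main obstacle is mild and purely bookkeeping: one must control the cohomological range in which $\RHom_R(\kappa(\fp),\tau_{>n}X)$ and $\tau_{\leq n}X\Lotimes\kappa(\fp)$ can be nonzero. These bounds follow immediately from the hyper(co)homology spectral sequences above, or equivalently by observing that the canonical truncations are already supported in the appropriate half-line of degrees and computing termwise against a projective (resp.\ flat) resolution of $\kappa(\fp)$ concentrated in non-positive degrees. No further ingredient is needed, and the two cases can be written up as a single argument by duality.
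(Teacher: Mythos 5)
Your argument is correct and goes by a genuinely different route than the paper's. The paper's own proof is one sentence: observe that $X$ is the homotopy limit of the inverse system $\{\tau_{\leq n}X\}_{n\geq 0}$ (resp.\ the homotopy colimit of $\{\tau_{>n}X\}_{n<0}$), and use that $\cC^W$ is closed under arbitrary products (resp.\ $\cL_W$ under sums) and hence under homotopy (co)limits, citing B\"okstedt--Neeman for the $\lim^1$ vanishing. What you do instead is more hands-on and arguably more elementary: for each fixed $\fp\notin W$ and $i\in\Z$, pick a truncation level $n$ so that one side of the triangle dies by hypothesis, and kill the other side in the relevant cohomological degree by a degree count against a projective/flat resolution of $\kappa(\fp)$ concentrated in nonpositive degrees. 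This avoids any appeal to homotopy limit machinery or to closure of $\cC^W$ under products, at the cost of a slightly longer bookkeeping argument. Both prove the same thing; neither subsumes the other in any essential way.

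Two small points to tidy up. First, the $\tau$ in this paper are the \emph{brutal} truncations (the cohomological ones are called $\sigma$ and appear only later, in Section 8). So there is no natural map $\tau_{\leq n}X\to X$; rather, $\tau_{>n}X$ is the subcomplex and the triangle you should write is
$$\tau_{>n}X\longrightarrow X\longrightarrow\tau_{\leq n}X\longrightarrow\tau_{>n}X[1].$$
Your long-exact-sequence argument is unaffected, since the two neighbours of $\Ext^i_R(\kappa(\fp),X)$ are still $\Ext^i_R(\kappa(\fp),\tau_{>n}X)$ and $\Ext^i_R(\kappa(\fp),\tau_{\leq n}X)$, but the phrase ``canonical truncation triangle'' and the orientation you wrote are misleading and should be fixed. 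Second, the hyper(co)homology spectral sequences you invoke have convergence caveats for unbounded complexes; the ``equivalently'' you offer at the end --- noting that $\tau_{>n}X$ is \emph{termwise} zero in degrees $\leq n$ and computing $\Hom_R(P,\tau_{>n}X)$ degreewise with $P$ a projective resolution of $\kappa(\fp)$ in nonpositive degrees (and dually for $\tau_{\leq n}X\otimes_R P$ with $P$ flat) --- is the cleaner and unconditionally valid version of the bound, and I would lead with that rather than the spectral sequence. With those adjustments the write-up is complete.
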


Recall that $\cC^W$ (resp. $\cL_W$) is closed under arbitrary direct products (resp. sums). Then one can show this lemma by using homotopy limit (resp. colimit), see \cite[Remark 2.2, Remark 2.3]{BN}.\qed\bigskip

\noindent {\it Proof of Proposition \ref{cosupport key prop}.}
Case (1): We have $\tau_{\leq n}X\in \cC^W$ for all $n\geq 0$, since $\tau_{\leq n}X$ are bounded.
Thus Lemma \ref{basic truncation lemma} implies that $X\in \cC^W$. 

Case (2): By (\ref{derived local homology}), Proposition \ref{not necessarily K-flat} and (\ref{completion of cotorsion flat}),
it holds that $\lambda^{V(\fa)}X\cong \LLambda^{V(\fa)}X\cong \Lambda^{V(\fa)}X\cong X$.
Hence $X$ belongs to $\cC^{V(\fa)}$.

Case (3): It follows from the case (1) that $\tau_{> n}X\in \cC^W$ for all $n<0$.
Moreover, we have $\cC^W=\cL_W$ by Lemma \ref{localizing and colocalizing}. 
Thus Lemma \ref{basic truncation lemma} implies that $X\in \cL_W=\cC^W$.

Case (4): Under Notation \ref{Enochs rep}, we write $X^i=\prod_{\fp\in W}T^i_\fp$ for $i\in \mathbb{Z}$.
Set $n=\dim W$, and use induction on $n$.
First, suppose that $n=0$.
It is seen from Remark \ref{remark for subcomplex} that 
$X$ is the direct product of complexes of the form $ Y^{\{\fp\}}=(\cdots\to T_\fp^i\to T_\fp^{i+1}\to\cdots)$ for $\fp\in W$.
Furthermore, by the cases (2) and (3), we have $\cosupp Y^{\{\fp\}}\subseteq V(\fp)\cap U(\fp)=\{\fp\}$.
Thus it holds that $X\cong \prod_{\fp\in W}Y^{\{\fp\}}\in \cC^W$.

Next, suppose that $n>0$. Set $W_0=\max W$ and $W_1=W\backslash W_0$.
We write $Y=(\cdots\to  \prod_{\fp\in W_1}T^i_\fp\to \prod_{\fp\in W_1}T^{i+1}_\fp\to \cdots )$, which is a subcomplex of $X$ by Remark \ref{remark for subcomplex}. 
Hence there is a short exact sequence of complexes;
$$\begin{CD}
0@>>>Y@>>>X@>>>X/Y@>>>0,
\end{CD}$$
where
$X/Y=(\cdots\to  \prod_{\fp\in W_0}T^i_\fp\to \prod_{\fp\in W_0}T^{i+1}_\fp\to \cdots ).$
Note that $\dim W_0=0$ and $\dim W_1=n-1$.
Then we have $\cosupp X/Y\subseteq W_0$, by the argument above.
Moreover the inductive hypothesis implies that $\cosupp Y\subseteq W_1$.
Hence it holds that $\cosupp X\subseteq W_0\cup W_1=W$.
\qed\medskip

Under some assumption, it is possible to extend the condition (4) in Proposition \ref{cosupport key prop}
to the case where $\dim W$ is infinite, see Remark \ref{infinite case}. See also \cite[Theorem 2.5]{PT1}.

\begin{corollary}\label{compute lambda}
Let $X$ be a complex of cotorsion flat $R$-modules and $W$ be a specialization-closed subset of $\Spec R$.
Under Notation \ref{Enochs rep}, we write 
$$X=(\cdots \to \prod_{\fp\in \Spec R} T^i_\fp \to \prod_{\fp\in \Spec R}  T^{i+1}_\fp\to \cdots ).$$ 
Suppose that one of the conditions in Proposition \ref{cosupport key prop} holds.
Then it holds that
\begin{align*}
\gamma_{W^c}X\cong& (\cdots \to \prod_{\fp\in W^c} T^i_\fp\to \prod_{\fp\in W^c} T^{i+1}_\fp\to \cdots ),\\
\lambda^{W}X\cong& (\cdots \to \prod_{\fp\in W} T^i_\fp \to \prod_{\fp\in W}  T^{i+1}_\fp\to \cdots ).
\end{align*}
\end{corollary}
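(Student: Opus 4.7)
The plan is to identify $\gamma_{W^c}X$ and $\lambda^{W}X$ with the subcomplex and quotient complex obtained by splitting each term of $X$ according to membership of $\fp$ in $W^c$ or $W$, and then to recognize the resulting short exact sequence as the canonical localization triangle of Lemma \ref{basic lemma}.

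First, I would set up the candidate. Since $W$ is specialization-closed, Remark \ref{remark for subcomplex} applies (with $V=W$), so
$$Y=\bigl(\cdots\to \prod_{\fp\in W^c} T^i_\fp\to \prod_{\fp\in W^c} T^{i+1}_\fp\to \cdots\bigr)$$
is a genuine subcomplex of $X$, with quotient
$$X/Y=\bigl(\cdots\to \prod_{\fp\in W} T^i_\fp\to \prod_{\fp\in W} T^{i+1}_\fp\to\cdots\bigr).$$
This yields a short exact sequence $0\to Y\to X\to X/Y\to 0$ of complexes and hence a triangle $Y\to X\to X/Y\to Y[1]$ in $\cD$. By Lemma \ref{basic lemma} together with (\ref{star}), it suffices to verify that $Y\in \cL_{W^c}={}^\perp\cC^W$ and $X/Y\in \cC^W$; the uniqueness clause in that lemma will then give the two desired isomorphisms.

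Both memberships reduce to Proposition \ref{cosupport key prop}. For $Y$, every term has cosupport contained in $W^c$, which is generalization-closed since $W$ is specialization-closed; hence case (3) of Proposition \ref{cosupport key prop} applies \emph{unconditionally} and shows $Y\in \cC^{W^c}$, which equals $\cL_{W^c}$ by Lemma \ref{localizing and colocalizing}. For $X/Y$, the terms have cosupport in $W$, and each of the four conditions we are allowed to assume on $X$ transfers to $X/Y$: left-boundedness is inherited, while (2), (3), and (4) are properties of $W$ alone. So Proposition \ref{cosupport key prop} gives $X/Y\in \cC^W$.

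There is no serious obstacle in this argument; the only delicate point is the very first one, namely that the differentials of $X$ do restrict to $Y$ so that we have an honest short exact sequence of complexes — and this is precisely the vanishing recorded in Remark \ref{remark for subcomplex}. Once that is granted, the identification with $\gamma_{W^c}X$ and $\lambda^W X$ is purely formal via Lemma \ref{basic lemma}.
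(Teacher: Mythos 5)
Your argument coincides, step for step, with the paper's own proof: build the subcomplex $Y$ via Remark \ref{remark for subcomplex}, form the resulting triangle $Y\to X\to X/Y\to Y[1]$, show $Y\in\cC^{W^c}=\cL_{W^c}$ using case (3) of Proposition \ref{cosupport key prop} together with Lemma \ref{localizing and colocalizing}, show $X/Y\in\cC^W$ again by Proposition \ref{cosupport key prop}, and invoke the uniqueness in Lemma \ref{basic lemma}. The only addition is your explicit remark that each of the four hypotheses transfers from $X$ to $X/Y$, which is a small but welcome clarification.
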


\begin{proof}
Since $Y=(\cdots \to \prod_{\fp\in W^c} T^i_\fp\to \prod_{\fp\in W^c} T^{i+1}_\fp\to \cdots )$ is a subcomplex of $X$ by Remark \ref{remark for subcomplex}, there is a triangle in $\cD$;
$$\begin{CD}
Y@>>>X@>>>X/Y@>>>Y[1],
\end{CD}$$
where $X/Y=(\cdots \to \prod_{\fp\in W} T^i_\fp \to \prod_{\fp\in W}  T^{i+1}_\fp\to \cdots )$.
By Proposition \ref{cosupport key prop}, we have $X/Y\in \cC^W$.
Moreover, since $W^c$ is generalization-closed, it holds that $Y\in \cC^{W^c}={}^\perp \cC^{W}$ by Proposition \ref{cosupport key prop} and Lemma \ref{localizing and colocalizing}.
Therefore we conclude that $\gamma_{W^c}X\cong Y$ and $\lambda^{W}X\cong X/Y$ by Lemma \ref{basic lemma}.
\end{proof}

Let $X$ be a complex of cotorsion flat $R$-modules and $S$ be a multiplicatively closed subset of $R$.
We assume that $X$ is left bounded, or $\dim R$ is finite. 
It then follows from the corollary and (\ref{colocalization of cotorsion flat}) that 
$$\gamma_{U_S}X\cong (\cdots \to \prod_{\fp\in U_S} T^i_\fp\to \prod_{\fp\in U_S} T^{i+1}_\fp\to \cdots )\cong\Hom_R(S^{-1}R,X).$$
We now recall that $\gamma_{U_S} \cong \RHom_R(S^{-1}R,-)$, see \cite[Proposition 3.1]{NY}.
Hence it holds that $\RHom_R(S^{-1}R,X)\cong \Hom_R(S^{-1}R, X)$. This fact also follows from Lemma \ref{flat and pure-injective}.

\section{Localization functors via \v{C}ech complexes}

In this section, we introduce a new notion of \v{C}ech complexes to calculate $\lambda^WX$, where $W$ is a general subset $W$ of $\Spec R$ and $X$ is a complex of flat $R$-modules.\medskip

We first make the following notations.

\begin{notation}
Let $W$ be a subset of $\Spec R$ with $\dim W=0$.
We define a functor $\bar{\lambda}^{W}:\Mod R\to \Mod R$ by
$$\bar{\lambda}^{W}=\prod_{\fp\in W}\Lambda^{V(\fp)} (-\otimes_R R_\fp).$$
For a prime ideal $\fp$ in $W$, 
we write $\bar{\eta}^{\{\fp\}}:\id_{\Mod R}\to  \bar{\lambda}^{\{\fp\}}=\Lambda^{V(\fp)} (-\otimes_R R_\fp)$ for the composition of the natural morphisms $\id_{\Mod R}\to (-)\otimes_R R_\fp$ and $(-)\otimes_R R_\fp \to \Lambda^{V(\fp)} (-\otimes_R R_\fp)$.
Moreover, $\bar{\eta}^{W}:\id_{\Mod R}\to \bar{\lambda}^{W}=\prod_{\fp\in W}\bar{\lambda}^{\{\fp\}}$ denotes the product of the morphisms $\bar{\eta}^{\{\fp\}}$ for $\fp\in W$.
\end{notation}

\begin{notation}\label{Cech notation}
Let $\{W_i\}_{0\leq i\leq n}$ be a family of subsets of $\Spec R$, and suppose that $\dim W_i=0$ for $0\leq i\leq n$.
For a sequence $(i_m,\ldots i_1, i_0)$ of integers  with $0\leq i_0<i_1<\cdots<i_m\leq n$, 
we write
$$\bar{\lambda}^{(i_m,\ldots i_1, i_0)}=\bar{\lambda}^{W_{i_m}}\cdots\bar{\lambda}^{W_{i_1}}\bar{\lambda}^{W_{i_0}}.$$
If the sequence is empty, then we use the general convention that $\lambda^{(\ )}=\id_{\Mod R}$.
For an integer $s$ with $0\leq s\leq m$, 
$\bar{\eta}^{W_{i_s}}:\id_{\Mod R}\to \bar{\lambda}^{(i_s)}$ induces a morphism 
$$\bar{\lambda}^{(i_m,\ldots, i_{s+1})}\bar{\eta}^{W_{i_s}}\bar{\lambda}^{(i_{s-1},\ldots, i_{0})}:\bar{\lambda}^{(i_m,\ldots,\widehat{i_s},\ldots, i_{0})}\longrightarrow \bar{\lambda}^{(i_m,\ldots, i_{0})},$$
where we mean by $\widehat{i_s}$ that $i_s$ is omitted.
We set 
$$\partial^{m-1}: \prod_{0\leq i_0<\cdots<i_{m-1}\leq n}\bar{\lambda}^{(i_{m-1},\ldots, i_0)}\longrightarrow \prod_{0\leq i_0<\cdots<i_{m}\leq n}\bar{\lambda}^{(i_{m},\ldots, i_0)}$$
to be the product of the morphisms $\bar{\lambda}^{(i_m,\ldots,\widehat{i_s},\ldots, i_{0})}\to \bar{\lambda}^{(i_m,\ldots, i_{0})}$ multiplied by $(-1)^s$.
\end{notation}

\begin{remark}\label{remark cech complex}
Let $W_0, W_1 \subseteq \Spec R$ be subsets such that $\dim W_0=\dim W_1=0$.
As with Remark \ref{remark for main result 1} (ii), the following diagram is commutative:
$$\begin{CD}
\id_{\Mod R}@>\bar{\eta}^{W_0}>>\bar{\lambda}^{W_0}\\
@VV\bar{\eta}^{W_1}V@VV\bar{\eta}^{W_1}\bar{\lambda}^{W_0}V\\
\bar{\lambda}^{W_1}@>\bar{\lambda}^{W_1}\bar{\eta}^{W_0}>>\bar{\lambda}^{W_1}\bar{\lambda}^{W_0}
\end{CD}$$
\end{remark}\medskip

\begin{definition}\label{Cech def}
Let $\mathbb{W}=\{W_i\}_{0\leq i\leq n}$ be a family of subsets of $\Spec R$, and suppose that $\dim W_i=0$ for $0\leq i\leq n$.
By Remark \ref{remark cech complex}, it is possible to construct a \v{C}ech complex of functors of the following form;
$$\prod_{0\leq i_0\leq n}\bar{\lambda}^{(i_0)}\xrightarrow{\partial^0}\prod_{0\leq i_0<i_1\leq n}\bar{\lambda}^{(i_1,i_0)}\to\cdots\to  \prod_{0\leq i_0<\cdots<i_{n-1}\leq n}\bar{\lambda}^{(i_{n-1},\ldots, i_0)}\xrightarrow{\partial^{n-1}}  \bar{\lambda}^{(n,\ldots, 0)},$$
which we denote by $L^{\mathbb{W}}$ and call it {\it the \v{C}ech complex with respect to $\mathbb{W}$}.

For an $R$-module $M$, $L^{\mathbb{W}}M$ denotes the complex of $R$-modules obtained by $L^{\mathbb{W}}$ in a natural way, where it is concentrated in degrees from $0$ to $n$.
We call $L^{\mathbb{W}}M$ {\it the \v{C}ech complex of $M$ with respect to $\mathbb{W}$}.
Note that there is a chain map $\ell^{\mathbb{W}}M:M\to  L^{\mathbb{W}}M$ induced by the map $M\to \prod_{0\leq i_0\leq n}\bar{\lambda}^{(i_0)}M$ in degree $0$, which is the product of $\bar{\eta}
^{W_{i_0}}M:M\to \bar{\lambda}^{(i_0)}M$ for $0\leq i_0\leq n$.

More generally, we regard every term of $L^{\mathbb{W}}$ as a functor $C(\Mod R)\to C(\Mod R)$, where $C(\Mod R)$ denotes the category of complexes of $R$-modules.
Then $L^{\mathbb{W}}$ naturally sends a complex $X$ to a double complex, which we denote by  $L^{\mathbb{W}}X$. 
Furthermore, we write $\tot L^{\mathbb{W}}X$ for the total complex of $L^{\mathbb{W}}X$. 
The family of chain maps $\ell^{\mathbb{W}}X^j:X^j\to L^{\mathbb{W}}X^j$ for $j\in\mathbb{Z}$ induces a morphism $X\to L^{\mathbb{W}}X$ as double complexes, from which we obtain a chain map $\ell^{\mathbb{W}}X:X\to \tot L^{\mathbb{W}}X$.
\end{definition}

\begin{remark}\label{remark natural chain map}
(i) We regard $\tot L^{\mathbb{W}}$ as a functor $C(\Mod R)\to C(\Mod R)$.
Then $\ell^{\mathbb{W}}$ is a morphism $\id_{C(\Mod R)}\to \tot L^{\mathbb{W}}$ of functors.
Moreover, if $M$ is an $R$-module, then $\tot L^{\mathbb{W}}M=L^{\mathbb{W}}M$.

(ii) Let $a,b \in  \Z\cup \{\pm\infty\}$ with $a\leq b$ and  $X$ be a complex of $R$-modules such that 
$X^i=0$ for $i\notin [a,b]$.
Then it holds that $(\tot L^{\mathbb{W}}X)^i=0$ for $i\notin [a,b+n]$, where $n$ is the number given to  $\mathbb{W}=\{W_i\}_{0\leq i\leq n}$.

(iii) Let $X$ be a complex of flat $R$-modules.
Then we see that $\tot L^{\mathbb{W}}X$ consists of cotorsion flat $R$-modules with cosupports in $\bigcup_{0\leq i\leq n}W_i$.
\end{remark}

\begin{definition}\label{star-partition}
Let $W$ be a non-empty subset of $\Spec R$ and $\{W_i\}_{0\leq i\leq n}$ be a family of subsets of $W$.
We say that $\{W_i\}_{0\leq i\leq n}$ is {\it a system of slices of $W$} if the following conditions hold:
\begin{enumerate}
\item $W=\bigcup_{0\leq i\leq n}W_i$;
\item $W_i\cap W_j=\emptyset$ if $i\neq j$;
\item $\dim W_i= 0$ for $0\leq i\leq n$;
\item $W_i$ is specialization-closed in $\bigcup_{i\leq j\leq n}W_j$ for each $0\leq i \leq n$.
\end{enumerate}
\end{definition}\medskip

Compare this definition with the filtrations discussed in \cite[Chapter IV; \S 3]{Ha}.

If $\dim W$ is finite, then there exists at least one system of slices of $W$.
Conversely, if there is a system of slices of $W$, then $\dim W$ is finite.

\begin{proposition}\label{main result 2}
Let $W$ be a subset of $\Spec R$ and 
$\mathbb{W}=\{W_i\}_{0\leq i\leq n}$ be a system of slices of $W$.
Then, for any flat $R$-module $F$, there is an isomorphism in $\cD$;
$$\lambda^WF\cong L^{\mathbb{W}}F.$$
Under this isomorphism, $\ell^{\mathbb{W}}F:F\to L^{\mathbb{W}}F$ coincides with $\eta^{W}F:F\to \lambda^{W}F$ in $\cD$.
\end{proposition}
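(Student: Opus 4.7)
The proof proceeds by induction on the number of slices $n+1$ and applies Lemma \ref{basic lemma}, so it suffices to show (i) $L^{\mathbb{W}}F \in \cC^W$ and (ii) $\lambda^{\{\fp\}}\ell^{\mathbb{W}}F$ is an isomorphism for every $\fp \in W$ (equivalent to $\mathrm{fib}(\ell^{\mathbb{W}}F) \in \cL_{W^c}$ by Remark \ref{characterization perp}(i)). The base case $n=0$ is immediate: $L^{\mathbb{W}}F = \bar{\lambda}^{W_0}F$ in degree $0$, and $\lambda^W F \cong \bar{\lambda}^{W_0}F$ via $\eta^W F = \ell^{\mathbb{W}}F$ follows from Theorem \ref{lambda dim W=0} and Proposition \ref{not necessarily K-flat}. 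Claim (i) follows from Proposition \ref{cosupport key prop}(1), since each term $\bar{\lambda}^{(i_m,\ldots,i_0)}F$ is cotorsion flat (Proposition \ref{flatness}, Lemma \ref{flat RHom lemma}, Corollary \ref{Enochs cor}) with cosupport in $W_{i_m}\subseteq W$ and $L^{\mathbb{W}}F$ is bounded.

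For the inductive step and claim (ii), set $\mathbb{W}' = \{W_i\}_{1 \leq i \leq n}$, a system of slices of $W' = \bigcup_{i\geq 1}W_i$ with $n$ elements. Filter $L^{\mathbb{W}}F$ by the smallest Čech index: the terms with $i_0 = 0$ form a subcomplex $K$ (the Čech differential only inserts indices, so a $0$ once present stays) with quotient $L^{\mathbb{W}'}F$. A degree-by-degree inspection, tracking signs in the Čech differential, identifies $K$ at the chain level with $\mathrm{Cone}(\ell^{\mathbb{W}'}\bar{\lambda}^{W_0}F)[-1]$; since $\bar{\lambda}^{W_0}F$ is flat by Proposition \ref{flatness}, the inductive hypothesis applied to $\mathbb{W}'$ and to this flat module identifies $K$ in $\cD$ with the fiber of $\eta^{W'}\lambda^{W_0}F: \lambda^{W_0}F \to \lambda^{W'}\lambda^{W_0}F$, and the quotient $L^{\mathbb{W}'}F$ with $\lambda^{W'}F$ via $\ell^{\mathbb{W}'}F = \eta^{W'}F$.

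For $\fp\in W = W_0 \sqcup W'$ I split into two cases. If $\fp \in W'$, then $\{\fp\} \subseteq W'$ makes $\lambda^{\{\fp\}}\eta^{W'}$ invertible on any object, so $\lambda^{\{\fp\}}(\eta^{W'}\lambda^{W_0}F)$ is an isomorphism and $\lambda^{\{\fp\}}K = 0$; the short exact sequence then gives $\lambda^{\{\fp\}}L^{\mathbb{W}}F \cong \lambda^{\{\fp\}}L^{\mathbb{W}'}F \cong \lambda^{\{\fp\}}F$, and the composite $F \to L^{\mathbb{W}}F \to L^{\mathbb{W}'}F$ coincides with $\ell^{\mathbb{W}'}F = \eta^{W'}F$ under the induction, forcing $\lambda^{\{\fp\}}\ell^{\mathbb{W}}F$ to be an isomorphism. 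If $\fp \in W_0$, then specialization-closedness of $W_0$ in $W$ forbids any $\fq \in W'$ with $\fp \subsetneq \fq$, so $\{\fp\}$ is specialization-closed in $\{\fp\}\cup W'$; Corollary \ref{composition 2} gives $\lambda^{\{\fp\}}\lambda^{W'} = \lambda^{\emptyset} = 0$, hence $\lambda^{\{\fp\}}L^{\mathbb{W}'}F = 0$ and $\lambda^{\{\fp\}}\lambda^{W'}\lambda^{W_0}F = 0$. Applying $\lambda^{\{\fp\}}$ to the fiber triangle for $K$ gives $\lambda^{\{\fp\}}K \cong \lambda^{\{\fp\}}\lambda^{W_0}F \cong \lambda^{\{\fp\}}F$ (using invertibility of $\lambda^{\{\fp\}}\eta^{W_0}$ for $\fp \in W_0$), the SES then gives $\lambda^{\{\fp\}}L^{\mathbb{W}}F \cong \lambda^{\{\fp\}}K$, and composing $\lambda^{\{\fp\}}\ell^{\mathbb{W}}F$ with the chain-level projection $L^{\mathbb{W}}F \to \bar{\lambda}^{W_0}F$ (onto the $\bar{\lambda}^{W_0}F$ summand of $L^{\mathbb{W}}F^0$, zero in higher degrees) recovers $\lambda^{\{\fp\}}\eta^{W_0}F$. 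Since that projection becomes invertible after $\lambda^{\{\fp\}}$, we conclude $\lambda^{\{\fp\}}\ell^{\mathbb{W}}F$ is an isomorphism. Both cases together yield (ii), and Lemma \ref{basic lemma} delivers $L^{\mathbb{W}}F \cong \lambda^W F$ with $\ell^{\mathbb{W}}F$ identified with $\eta^W F$.

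The main obstacle is the chain-level identification of $K$ with $\mathrm{Cone}(\ell^{\mathbb{W}'}\bar{\lambda}^{W_0}F)[-1]$ and its translation into the derived category via the inductive hypothesis applied to the flat module $\bar{\lambda}^{W_0}F$, which is itself produced by the very Čech construction under analysis. The sign bookkeeping in the Čech differential is tedious but routine, and recognizing the chain-level projection $L^{\mathbb{W}}F \to \bar{\lambda}^{W_0}F$ as the fiber-structure map of the derived triangle is what makes the $\fp \in W_0$ case go through.
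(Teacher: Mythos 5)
Your proof is correct, but it follows a genuinely different route from the paper's. The paper's proof identifies the \emph{entire} complex $L^{\mathbb{W}}F$ at the chain level with the cone of $\bar g[-1]$, where $\bar{g}\colon L^{\mathbb{U}}F\oplus\bar{\lambda}^{W_0}F\to L^{\mathbb{U}}\bar{\lambda}^{W_0}F$ uses the system of slices $\mathbb{U}$ for $W\setminus W_0$. It then pulls in Theorem \ref{main result 1} and Remark \ref{remark for the natural morphism} as a black box: the Mayer--Vietoris triangle $\lambda^WF\to\lambda^{W_1}F\oplus\lambda^{W_0}F\to\lambda^{W_1}\lambda^{W_0}F$ is matched against the cone triangle, and the identification of $\ell^{\mathbb{W}}F$ with $\eta^WF$ is exactly what Remark \ref{remark for the natural morphism} supplies. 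You instead filter $L^{\mathbb{W}}F$ by whether the smallest \v{C}ech index is $0$, obtaining the short exact sequence $0\to K\to L^{\mathbb{W}}F\to L^{\mathbb{W}'}F\to 0$, and then verify the hypotheses of Lemma \ref{basic lemma} directly: membership of $L^{\mathbb{W}}F$ in $\cC^W$ via Proposition \ref{cosupport key prop}, and vanishing of $\lambda^{\{\fp\}}$ applied to the fiber of $\ell^{\mathbb{W}}F$ for every $\fp\in W$, split into the cases $\fp\in W'$ (using that $\lambda^{\{\fp\}}\eta^{W'}$ is invertible) and $\fp\in W_0$ (using specialization-closedness and Corollary \ref{composition 2} to kill $\lambda^{\{\fp\}}\lambda^{W'}$). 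In effect you re-derive, localized at each $\fp$, the key step in the proof of Theorem \ref{main result 1} rather than quoting it. This costs a second application of the inductive hypothesis (to $\bar{\lambda}^{W_0}F$ as well as to $F$) and a chain-level cone identification whose sign bookkeeping is up to a degreewise sign twist, but buys independence from Theorem \ref{main result 1} and Remark \ref{remark for the natural morphism}. The paper's version is shorter given the machinery already in place; yours is more self-contained within Section 7.
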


\begin{proof}
We use induction on $n$, which is the number given to $\mathbb{W}=\{W_i\}_{0\leq i\leq n}$.
Suppose that $n=0$. It then holds that $L^{\mathbb{W}}F=\bar{\lambda}^{W_0}F=\bar{\lambda}^{W}F$ and $\ell^{\mathbb{W}}F=\bar{\eta}^{W_0}F=\bar{\eta}^WF$.
Hence this proposition follows from Theorem \ref{lambda dim W=0}.

Next, suppose that $n>0$, and write $U=\bigcup_{1\leq i\leq n}W_i$. 
Setting $U_{i-1}=W_{i}$, we obtain a system of slices $\mathbb{U}=\{U_i\}_{0\leq i\leq n-1}$ of $U$.
Consider the following two squares, where the first (resp. second) one is in $C(\Mod R)$ (resp. $\cD$): 

$$\begin{CD}
F@>\bar{\eta}^{W_0}F>> \bar{\lambda}^{W_0}F\\
@VV\ell^{\mathbb{U}}FV@VV\ell^{\mathbb{U}}\bar{\lambda}^{W_0}FV\\
L^{\mathbb{U}}F@>L^{\mathbb{U}}\bar{\eta}^{W_0}F>> L^{\mathbb{U}}\bar{\lambda}^{W_0}F\\
\end{CD}
\hspace{2cm}
\begin{CD}
F@>\eta^{W_0}F>>\lambda^{W_0}F \\
@VV\eta^{U}FV@VV\eta^{U}\lambda^{W_0}FV\\
\lambda^{U}F@>\lambda^{U}\eta^{W_0}F>> \lambda^{U}\lambda^{W_0}F
\end{CD}\vspace{2mm}$$
By Remark \ref{remark natural chain map} (i) and Remark \ref{remark for main result 1} (ii), both of them are commutative. 
Moreover, $\lambda^{U}\eta^{W_0}F$ is the unique morphism which makes the right square commutative, because $\lambda^{U}$ is a left adjoint to the inclusion functor $\cC^U\hookrightarrow \cD$. 
Then, regarding the left one as being in $\cD$, we see from the inductive hypothesis that the left one coincides with the right one in $\cD$.

Let $\bar{g}:L^{\mathbb{U}}F\oplus\bar{\lambda}^{W_0}F\to L^{\mathbb{U}}\bar{\lambda}^{W_0}F$ and $\bar{h}:F\to L^{\mathbb{U}}F\oplus\bar{\lambda}^{W_0}F$ be chain maps represented by the following matrices;  
$$\bar{g}=\left(\begin{array}{cc}L^{\mathbb{U}}\bar{\eta}^{W_0}F\  \ \ 
(-1)\cdot\ell^{\mathbb{U}}\bar{\lambda}^{W_0}X
\end{array}\right),\  \bar{h}=\left(\begin{array}{c}
\ell^{\mathbb{U}}F
\\[7pt]
\bar{\eta}^{W_0}F
\end{array}\right).$$
Notice that the mapping cone of $\bar{g}[-1]$ is nothing but $L^\mathbb{W}F$.
Then we can obtain the following morphism of triangles, where it is regarded as being in $\cD$:
\begin{align}
\begin{CD}
F[-1]@>>>F[-1]@>>>0@>>>F\\
@VV\ell^{\mathbb{W}}F[-1]V@VV\bar{h}[-1]V@VVV@VV\ell^{\mathbb{W}}FV\\
L^\mathbb{W}F[-1]@>>> (L^{\mathbb{U}}F\oplus\bar{\lambda}^{W_0}F)[-1] @>\bar{g}[-1]>>  L^{\mathbb{U}}\bar{\lambda}^{W_0}F[-1] @>>> L^\mathbb{W}F\label{diagram in main theorem1}
\end{CD}
\end{align}
Therefore, by Theorem \ref{main result 1} and Remark \ref{remark for the natural morphism}, there is an isomorphism $\lambda^{W}F\cong L^\mathbb{W}F$ such that $\ell^{\mathbb{W}}F$ coincides with $\eta^{W}F$ under this isomorphism.
 \end{proof}\medskip

The following corollary is one of the main results of this paper.

\begin{corollary}\label{cor main result 2}
Let $W$ and $\mathbb{W}=\{W_i\}_{0\leq i\leq n}$ be as above. 
Let $X$ be a complex of flat $R$-modules.
Then there is an isomorphism in $\cD$;
$$\lambda^WX\cong \tot L^{\mathbb{W}}X.$$
Under this isomorphism, $\ell^{\mathbb{W}}X:X\to\tot L^{\mathbb{W}}X$ coincides with  $\eta^WX:X\to \lambda^WX$ in $\cD$.
\end{corollary}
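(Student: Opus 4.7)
The plan is to imitate the inductive proof of Proposition \ref{main result 2}, using induction on $n$ (where $\mathbb{W}=\{W_i\}_{0\leq i\leq n}$) and replacing the flat module $F$ with a complex $X$ of flat $R$-modules and $L^{\mathbb{W}}F$ with $\tot L^{\mathbb{W}}X$ throughout. The two points that need to be checked in the complex setting are (i) that the identifications in the base case hold termwise and are compatible with taking total complexes, and (ii) that $\bar{\lambda}^{W_0}X$ is again a complex of flat $R$-modules, so that the inductive hypothesis applies to it.

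For the base case $n=0$, we have $\tot L^{\mathbb{W}}X = \bar{\lambda}^{W_0}X = \prod_{\fp\in W_0}\Lambda^{V(\fp)}(X\otimes_R R_\fp)$. Since $X\otimes_R R_\fp$ is a complex of flat $R$-modules, Proposition \ref{not necessarily K-flat} gives $\Lambda^{V(\fp)}(X\otimes_R R_\fp)\cong \LLambda^{V(\fp)}(X\otimes_R R_\fp)$ in $\cD$, which by Corollary \ref{lambda p} is $\lambda^{\{\fp\}}X$. Since $\dim W_0=0$, Theorem \ref{lambda dim W=0} yields $\tot L^{\mathbb{W}}X\cong \prod_{\fp\in W_0}\lambda^{\{\fp\}}X\cong \lambda^W X$, and $\ell^{\mathbb{W}}X=\bar{\eta}^{W_0}X$ corresponds to $\eta^W X$ by naturality. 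Here we use that products in $\Mod R$ are exact to pass the termwise equivalences to the product complex.

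For the inductive step, set $U = \bigcup_{1\leq i\leq n}W_i$ and $\mathbb{U}=\{W_{i+1}\}_{0\leq i\leq n-1}$, which is a system of slices of $U$. Since $W_0$ is specialization-closed in $W=W_0\cup U$ by Definition \ref{star-partition}(4), Theorem \ref{main result 1} provides a triangle
\[
\lambda^W X \longrightarrow \lambda^U X \oplus \lambda^{W_0}X \longrightarrow \lambda^U\lambda^{W_0}X \longrightarrow \lambda^W X[1].
\]
The complex $\bar{\lambda}^{W_0}X$ is again a complex of flat $R$-modules: each $\Lambda^{V(\fp)}(X^i\otimes_R R_\fp)$ is flat by Proposition \ref{flatness}, and arbitrary products of flat modules are flat as $R$ is Noetherian. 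Hence the inductive hypothesis applies both to $X$ and to $\bar{\lambda}^{W_0}X$ with the $(n{-}1)$-piece system $\mathbb{U}$, yielding $\tot L^{\mathbb{U}}X\cong \lambda^U X$ and $\tot L^{\mathbb{U}}\bar{\lambda}^{W_0}X\cong \lambda^U\bar{\lambda}^{W_0}X\cong \lambda^U\lambda^{W_0}X$ (the last using the base case), with the $\ell$-morphisms identified with the $\eta$-morphisms. Defining $\bar g: \tot L^{\mathbb{U}}X \oplus \bar{\lambda}^{W_0}X \to \tot L^{\mathbb{U}}\bar{\lambda}^{W_0}X$ analogously to the map in Proposition \ref{main result 2} and realizing $\tot L^{\mathbb{W}}X$ as the mapping cone of $\bar g[-1]$, one obtains a morphism of triangles analogous to (\ref{diagram in main theorem1}). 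Theorem \ref{main result 1} combined with Remark \ref{remark for the natural morphism} then identifies $\tot L^{\mathbb{W}}X$ with $\lambda^W X$, sending $\ell^{\mathbb{W}}X$ to $\eta^W X$.

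The main obstacle is the bookkeeping verification that $\tot L^{\mathbb{W}}X$ realizes as the mapping cone of $\bar g[-1]$ when $X$ is a complex rather than a module. One has to check that totalization of the Čech double complex commutes with the mapping cone construction used in the module case of Proposition \ref{main result 2}, and that signs and differentials line up correctly. This is a routine but tedious extension of the corresponding identification for modules, following from the functoriality of $\tot L^{\mathbb{W}}$ and compatibility of totalization with mapping cones.
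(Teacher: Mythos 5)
Your approach is correct, but it is a genuinely different route from the paper's. You redo the induction from Proposition \ref{main result 2} with the flat module $F$ replaced by a complex $X$, which forces you to re-establish that $\tot L^{\mathbb{W}}X$ is a mapping cone of the analogous map $\bar g[-1]$ for complexes; you correctly flag this totalization/mapping-cone bookkeeping as the main technical burden, and you also correctly handle the subsidiary points (that $\bar\lambda^{W_0}X$ is again a complex of flat modules, and that termwise products compute products in $\cD$ because $\Mod R$ satisfies AB4${}^*$).

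The paper avoids all of this by using Proposition \ref{main result 2} as a black box applied termwise. It embeds $\ell^{\mathbb{W}}X$ in a triangle $C\to X\to \tot L^{\mathbb{W}}X\to C[1]$, notes that $\tot L^{\mathbb{W}}X\in\cC^W$ by Proposition \ref{cosupport key prop} and Remark \ref{remark natural chain map}(iii), and then reduces (via Lemma \ref{basic lemma} and Remark \ref{characterization perp}(i)) to showing $\lambda^{W_i}C=0$ for each $i$, i.e., that $\lambda^{W_i}\ell^{\mathbb{W}}X$ is an isomorphism in $\cD$. Since $X$ and $\tot L^{\mathbb{W}}X$ consist of flat modules, this amounts to showing $\bar\lambda^{W_i}\ell^{\mathbb{W}}X$ is a quasi-isomorphism; viewing $L^{\mathbb{W}}X$ as a double complex and invoking the spectral argument of \cite[Theorem 12.5.4]{KS}, it suffices that each $\bar\lambda^{W_i}\ell^{\mathbb{W}}X^j$ is a quasi-isomorphism, which follows from Proposition \ref{main result 2} applied to $X^j$ and the fact that $\lambda^{W_i}\eta^W X^j$ is an isomorphism because $W_i\subseteq W$. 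This is shorter, sidesteps the mapping-cone identification for complexes entirely, and gets the identification $\ell^{\mathbb{W}}X=\eta^W X$ directly from Lemma \ref{basic lemma}. Your version is sound but would need to spell out the totalization-commutes-with-cone step and the chain-level compatibility of the two commutative squares (mirroring the paper's argument in Proposition \ref{main result 2}) to be complete.
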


\begin{proof}
We embed $\ell^{\mathbb{W}}X:X\to \tot L^{\mathbb{W}}X$ into a triangle 
$$\begin{CD}
C@>>> X@>\ell^{\mathbb{W}}X>> \tot L^{\mathbb{W}}X@>>> C[1].
\end{CD} $$
Proposition \ref{cosupport key prop} and Remark \ref{remark natural chain map} (iii) imply that $\tot L^{\mathbb{W}}X\in \cC^W$.
Thus it suffices to show that $\lambda^{W_i}C=0$ for each $i$, by  Lemma \ref{basic lemma} and Remark \ref{characterization perp} (i).
For this purpose, we prove that $\lambda^{W_i}\ell^{\mathbb{W}}X$ is an isomorphism in $\cD$.
This is equivalent to showing that $\bar{\lambda}^{W_i}\ell^{\mathbb{W}}X$ is a quasi-isomorphism, since $X$ and $\tot L^{\mathbb{W}}X$ consist of flat $R$-modules.

Consider the natural morphism $X\to L^{\mathbb{W}}X$ as double complexes, which is induced by the chain maps $\ell^{\mathbb{W}}X^j:X^j\to L^{\mathbb{W}}X^j$ for $j\in \mathbb{Z}$.
To prove that $\bar{\lambda}^{W_i}\ell^{\mathbb{W}}X$ is a quasi-isomorphism, it is enough to show that $\bar{\lambda}^{W_i}\ell^{\mathbb{W}}X^j$ is a quasi-isomorphism for each $j\in \mathbb{Z}$, see \cite[Theorem 12.5.4]{KS}. 
Furthermore, by Proposition \ref{main result 2}, each $\ell^{\mathbb{W}}X^j$ coincides with $\eta^WX^j:X^j\to \lambda^WX^j$ in $\cD$.
Since $W_i\subseteq W$, it follows from Remark \ref{characterization perp} (ii) that $\lambda^{W_i}\eta^{W}X^j$ is an isomorphism in $\cD$. This means that $\bar{\lambda}^{W_i}\ell^{\mathbb{W}}X^j$ is a quasi-isomorphism.
\end{proof}\medskip

Let $W$ be a subset of $\Spec R$, and suppose that $n=\dim W$ is finite. Then Corollary \ref{cor main result 2} implies $\lambda^{W}R\in \cD^{[0,n]}$. 
We give an example such that $H^{n}(\lambda^{W}R)\neq 0$.

\begin{example}\label{positive cohomology}
Let $(R, \fm)$ be a local ring of dimension $d\geq 1$. Then we have $\dim V(\fm)^c= d-1$.
By Lemma \ref{basic lemma}, there is a triangle
$$\begin{CD}
\gamma_{V(\fm)}R@>>>R@>>>\lambda^{V(\fm)^c}R@>>>\gamma_{V(\fm)}R[1].
\end{CD}$$
Since $\RGamma_{V(\fm)}\cong \gamma_{V(\fm)}$ by (\ref{derived local cohomology}),
Grothendieck's non-vanishing theorem implies that $H^d(\gamma_{V(\fm)}R)$ is non-zero.
Then we see from the triangle that $H^{d-1}(\lambda^{V(\fm)^c}R)\neq 0$.
\end{example}\medskip

We denote by $\cD^{-}$ the full subcategory 
of $\cD$ consisting of complexes $X$ such that $H^i(X)=0$ for $i\gg 0$.
Let $W$ be a subset of $\Spec R$ and $X\in \cD^-$.
If $\dim W$ is finite, then we have $\lambda^WR\in \cD^{-}$ by Corollary \ref{cor main result 2}.
However,  as shown in the following example, it can happen that $\lambda^WR\notin \cD^{-}$ when $\dim W$ is infinite.

\begin{example}\label{infinite cohomology}
Assume that $\dim R=+\infty$, and set $W=\max(\Spec R)$.
Then it holds that $\dim W=0$ and $\dim W^c=+\infty$.
Since each $\fm\in W$ is maximal, there are isomorphisms 
$$\gamma_{W}\cong \RGamma_W\cong \bigoplus_{\fm\in W}\RGamma_{V(\fm)}.$$ 
Thus we see from Example \ref{positive cohomology} that $\gamma_WR\notin \cD^{-}$. 
Then, considering the triangle
$$\begin{CD}
\gamma_{W}R@>>>R@>>>\lambda^{W^c}R@>>>\gamma_{W}R[1],
\end{CD}$$
we have $\lambda^{W^c}R\notin\cD^{-}$. 
\end{example}\medskip

Let $W$ be a subset of $\Spec R$ and $X\in \cC^W$.
Then $\eta^WX:X\to \lambda^WX$ is an isomorphism in $\cD$. 
Thus Remark \ref{remark natural chain map} (iii) and Corollary \ref{cor main result 2} yield the following result.

\begin{corollary}\label{[a,b] general}
Let $W$ be a subset of $\Spec R$, and $\mathbb{W}=\{W_i\}_{0\leq i\leq n}$ be a system of slices of $W$.
Let $X$ be a complex of flat $R$-modules with $\cosupp X\subseteq W$.
Then the chain map $\ell^{\mathbb{W}}X:X\to \tot L^{\mathbb{W}}X$ is a quasi-isomorphism, where $\tot L^{\mathbb{W}}X$ consists of cotorsion flat $R$-modules with cosupports in $W$.
\end{corollary}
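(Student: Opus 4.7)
The plan is to deduce this corollary directly from Corollary \ref{cor main result 2} together with the fact that the natural morphism $\eta^{W}X$ is an isomorphism whenever $X \in \cC^{W}$.

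First I would observe that $\tot L^{\mathbb{W}}X$ is built out of products of the functors $\bar{\lambda}^{W_{i_s}}$ applied termwise to $X$, and each such functor is a composition of localization at a prime followed by completion; in particular, when applied to a flat module it yields a cotorsion flat module whose cosupport is contained in $W_{i_s}$. So every term $(\tot L^{\mathbb{W}}X)^k$ is a finite product of cotorsion flat modules with cosupports in $\bigcup_{0 \leq i \leq n} W_i = W$, hence cotorsion flat with cosupport in $W$. This is exactly the content of Remark \ref{remark natural chain map} (iii) combined with the hypothesis that $\{W_i\}$ is a system of slices of $W$.

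Next, I would invoke Corollary \ref{cor main result 2}: for any complex $X$ of flat $R$-modules, there is an isomorphism $\lambda^{W}X \cong \tot L^{\mathbb{W}}X$ in $\cD$ under which $\ell^{\mathbb{W}}X$ is identified with $\eta^{W}X$. Because $\lambda^{W}$ is the left adjoint to the inclusion $\cC^{W} \hookrightarrow \cD$, the unit $\eta^{W}X$ is an isomorphism in $\cD$ for every $X \in \cC^{W}$. Since by hypothesis $\cosupp X \subseteq W$, i.e.\ $X \in \cC^{W}$, it follows that $\eta^{W}X$ is an isomorphism in $\cD$, and consequently $\ell^{\mathbb{W}}X$ is too. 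A morphism of complexes that is an isomorphism in $\cD$ is by definition a quasi-isomorphism, which finishes the proof.

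There is no real obstacle here; the only slightly subtle point is making sure the two assertions in Corollary \ref{cor main result 2} (an abstract isomorphism in $\cD$ plus the identification of $\ell^{\mathbb{W}}X$ with $\eta^{W}X$) are both needed: the second is what lets us transfer the fact ``$\eta^{W}X$ is invertible on $\cC^{W}$'' to the concrete chain map $\ell^{\mathbb{W}}X$. Everything else is just unwinding definitions.
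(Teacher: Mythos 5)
Your proof is correct and follows exactly the same route as the paper: invoke Corollary \ref{cor main result 2} to identify $\ell^{\mathbb{W}}X$ with $\eta^{W}X$ in $\cD$, note that $\eta^{W}X$ is an isomorphism because $X\in\cC^{W}$, and read off the cotorsion-flat description of the terms from Remark \ref{remark natural chain map} (iii). (One small imprecision: each term of $\tot L^{\mathbb{W}}X$ is a finite product of the modules $\bar{\lambda}^{(i_m,\ldots,i_0)}X^j$, each of which is itself a possibly infinite product $\prod_{\fp}$ of completions of free $R_\fp$-modules — but products of cotorsion flat modules are cotorsion flat, so the conclusion stands.)
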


\begin{remark}
If $d=\dim R$ is finite, then any complex $Y$ is quasi-isomorphic to a $K$-flat complex consisting of cotorsion flat $R$-modules. 
To see this, set $W_{i}=\Set{\fp\in \Spec R | \dim R/\fp= i}$ for $0\leq i\leq d$. Then $\mathbb{W}=\{W_i\}_{0\leq i\leq d}$ is a system of slices of $\Spec R$.
We take a $K$-flat resolution $X$ of $Y$ such that $X$ consists of flat $R$-modules. 
Corollary \ref{[a,b] general} implies that 
$\ell^{\mathbb{W}}X:X\to \tot L^\mathbb{W}X$ is a quasi-isomorphism, and $\tot L^\mathbb{W}X$ consists of cotorsion flat $R$-modules.
At the same time, the chain maps $\ell^{\mathbb{W}}X^i:X^i\to L^\mathbb{W}X^i$ are quasi-isomorphisms for all $i\in \mathbb{Z}$. 
Then it is not hard to see that the mapping cone of $\ell^{\mathbb{W}}X$ is $K$-flat.
Thus $\tot L^{\mathbb{W}}X$ is $K$-flat.
\end{remark}

By Proposition \ref{cosupport key prop} and Corollary \ref{[a,b] general}, we have the next result. 

\begin{corollary}\label{if and only if}
Let $W$ be a subset of $\Spec R$ such that $\dim W$ is finite. Then a complex $X\in \cD$ belongs to $\cC^W$ if and only if $X$ is isomorphic to a complex $Z$ of cotorsion flat $R$-modules such that $\cosupp Z^i\subseteq W$ for all $i\in \mathbb{Z}$.
\end{corollary}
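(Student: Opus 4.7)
My plan is to treat the two directions separately, using the machinery already developed.

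The ``if'' direction is immediate from Proposition~\ref{cosupport key prop}: if $Z$ is a complex of cotorsion flat $R$-modules with $\cosupp Z^i\subseteq W$ for all $i$, then condition~(4) of that proposition applies (since $\dim W<\infty$), giving $Z\in\cC^W$, hence $X\in\cC^W$ as well.

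For the ``only if'' direction, I would start by choosing a $K$-flat resolution $F\to X$ in which $F$ consists of flat $R$-modules; such a resolution always exists. Since $F\cong X$ in $\cD$ and $X\in\cC^W$, we have $\cosupp F\subseteq W$. Because $\dim W$ is finite, a system of slices $\mathbb{W}=\{W_i\}_{0\leq i\leq n}$ of $W$ exists (as noted after Definition~\ref{star-partition}). Now I invoke Corollary~\ref{[a,b] general}: the chain map
\[
\ell^{\mathbb{W}}F:F\longrightarrow \tot L^{\mathbb{W}}F
\]
is a quasi-isomorphism, and $\tot L^{\mathbb{W}}F$ consists of cotorsion flat $R$-modules with cosupports in $W$. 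Setting $Z=\tot L^{\mathbb{W}}F$, we have the desired isomorphism $X\cong Z$ in $\cD$, and by construction each $Z^i$ is a finite direct sum of products of terms of the form $\bar{\lambda}^{(i_m,\ldots,i_0)}F^j$, each a cotorsion flat $R$-module whose cosupport lies in $W_{i_m}\subseteq W$ (cf. Remark~\ref{remark natural chain map}(iii)).

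There is no real obstacle here, since everything required has already been established: the finiteness of $\dim W$ is used both to guarantee the existence of a system of slices for the ``only if'' direction and to invoke condition~(4) of Proposition~\ref{cosupport key prop} for the ``if'' direction. The mildest subtlety is to remember that the conclusion $\cosupp Z^i\subseteq W$ (termwise) must be read off from the explicit shape of $\tot L^{\mathbb{W}}F$ rather than from a global cosupport statement, but this is essentially bookkeeping.
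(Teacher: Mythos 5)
Your proof is correct and follows the paper's argument exactly: the ``if'' direction is condition (4) of Proposition~\ref{cosupport key prop}, and the ``only if'' direction comes from taking a $K$-flat resolution by flat modules and applying Corollary~\ref{[a,b] general} (whose conclusion, via Remark~\ref{remark natural chain map}(iii), already gives the termwise cosupport containment). The paper states this in one line; your version just spells out the routine step of replacing $X$ by a flat resolution before invoking Corollary~\ref{[a,b] general}.
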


\begin{remark}\label{infinite case}
If $\dim W$ is infinite, it is possible to construct a similar family to systems of slices.
We first put $W_{0}=\max W$. 
Let $i>0$ be an ordinal, and suppose that subsets $W_{j}$ of $W$ are defined for all $j<i$.
Then we put $W_{i}=\max (W\backslash  \bigcup_{j<i}W_{j})$.
In this way, we obtain the smallest ordinal $o(W)$ satisfying the following conditions: (1) $W=\bigcup_{0 \leq i<o(W)} W_i$; (2) $W_i\cap W_j=\emptyset$ if $i\neq j$; (3) $\dim W_i\leq 0$ for $0\leq i<o(W)$; (4) $W_i$ is specialization-closed in $\bigcup_{i\leq j<o(W)}W_j$ for each $0\leq i<o(W)$. 

One should remark that the ordinal $o(W)$ can be uncountable in general, see \cite[p. 48, Theorem 9.8]{GR}.
However, if $R$ is an infinite dimensional commutative Noetherian ring given by Nagata \cite[Appendix A1; Example 1]{Nag}, then $o(W)$ is at most countable.
Moreover, using transfinite induction, it is possible to extend the condition (4) in Proposition \ref{cosupport key prop} and Corollary  \ref{compute lambda} to the case where $o(W)$ is countable. 
One can also extend 
Corollary \ref{if and only if} to the case where $o(W)$ is countable.
\end{remark}\medskip

Using Theorem \ref{gamma pushout} and results in \cite[\S 3]{NY}, it is possible to give a similar result to Corollary \ref{cor main result 2}, for colocalization functors $\gamma_W$ and complexes of injective $R$-modules.\medskip

\section{\v{C}ech complexes and complexes of finitely generated modules}
Let $W$ be a subset of $\Spec R$ and 
$\mathbb{W}=\{W_i\}_{0\leq i\leq n}$ be a system of slices of $W$.
In this section, we prove that $\lambda^WY$ is isomorphic to $\tot L^{\mathbb{W}}Y$ if $Y$ is a complex of finitely generated $R$-modules.\medskip

We denote by $\cD_{\rm fg}$ the full subcategory of $\cD$ consisting of all complexes with finitely generated cohomology modules, and  set $\cD^{-}_{\rm fg}=\cD^{-}\cap \cD_{\rm fg}$. 
We first prove the following proposition.

\begin{proposition}\label{commute with tensor}
Let $W$ be a subset of $\Spec R$ such that $\dim W$ is finite.
Let $X, Y\in \cD$. We suppose that one of the following conditions holds;
\begin{enumerate}[label={\rm(\arabic*)}]
\item $X\in \cD^{-}$ and $Y\in \cD^{-}_{\rm fg}$; 
\item $X$ is a bounded complex of flat $R$-modules and $Y\in \cD_{\rm fg}$.
\end{enumerate}
Then there are natural isomorphisms 
$$(\gamma_{W^c}X) \Lotimes Y\cong \gamma_{W^c}(X \Lotimes Y), \ \  (\lambda^{W}X)\Lotimes Y\cong \lambda^{W}(X\Lotimes Y).$$ 
\end{proposition}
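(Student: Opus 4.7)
The plan is to apply the uniqueness part of Lemma \ref{basic lemma} to the triangle
$$(\gamma_{W^c}X)\Lotimes Y \longrightarrow X\Lotimes Y \longrightarrow (\lambda^W X)\Lotimes Y \longrightarrow (\gamma_{W^c}X)\Lotimes Y[1]$$
obtained by tensoring the fundamental triangle for $X$ with $Y$. Both desired isomorphisms fall out at once provided we verify that $(\gamma_{W^c}X)\Lotimes Y \in \cL_{W^c}$ and $(\lambda^W X)\Lotimes Y \in \cC^W$. The first is immediate from the associativity
$$(Z\Lotimes Y)\Lotimes_R \kappa(\fp) \cong (Z\Lotimes_R\kappa(\fp))\Lotimes_{\kappa(\fp)}(Y\Lotimes_R\kappa(\fp)),$$
which shows $\supp(Z\Lotimes Y)\subseteq \supp Z$; applying this to $Z=\gamma_{W^c}X$ gives the claim.

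The real content is in the second inclusion. Fix a system of slices $\mathbb{W}=\{W_i\}_{0\leq i\leq n}$ of $W$, available because $\dim W<\infty$. In case (2) set $F=X$; in case (1) replace $X$ by a bounded-above K-flat resolution $F\to X$ consisting of flat modules. By Corollary \ref{cor main result 2}, $\lambda^WX\cong\tot L^{\mathbb{W}}F$; since this is a K-flat complex of (cotorsion) flat modules, the derived tensor with $Y$ collapses to the ordinary one, giving $(\lambda^WX)\Lotimes Y\cong(\tot L^{\mathbb{W}}F)\otimes_R Z$, where $Z$ is a representative of $Y$ chosen to be a complex of finitely generated $R$-modules (standard in case (1); available in case (2) over a Noetherian ring).

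The core computational input is the natural isomorphism
$$\bar{\lambda}^{(i_m,\ldots,i_0)}(F')\otimes_R M \;\cong\; \bar{\lambda}^{(i_m,\ldots,i_0)}(F'\otimes_R M)$$
for every flat $R$-module $F'$ and every finitely generated $R$-module $M$, proved by induction on the length of the composition. The one-step case $\bar{\lambda}^{W_i}(F')\otimes_R M\cong\bar{\lambda}^{W_i}(F'\otimes_R M)$ combines two facts: $(-)\otimes_R M$ commutes with direct products because $M$ is finitely presented over Noetherian $R$, and Lemma \ref{tensor representation} gives $\Lambda^{V(\fp)}(F')\otimes_R M\cong\Lambda^{V(\fp)}(F'\otimes_R M)$. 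The induction is fueled by Proposition \ref{flatness}, which keeps the intermediate modules flat. Applying this term by term yields an isomorphism of complexes
$$(\tot L^{\mathbb{W}}F)\otimes_R Z \;\cong\; \tot L^{\mathbb{W}}(F\otimes_R Z).$$

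It remains to show $\tot L^{\mathbb{W}}(F\otimes Z)\in\cC^W$. Each of its terms is of the form $\bar{\lambda}^{(i_m,\ldots,i_0)}(F^j)\otimes Z^k$, a module built from iterated applications of $\prod_{\fp\in W_i}\Lambda^{V(\fp)}((-)\otimes_R R_\fp)$; the result is a direct product of $R^\wedge_\fp$-modules (over $\fp\in W_{i_m}$), which have cosupport in $W_{i_m}\subseteq W$, and so belongs to $\cC^W$ since $\cC^W$ is closed under direct products. In case (1) the total complex is bounded above, and Lemma \ref{basic truncation lemma} concludes. In case (2), $Z$ may be unbounded; here one writes $Z\cong\holim_n\tau_{\leq n}Z$, uses that $(\tot L^{\mathbb{W}}F)\otimes(-)$ commutes with this homotopy limit because $\tot L^{\mathbb{W}}F$ is K-flat and bounded in the \v{C}ech direction, and concludes from the bounded case using that $\cC^W$ is colocalizing. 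I expect this last convergence step in case (2) to be the principal technical obstacle, with the finiteness of $\dim W$ (bounding the length of $L^{\mathbb{W}}$) playing the decisive role.
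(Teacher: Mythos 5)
Your treatment of the $\gamma_{W^c}$ side and of case (1) tracks the paper's proof: represent $\lambda^W X$ by $\tot L^{\mathbb{W}}F$ (a right-bounded complex of cotorsion flat modules with cosupports in $W$), tensor with a right-bounded complex of \emph{finite free} modules representing $Y$, and invoke Proposition~\ref{cosupport key prop}(4). One small correction there: Lemma~\ref{basic truncation lemma} is the wrong tool for a right-bounded complex (for such $X$ the truncations $\tau_{\le n}X$ stabilise to $X$ itself); what actually closes case (1) is Proposition~\ref{cosupport key prop}, which is why you must keep $Y$ represented by finite \emph{free} modules so that the tensored terms stay cotorsion \emph{flat}.

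Case (2) is where your route genuinely diverges from the paper's, and it contains the real gap. You assert that an arbitrary $Y\in\cD_{\rm fg}$ (unbounded, with finitely generated cohomologies) can be represented by a complex of finitely generated $R$-modules, calling this ``available over a Noetherian ring.'' This is a nontrivial claim for unbounded complexes; the standard representability statements are for $\cD^-_{\rm fg}$ (by finite free complexes) and $\cD^b_{\rm fg}$, and no justification is offered for the two-sided unbounded case. The paper avoids the issue entirely: it reduces to a single cotorsion flat module $Z$ with $\cosupp Z\subseteq W$, uses the \emph{cohomological} truncations $\sigma_{\le n}Y$ (which automatically lie in $\cD^-_{\rm fg}$, so case (1) applies directly), and then kills the remaining contribution of $\sigma_{>n}Y$ by a degree-shift computation with a nonpositively graded projective resolution of $\kappa(\fp)$, letting $n\to\infty$ for each fixed cohomological degree. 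Additionally, your intermediate claim that each term $\bar{\lambda}^{(i_m,\dots,i_0)}(F^j)\otimes Z^k$ lies in $\cC^W$ ``because it is a direct product of $R^\wedge_\fp$-modules'' is unjustified: an arbitrary $\widehat{R_\fp}$-module need not have cosupport contained in $\{\fp\}$, and for a non-flat $F^j\otimes Z^k$ the naive completion $\Lambda^{V(\fp)}$ need not agree with $\LLambda^{V(\fp)}$, so the cosupport bound is precisely what has to be proven (it is, in fact, the content of the proposition applied to a single flat module and a single finitely generated module); the paper sidesteps this as well by never tensoring a cotorsion flat module with anything other than a finite free module until the degree-shift step.
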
\medskip

For $X\in \cD$ and $n\in \mathbb{Z}$, we define the cohomological truncations $\sigma_{\leq n}X$ and $\sigma_{> n}X$ as follows (cf. \cite[Chapter I; \S 7]{Ha}):
\begin{align*}\sigma_{\leq n}X&=(\cdots \to X^{n-2}\to X^{n-1}\to \Ker d^{n}_X\to 0\to\cdots )\\
\sigma_{> n}X&=(\cdots\to 0\to \Image d_X^{n}\to X^{n+1}\to X^{n+2}\to\cdots )
\end{align*}

\noindent{\it Proof of Proposition \ref{commute with tensor}.}
Apply $(-)\Lotimes Y$ to the triangle
$\gamma_{W^c}X\to X\to \lambda^WX\to  \gamma_{W^c}X[1]$, and we obtain the following triangle;
$$
\begin{CD}
(\gamma_{W^c}X) \Lotimes Y@>>> X\Lotimes Y@>>> (\lambda^{W}X)\Lotimes Y@>>> (\gamma_{W^c}X) \Lotimes Y[1].
\end{CD}$$
Since $\supp \gamma_{W^c}X \subseteq W^c$, we have $\supp (\gamma_{W^c}X) \Lotimes Y\subseteq W^c$, that is, $(\gamma_{W^c}X) \Lotimes Y\in \cL_{W^c}$. 
Hence it remains to show that $(\lambda^{W}X)\Lotimes Y\in \cC^W$, see Lemma \ref{basic lemma}.

Case (1): 
We remark that $X$ is isomorphic to a right bounded complex of flat $R$-modules.
Then it is seen from Corollary \ref{cor main result 2} that $\lambda^WX$ is isomorphic to a right bounded complex $Z$ of cotorsion flat $R$-modules such that $\cosupp Z^i\subseteq W$ for all $i\in \mathbb{Z}$. Furthermore, $Y$ is isomorphic to a right bounded complex $P$ of finite free $R$-modules. 
Hence it follows that 
$X\Lotimes Y\cong Z\otimes_RP$, where the second one consists of cotorsion flat $R$-modules with cosupports in $W$.
Then we have $X\Lotimes Y\cong Z\otimes_RP\in \cC^W$ by Proposition \ref{cosupport key prop}.

Case $(2)$:
By Corollary \ref{cor main result 2}, $\lambda^{W}X$ is isomorphic to a bounded complex consisting of cotorsion flat $R$-modules with cosupports in $W$.
Thus it is enough to prove that $Z\otimes_R Y\in \cC^W$ for a cotorsion flat $R$-module $Z$ with $\cosupp Z\subseteq W$.

We consider the triangle 
$\sigma_{\leq n}Y\to Y\to \sigma_{>n}Y\to \sigma_{\leq n}Y[1]$ for an integer $n$.
Applying $Z\otimes_R(-)$ to this triangle, we obtain the following one;
$$\begin{CD}
Z\otimes_R\sigma_{\leq n}Y@>>>Z\otimes_RY@>>>Z\otimes_R\sigma_{>n}Y@>>>Z\otimes_R\sigma_{\leq n}Y[1].
\end{CD}$$
Let $\fp\in W^c$.
The case (1) implies that  
$Z\otimes_R \sigma_{\leq n}Y\in \cC^{W}$ for any $n\in \mathbb{Z}$, since $\lambda^WZ\cong Z$. 
Thus, applying $\RHom_R(\kappa(\fp),-)$ to the triangle above, 
we have
$$\RHom_R(\kappa(\fp), Z\otimes_RY)\cong \RHom_R(\kappa(\fp), Z\otimes_R\sigma_{>n}Y).$$
Furthermore, taking a projective resolution $P$ of $\kappa(\fp)$,
we have
$$\RHom_R(\kappa(\fp), Z\otimes_R\sigma_{>n}Y)\cong \Hom_R(P, Z\otimes_R\sigma_{>n}Y).$$
Let $j$ be any integer. 
To see that $\RHom_R(\kappa(\fp), Z\otimes_RY)=0$, it suffices to show that there exists an integer $n$ 
such that $H^0(\Hom_R(P[j], Z\otimes_R\sigma_{>n}Y))=0$.
Note that $P^i=0$ for $i>0$.
Moreover, each element of $H^0(\Hom_R(P[j], Z\otimes_R\sigma_{>n}Y))\cong \Hom_{\cD}(P[j], Z\otimes_R\sigma_{>n}Y)$ is represented by a chain map $P[j]\to Z\otimes_R\sigma_{>n}Y$.
Therefore it holds that $H^0(\Hom_R(P[j], Z\otimes_R\sigma_{>n}Y))=0$ if $n>-j$.
\qed

\begin{remark}
(i) In the proposition, we can remove the finiteness condition on $\dim W$ if $W=V(\fa)$ for an ideal $\fa$.
In such case, we need only use $\fa$-adic completions of free $R$-modules instead of cotorsion flat $R$-modules.

(ii) If $W$ is a generalization-closed subset of $\Spec R$, then the isomorphisms in the proposition hold for any $X, Y\in \cD$ because $\gamma_{W^c}$ is isomorphic to $\RGamma_{W^c}$.
\end{remark}\vspace{2mm}

Let $W$ be a subset of $\Spec R$ and $\mathbb{W}=\{W_i\}_{0\leq i\leq n}$ be a system of slices of $W$.
Let $Y\in \cD_{\rm fg}$. 
By Proposition \ref{commute with tensor} and Proposition \ref{main result 2}, we have
\begin{align}
\lambda^{W}Y\cong (\lambda^WR)\Lotimes Y\cong (L^{\mathbb{W}}R)\otimes_RY.
\label{tensor Cech computation}
\end{align}

Let $F$ be a flat $R$-module and $M$ be a finitely generated $R$-module. Then we see from Lemma \ref{tensor representation} that 
$$(\bar{\lambda}^{W_i}F)\otimes_RM\cong \bar{\lambda}^{W_i}(F\otimes_RM).$$
This fact ensures that 
$(\bar{\lambda}^{(i_m,\ldots i_1, i_0)}R)\otimes_RM\cong \bar{\lambda}^{(i_m,\ldots i_1, i_0)}M$.
Thus, if $Y$ is a complex of finitely generated $R$-modules, then there is a natural isomorphism 
\begin{align}
(L^{\mathbb{W}}R)\otimes_RY\cong \tot L^{\mathbb{W}}Y
\label{tot formula}
\end{align}
in $C(\Mod R)$.
By (\ref{tensor Cech computation}) and (\ref{tot formula}), we have shown the following proposition.

\begin{proposition}\label{finitely generated cor 2}
Let $W$ be a subset of $\Spec R$ and $\mathbb{W}=\{W_i\}_{0\leq i\leq n}$ be a system of slices of $W$.
Let $Y$ be a complex of finitely generated $R$-modules.
Then there is an isomorphism in $\cD$;
$$\lambda^WY\cong\tot L^{\mathbb{W}}Y.$$
Under this identification, $\ell^{\mathbb{W}}Y:Y\to\tot L^{\mathbb{W}}Y$ coincides with $\eta^{W}Y:Y\to \lambda^WY$ in $\cD$.
\end{proposition}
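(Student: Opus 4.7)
The plan is to reduce to the case $Y=R$ already handled in Proposition \ref{main result 2}, by combining the tensor-commutativity of $\lambda^W$ against finitely generated input (Proposition \ref{commute with tensor}) with the compatibility of each $\bar{\lambda}^{W_i}$ with tensoring by a finitely generated module (Lemma \ref{tensor representation}). Note first that, since $R$ is Noetherian and $Y$ is a complex of finitely generated modules, all cohomology modules of $Y$ are finitely generated, so $Y\in \cD_{\rm fg}$. Because $R$ (viewed as a one-term complex) is a bounded complex of flat $R$-modules, and $\dim W$ is finite since $\mathbb{W}$ exists, Proposition \ref{commute with tensor}(2) yields a natural isomorphism
$$
\lambda^W Y \;\cong\; \lambda^W(R\Lotimes Y) \;\cong\; (\lambda^W R)\Lotimes Y.
$$
By Proposition \ref{main result 2}, $\lambda^W R \cong L^{\mathbb{W}}R$ in $\cD$, and by Remark \ref{remark natural chain map}(iii) the latter is a bounded complex of flat modules, so the derived tensor product may be replaced by an ordinary tensor product of complexes: $(\lambda^W R)\Lotimes Y \cong (L^{\mathbb{W}}R)\otimes_R Y$.

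Next I would build a natural isomorphism of double complexes $(L^{\mathbb{W}}R)\otimes_R Y \cong L^{\mathbb{W}}Y$, which then totalizes to $\tot L^{\mathbb{W}}Y$. Each term of $L^{\mathbb{W}}R$ is a product of functors $\bar{\lambda}^{(i_m,\dots,i_0)}R$, so by iterated application of Lemma \ref{tensor representation} (together with the trivial identification $R_\fp\otimes_R M\cong M_\fp$), tensoring with any finitely generated $R$-module $M$ commutes with each $\bar{\lambda}^{(i_m,\dots,i_0)}$:
$$
(\bar{\lambda}^{(i_m,\dots,i_0)}R)\otimes_R M \;\cong\; \bar{\lambda}^{(i_m,\dots,i_0)}M,
$$
naturally in $M$. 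Applied termwise to $Y$ and assembled by the signed face maps from Notation \ref{Cech notation}, these isomorphisms combine into the required isomorphism $(L^{\mathbb{W}}R)\otimes_R Y \cong \tot L^{\mathbb{W}}Y$ in $C(\Mod R)$. Chaining the three steps gives $\lambda^W Y \cong \tot L^{\mathbb{W}}Y$ in $\cD$.

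For the second assertion, I would trace naturality: the morphism $\eta^W Y$ corresponds under the isomorphism of Proposition \ref{commute with tensor}(2) to $(\eta^W R)\Lotimes \id_Y$, because $\eta^W$ is a morphism of functors and $R\Lotimes Y\cong Y$. Proposition \ref{main result 2} identifies $\eta^W R$ with $\ell^{\mathbb{W}}R$, and since the degree-zero component of $\ell^{\mathbb{W}}(-)$ is built from the natural transformations $\bar{\eta}^{W_i}$, the compatibility isomorphism from the previous paragraph sends $\ell^{\mathbb{W}}R\otimes_R Y$ to $\ell^{\mathbb{W}}Y$. I expect the main technical point to be verifying this last naturality carefully enough to conclude that the two chain maps really agree in $\cD$; this amounts to checking that the compatibility isomorphism in Lemma \ref{tensor representation} is natural in the module argument in a way that respects each face map $\bar{\lambda}^{(i_m,\dots,\widehat{i_s},\dots,i_0)}\bar{\eta}^{W_{i_s}}\bar{\lambda}^{(\cdots)}$, which is routine but the only place where one needs to unwind the definitions.
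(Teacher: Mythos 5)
Your proposal is correct and follows essentially the same route as the paper: use Proposition \ref{commute with tensor} together with Proposition \ref{main result 2} to get $\lambda^W Y\cong(\lambda^W R)\Lotimes Y\cong(L^{\mathbb{W}}R)\otimes_R Y$, then use Lemma \ref{tensor representation} iteratively to identify $(L^{\mathbb{W}}R)\otimes_R Y$ with $\tot L^{\mathbb{W}}Y$ in $C(\Mod R)$. Your additional care in tracing the naturality needed for the identification of $\ell^{\mathbb{W}}Y$ with $\eta^W Y$ is a sound elaboration of what the paper leaves implicit.
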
\medskip

We see from (\ref{tot formula}) and the remark below that it is also possible to give a quick proof of this proposition, provided that $Y$ is a right bounded complex of finitely generated $R$-modules.

\begin{remark}\label{left derived functor K(Mod R)}
Let $W$ be a subset of $\Spec R$ and $\mathbb{W}=\{W_i\}_{0\leq i\leq n}$ be a system of slices of $W$.
We denote by $K(\Mod R)$ the homotopy category of complexes of $R$-modules.
Note that $\tot L^{\mathbb{W}}$ induces a triangulated functor $K(\Mod R)\to K(\Mod R)$, which we also write $\tot L^{\mathbb{W}}$. Then it is seen from Corollary \ref{cor main result 2} that $\lambda^{W}:\cD\to \cD$ is isomorphic to the left derived functor of $\tot L^{\mathbb{W}}:K(\Mod R)\to K(\Mod R)$.
\end{remark}

Let $W$ be a subset of $\Spec R$ such that $n=\dim W$ is finite.
By Proposition \ref{finitely generated cor 2}, if an $R$-module $M$ is finitely generated, then $\lambda^WM\in \cD^{[0,n]}$.
On the other hand, since $\lambda^{V(\fa)}\cong \LLambda^{V(\fa)}$ for an ideal $\fa$, it can happen that $H^i(\lambda^{W}M)\neq 0$ for some $i<0$ when $M$ is not finitely generated, see \cite[Example 5.3]{NY}.

\begin{remark}
Let $n\geq 0$ be an integer. 
Let $\fa_i$ be ideals of $R$ and $S_i$ be multiplicatively closed subsets of $R$ for $0\leq i\leq n$.
In Notation \ref{Cech notation} and Definition \ref{Cech def}, 
one can replace $\bar{\lambda}^{(i)}=\bar{\lambda}^{W_i}$  by $\Lambda^{V(\fa_i)}(-\otimes_RS_i^{-1}R)$, and  construct a kind of \v{C}ech complexes.
For the \v{C}ech complex and $\lambda^W$ with $W=\bigcup_{0\leq i\leq n}(V(\fa_i)\cap U_{S_i})$, it is possible to show similar results to Corollary \ref{cor main result 2} and Proposition \ref{finitely generated cor 2}, provided that one of the following conditions holds: 
(1) $V(\fa_i)\cap U_{S_i}$ is specialization-closed in $\bigcup_{i\leq j\leq n}(V(\fa_{j})\cap U_{S_{j}})$ for each $0\leq i\leq n$;  
(2) $V(\fa_i)\cap U_{S_i}$ is generalization-closed in $\bigcup_{0\leq j\leq i}(V(\fa_{j})\cap U_{S_{j}})$ for each $0\leq i\leq n$.
\end{remark}

\section{\v{C}ech complexes and complexes of pure-injective modules}

In this section, as an application, we give a functorial way to construct a quasi-isomorphism from a complex of flat $R$-modules or a complex of finitely generated $R$-modules to a complex of pure-injective $R$-modules.\medskip

We start with the following well-known fact.

\begin{lemma}\label{flat and pure-injective}
Let $X$ be a complex of flat $R$-modules and $Y$ be a complex of cotorsion $R$-modules. 
We assume that one of the following conditions holds:
\begin{enumerate}[label={\rm(\arabic*)}]
\item $X$ is a right bounded and $Y$ is left bounded;
\item $X$ is bounded and $\dim R$ is finite.
\end{enumerate}
Then we have $\RHom_R(X,Y)\cong \Hom_R(X,Y)$.
\end{lemma}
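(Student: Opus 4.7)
The plan is to reduce to showing that $\Hom_R(C,Y)$ is acyclic, where $C$ is the mapping cone of a suitable K-projective resolution $\pi\colon P\to X$. Since $P$ is K-projective, $\RHom_R(X,Y)\cong \Hom_R(P,Y)$, and the triangle $P\to X\to C\to P[1]$ yields a triangle
$$\Hom_R(C,Y)\to \Hom_R(X,Y)\to \Hom_R(P,Y)\to \Hom_R(C,Y)[1]$$
in the homotopy category $K(\Mod R)$. Acyclicity of $\Hom_R(C,Y)$ is thus exactly what forces the natural map $\Hom_R(X,Y)\to \RHom_R(X,Y)$ to be a quasi-isomorphism.

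The resolution $P$ is chosen so that $C$ inherits the appropriate boundedness. In case~(1), I would take $P$ right bounded of projectives; the mapping cone $C$ is then a right-bounded acyclic complex whose terms are flat, since both $P^i$ and $X^i$ are flat. In case~(2), I invoke the Gruson--Raynaud theorem (Theorem~\ref{RG}), which gives $\pd F\leq d=\dim R$ for every flat $R$-module $F$. A Cartan--Eilenberg-type construction then produces a bounded projective resolution $P\to X$, and the cone $C$ is a bounded acyclic complex of flat modules.

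The heart of the argument is the following: if $C$ is an acyclic right-bounded complex of flat modules and $M$ is any cotorsion module, then $\Hom_R(C,M)$ is acyclic. The cocycles $Z^i(C)$ are all flat by a dimension-shift starting from the right-hand end, so the short exact sequences $0\to Z^{i-1}\to C^{i-1}\to Z^i\to 0$ consist of flat modules, and the vanishing $\Ext^1_R(Z^i,M)=0$ (the very definition of cotorsion) makes $\Hom_R(-,M)$ exact on each of them. A direct cohomological computation then shows $\Hom_R(C,M)$ is acyclic.

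To promote from a single cotorsion module to the complex $Y$, I would form the double complex $D^{p,q}=\Hom_R(C^{-p},Y^q)$, whose total complex is $\Hom_R(C,Y)$. Each row $D^{\bullet,q}=\Hom_R(C,Y^q)$ is acyclic by the previous step, so the row-filtration spectral sequence vanishes at $E_2$. The one place where the dichotomy of hypotheses is genuinely essential is the convergence of this spectral sequence: in case~(1), $C$ right bounded together with $Y$ left bounded makes the filtration finite in each total degree, while in case~(2) the bounded $C$ alone suffices regardless of $Y$. No individual step is deep; the main bookkeeping hurdle is tracking which boundedness hypothesis is responsible for convergence in each case.
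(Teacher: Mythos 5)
Your proposal follows essentially the same route the paper intends: the paper's one-line proof cites Kashiwara--Schapira's double-complex criterion together with the Gruson--Raynaud theorem, and your argument (reduce to showing $\Hom_R(C,Y)$ acyclic for the cone $C$ of a projective resolution, prove the single-module case from flatness of the cycles and the cotorsion hypothesis, then pass to complexes $Y$ via the double-complex spectral sequence whose convergence is guaranteed by the boundedness dichotomy) is exactly the expansion of that citation, with Gruson--Raynaud entering precisely in case (2) to bound the resolution. One small caution in case (2): the literal Cartan--Eilenberg resolution of $X$ need not be bounded, since it also resolves the cycles, boundaries, and cohomology of $X$, whose projective dimensions Gruson--Raynaud does not control; the correct (and still elementary) justification is that a bounded complex whose terms all have projective dimension $\leq d$ has finite projective dimension as an object of $\cD$, hence admits a bounded projective resolution by induction on the number of nonzero terms -- and, incidentally, your spectral sequence already dies at $E_1$, not merely $E_2$.
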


One can prove this lemma by \cite[Theorem 12.5.4]{KS} and Theorem \ref{RG}.\medskip

Next, we recall the notion of pure-injective modules and resolutions. 
We say that a morphism $f:M\to N$ of $R$-modules is pure if $f\otimes_R L$ is a monomorphism in $\Mod R$ for any $R$-module $L$.
Moreover an $R$-module $P$ is called pure-injective if $\Hom_R(f,P)$ is an epimorphism in $\Mod R$ for any pure morphism $f:M\to N$ of $R$-modules.
Clearly, all injective $R$-modules are pure-injective.
Furthermore, all pure-injective $R$-modules are cotorsion, see \cite[Lemma 5.3.23]{EJ}.

Let $M$ be an $R$-module.
A complex $P$ together with a quasi-isomorphism $M\to P$ is called a pure-injective resolution of $M$ if $P$ consists of pure-injective $R$-modules and $P^i=0$ for $i<0$. 
It is known that any $R$-module has a minimal pure-injective resolution, which is constructed by using pure-injective envelopes, see \cite{E2} and \cite[Example 6.6.5, Definition 8.1.4]{EJ}. Moreover, if $F$ is a flat $R$-module and $P$ is  a pure-injective resolution of $M$, then we have $\RHom_R(F, M)\cong\Hom_R(F,P)$ by Lemma \ref{flat and pure-injective}.\medskip

Now we observe that any cotorsion flat $R$-module is pure-injective.
Consider an $R$-module of the form $(\bigoplus_{B}R_\fp)^\wedge_{\fp}$ with some index set $B$ and a prime ideal $\fp$, which is a cotorsion flat $R$-module. Writing $E_R(R/\fp)$ for the injective hull of $R/\fp$, we have
$$(\bigoplus_{B}R_\fp)^\wedge_{\fp}\cong \Hom_R(E_R(R/\fp),\bigoplus_{B}E_R(R/\fp)),$$
see \cite[Theorem 3.4.1]{EJ}.
It follows from tensor-hom adjunction that $\Hom_R(M, I)$ is pure-injective for any $R$-module $M$ and any injective $R$-module $I$.
Hence $(\bigoplus_{B}R_\fp)^\wedge_{\fp}$ is pure-injective.
Thus any cotorsion flat $R$-module is pure-injective, see Proposition \ref{Enochs theorem}.

There is another example of pure-injective $R$-modules. 
Let $M$ be a finitely generated $R$-module.
Using Five Lemma, we are able to prove an isomorphism 
$$\Hom_R\big(E_R(R/\fp),\bigoplus_{B}E_R(R/\fp)\big)\otimes_RM\cong \Hom_R\big(\Hom_R(M,E_R(R/\fp)),\bigoplus_{B}E_R(R/\fp)\big).$$
Therefore $(\bigoplus_{B}R_\fp)^\wedge_{\fp}\otimes_RM$ is pure-injective; it is also isomorphic to $(\bigoplus_{B}M_\fp)^\wedge_{\fp}$ by Lemma \ref{tensor representation}.
Moreover, Proposition \ref{commute with tensor} implies that $\cosupp (\bigoplus_{B}M_\fp)^\wedge_{\fp}\subseteq \{\fp\}$.
\bigskip

By the above observation, we see that Corollary \ref{[a,b] general}, (\ref{tot formula}) and Proposition \ref{finitely generated cor 2} yield the following theorem, which is one of the main results of this paper.

\begin{theorem}\label{pure-injective resolution}
Let $W$ be a subset of $\Spec R$ and $\mathbb{W}=\{W_i\}_{0\leq i\leq n}$ be a system of slices of $W$.
Let $Z$ be a complex  of flat $R$-modules or a complex of  finitely generated $R$-modules.
We assume that $\cosupp Z\subseteq W$.
Then $\ell^{\mathbb{W}}Z:Z\to \tot L^{\mathbb{W}}Z$ is a quasi-isomorphism, where $\tot L^{\mathbb{W}}Z$ consists of pure-injective $R$-modules with cosupports in $W$. 
\end{theorem}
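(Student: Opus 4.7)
The plan is to treat the two cases of the hypothesis (complex of flat modules, versus complex of finitely generated modules) separately, pulling the quasi-isomorphism from results already in hand and then checking pure-injectivity of each term of $\tot L^{\mathbb{W}}Z$ using the structural observations gathered in this section.

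First, suppose $Z$ is a complex of flat $R$-modules with $\cosupp Z\subseteq W$. Corollary \ref{[a,b] general} already delivers the quasi-isomorphism $\ell^{\mathbb{W}}Z:Z\to \tot L^{\mathbb{W}}Z$ and tells us $\tot L^{\mathbb{W}}Z$ consists of cotorsion flat $R$-modules with cosupports in $W$. So the only remaining point is pure-injectivity of such terms. By Enochs' structure theorem (Proposition \ref{Enochs theorem}) every cotorsion flat $R$-module is a direct product of modules of the form $(\bigoplus_{B}R_\fp)^\wedge_\fp$; the discussion immediately preceding the statement identifies each such module with $\Hom_R(E_R(R/\fp),\bigoplus_{B}E_R(R/\fp))$, which is pure-injective by tensor-hom adjunction. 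A direct product of pure-injective modules is pure-injective, so every term of $\tot L^{\mathbb{W}}Z$ is pure-injective.

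Next, suppose $Z$ is a complex of finitely generated $R$-modules with $\cosupp Z\subseteq W$. Proposition \ref{finitely generated cor 2} gives an isomorphism $\lambda^{W}Z\cong \tot L^{\mathbb{W}}Z$ in $\cD$ under which $\ell^{\mathbb{W}}Z$ is identified with $\eta^{W}Z$. Since $Z\in \cC^{W}$ by assumption, $\eta^{W}Z$ is an isomorphism in $\cD$, hence $\ell^{\mathbb{W}}Z$ is a quasi-isomorphism. For pure-injectivity, use the natural isomorphism (\ref{tot formula}): termwise in the double complex, each entry $\bar{\lambda}^{(i_m,\ldots,i_0)}Z^j$ is isomorphic to $\bar{\lambda}^{(i_m,\ldots,i_0)}R\,\otimes_R Z^j$. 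The first factor is a cotorsion flat $R$-module, so by Corollary \ref{Enochs cor} it is a product of modules $(\bigoplus_{B}R_\fp)^\wedge_\fp$; tensoring with the finitely generated $Z^j$ and using that $(-)\otimes_R Z^j$ commutes with such products (this is where finite presentation is used) together with Lemma \ref{tensor representation}, we obtain a product of modules $(\bigoplus_B Z^j_\fp)^\wedge_\fp$. The paragraph before the theorem records the isomorphism $(\bigoplus_B R_\fp)^\wedge_\fp\otimes_R Z^j\cong \Hom_R(\Hom_R(Z^j,E_R(R/\fp)),\bigoplus_B E_R(R/\fp))$, which is pure-injective by tensor-hom adjunction; hence each term of $\tot L^{\mathbb{W}}Z$ is a product of pure-injectives and therefore pure-injective.

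The only real bookkeeping step is the one in the finitely generated case: verifying that tensoring with a finitely presented (equivalently, finitely generated, since $R$ is Noetherian) module commutes with the products appearing in the Enochs decomposition, so that the pointwise pure-injectivity can be lifted to the whole term. Once this commutation is invoked, the statement reduces to the isomorphisms already established in the preceding discussion and the two cited results Corollary \ref{[a,b] general} and Proposition \ref{finitely generated cor 2}. The inclusion of cosupports in $W$ in both cases is immediate because each elementary factor $\bar{\lambda}^{\{\fp\}}$ produces a module of cosupport contained in $\{\fp\}\subseteq W$, so the product lies in $\cC^{W}$.
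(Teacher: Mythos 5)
Your proof is correct and follows essentially the same route as the paper's, which cites Corollary \ref{[a,b] general}, (\ref{tot formula}), and Proposition \ref{finitely generated cor 2} together with the preceding observations about pure-injectivity; you have simply made explicit the commutation of $(-)\otimes_R Z^j$ with the products appearing in Enochs' decomposition, a step the paper uses implicitly when establishing (\ref{tot formula}).
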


\begin{remark}\label{minimal compare}
Let $N$ be a flat or finitely generated $R$-module.
Suppose that $d=\dim R$ is finite.
Set $W_{i}=\Set{\fp\in \Spec R | \dim R/\fp= i}$ and $\mathbb{W}=\{W_i\}_{0\leq i\leq d}$.
By Theorem \ref{pure-injective resolution}, we obtain a pure-injective resolution $\ell^{\mathbb{W}}N:N\to L^{\mathbb{W}}N$ of $N$, that is, there is an exact sequence of $R$-modules of the following form:
$$0\to N\to \prod_{0\leq i_0\leq d}\bar{\lambda}^{(i_0)}N\to\prod_{0\leq i_0<i_1\leq d}\bar{\lambda}^{(i_1,i_0)}N\to\cdots\to \bar{\lambda}^{(d,\ldots, 0)}N\to 0$$
We remark that,  in $C(\Mod R)$, $L^\mathbb{W}N$ need not be isomorphic to a minimal pure-injective resolution $P$ of $N$.
In fact, when $N$ is a projective or finitely generated $R$-module, it holds that $P^0\cong \prod_{\fm\in W_0}\widehat{N_\fm}= \bar{\lambda}^{(0)}N$ (cf. \cite[Theorem 3]{W} and \cite[Remark 6.7.12]{EJ}), while $(L^\mathbb{W}N)^0=\prod_{0\leq i_0\leq d}\bar{\lambda}^{(i_0)}N$.
Furthermore, Enochs \cite[Theorem 2.1]{E2} proved that if $N$ is flat $R$-module, then $P^i$ is of the form $\prod_{\fp\in W_{\geq i}} T^i_\fp$ for $0\leq i\leq d$ (cf. Notation \ref{Enochs rep}), where $W_{\geq i}=\Set{\fp\in \Spec R | \dim R/\fp\geq i}$.

On the other hand, for a flat or finitely generated $R$-module $N$, the differential maps in the pure-injective resolution $L^{\mathbb{W}}N$ are concretely described.
In addition, our approach based on the localization functor $\lambda^W$ and the \v{C}ech complex $L^{\mathbb{W}}$ provide a natural morphism $\ell^{\mathbb{W}}:\id_{C(\Mod R)}\to \tot L^{\mathbb{W}}$ which induces isomorphisms in $\cD$ for all complexes of flat $R$-modules and complexes of finitely generated $R$-modules.
The reader should also compare Theorem \ref{pure-injective resolution} with \cite[Proposition 5.9]{PT}.
\end{remark}\smallskip

We close this paper with the following example of Theorem \ref{pure-injective resolution}.

\begin{example}
Let $R$ be a 2-dimensional local domain with  quotient field $Q$.
Let $\mathbb{W}=\{W_i\}_{0\leq i\leq 2}$ be as in Remark \ref{minimal compare}.
Then $L^\mathbb{W}R$ is a pure-injective resolution of $R$, and $L^\mathbb{W}R$ is of the following form:
$$
0\to \disp{Q\oplus (\prod_{\fp\in W_1}\widehat{R_\fp})\oplus\widehat{R}}\to (\prod_{\fp \in W_1}\widehat{R_\fp})_{(0)}\oplus (\widehat{R})_{(0)}\oplus\prod_{\fp\in W_1}\widehat{(\widehat{R})_\fp} \to  
(\prod_{\fp\in W_1}\widehat{(\widehat{R})_\fp})_{(0)}
\to 0$$
\end{example}

\bibliographystyle{amsplain}

\end{document}